\newcommand{\hwp}{\mathrm{HWP}}
\newcommand{\comment}[1]{}
\definecolor{teal}{RGB}{0,128,128}
\definecolor{darkpurple}{RGB}{128,0,128}
\definecolor{unbred}{RGB}{204,0,0}
\newtheorem{theorem}{Theorem}[section]
\newtheorem{lemma}[theorem]{Lemma}
\newtheorem{corollary}[theorem]{Corollary}
\theoremstyle{definition}
\def \Z {\mathbb Z}
\title {The Hamilton-Waterloo Problem with even cycle lengths}
\author{A.\ C.\ Burgess \footnotemark[1] 
\and 
P.\ Danziger \footnotemark[2] 
\and
T.\ Traetta \footnotemark[3]
}
\begin{document}

\maketitle

\footnotetext[1]{Department of Mathematics and Statistics, University of New Brunswick, 100 Tucker Park Rd., Saint John, NB,  E2L 4L5, Canada}
\footnotetext[2]{Department of Mathematics, Ryerson University, 350 Victoria St., Toronto, ON,  M5B 2K3, Canada}
\footnotetext[3]{DICATAM, Universit\`{a} degli Studi di Brescia, Via Branze 43, 25123 Brescia, Italy}

\begin{abstract}
The {\em Hamilton-Waterloo Problem} $\hwp(v;m,n;\alpha,\beta)$ asks for a 2-factorization of the complete graph $K_v$ or $K_v-I$, the complete graph with the edges of a 1-factor removed, into $\alpha$ $C_m$-factors and $\beta$ $C_n$-factors, where $3 \leq m < n$.   
In the case that $m$ and $n$ are both even, the problem has been solved except possibly when $1 \in \{\alpha,\beta\}$ or when $\alpha$ and $\beta$ are both odd, in which case necessarily $v \equiv 2 \pmod{4}$.  In this paper, we develop a new construction that creates factorizations with larger cycles from existing factorizations under certain conditions.  This construction enables us to show that there is a solution to $\hwp(v;2m,2n;\alpha,\beta)$ for odd $\alpha$ and $\beta$ whenever the obvious necessary conditions hold, except possibly if $\beta=1$; $\beta=3$ and $\gcd(m,n)=1$; $\alpha=1$; or $v=2mn/\gcd(m,n)$. This result almost completely settles the existence problem for even cycles, other than the possible exceptions noted above.
\end{abstract}

Keywords: 2-Factorizations, (Resolvable) Cycle Decompositions, Hamilton-Waterloo Problem.

\section{Introduction}

We assume that the reader is familiar with the basic concepts of graph theory.  In particular, we use $V(G)$ and $E(G)$ to denote the vertex set and edge set, respectively, of the graph $G$.  
A {\it $k$-factor} of $G$ is a $k$-regular spanning subgraph of $G$.  Thus a {\em 1-factor} of $G$ (also called a {\em perfect matching}) is a collection of independent edges whose end-vertices partition $V(G)$, and a {\em 2-factor} of $G$ is a collection of vertex-disjoint cycles in $G$ whose vertex sets partition $V(G)$.  If the cycles in a given 2-factor all have the same length, we say that the 2-factor is {\em uniform}.  We will use the notation $C_{\ell}$ to denote a cycle of length $\ell$, and refer to a uniform 2-factor whose cycles have length $\ell$ as a {\em $C_{\ell}$-factor}.  

If $G_1, G_2, \ldots, G_r$ are subgraphs of $G$ whose edge sets partition $E(G)$, then we speak of a {\em decomposition} of $G$ into its subgraphs $G_1, \ldots, G_r$, and write $G=G_1 \oplus G_2 \oplus \ldots \oplus G_r$.  In particular, a {\em 2-factorization} of $G$ is a decomposition of $G$ into 2-factors.  If $\mathcal{F}=\{F_1, \ldots, F_t\}$ is a set of 2-factors of $G$, then we refer to a 2-factorization in which every factor is isomorphic to an element of $\mathcal{F}$ as an $\mathcal{F}$-factorization.  If $\mathcal{F}=\{F\}$, then we speak of an $F$-factorization; if, moreover, $F$ is a $C_{\ell}$-factor, then we refer to a $C_{\ell}$-factorization.  

We are particularly interested in 2-factorizations of $K_v$, the complete graph of order $v$.  Note that if $v$ is even, then $K_v$ has no 2-factorization, as its vertices have odd valency.  Thus we define $K_v^*$ to denote $K_v$ if $v$ is odd and $K_v-I$, the complete graph with the edges of a 1-factor $I$ removed, if $v$ is even.  The question of whether $K_v^*$ admits a 2-factorization in which each 2-factor is isomorphic to $F$ is known as the {\em Oberwolfach Problem} $\mathrm{OP}(F)$, and has been the subject of much study.  The Oberwolfach Problem has been solved in the case that $F$ is uniform~\cite{ASSW, Hoffman Schellenberg 91}, bipartite~\cite{BryantDanziger, Haggkvist 85} or contains exactly two components~\cite{Traetta 13}.  The solution of the Oberwolfach Problem for uniform factors will be useful to us later, so we state it here for future reference.
\begin{theorem}[\cite{ASSW, Hoffman Schellenberg 91}] \label{uniform OP}
Let $v, \ell \geq 3$ be integers.  There is a $C_{\ell}$-factorization of $K_v^*$ if and only if $\ell \mid v$ and $(v,\ell) \notin \{(6,3), (12,3)\}$.
\end{theorem}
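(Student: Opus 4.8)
A $C_\ell$-factor is a spanning union of vertex-disjoint $\ell$-cycles, so $\ell\mid v$ is forced. The two putative exceptions I would rule out directly. Since $K_6-I\cong K_{2,2,2}$, a short case check shows that its essentially unique decomposition into four triangles cannot be split into two parallel classes, so $(6,3)$ fails; the case $(12,3)$ is the nonexistence of a nearly Kirkman triple system of order $12$, which I would settle by a finite counting argument on how the triangles through a fixed vertex must distribute among the five required parallel classes.

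\textbf{Reduction to equipartite pieces.} Write $v=\ell t$ and partition the vertex set of $K_v^*$ into $t$ blocks of size $\ell$. This yields a decomposition $K_v^* = K_{t[\ell]}' \oplus (\text{a copy of } K_\ell \text{ or } K_\ell-I \text{ on each block})$, where $K_{t[\ell]}'$ denotes the complete equipartite graph with $t$ parts of size $\ell$, possibly with a $1$-factor removed so that its degree is even, and where the deleted $1$-factor of $K_v$ (if any) is apportioned coherently between the two pieces. The block pieces are handled by Walecki's classical theorem — $K_{2k+1}$ decomposes into $k$ Hamilton cycles and $K_{2k}-I$ into $k-1$ — because taking one Hamilton cycle from each block gives a $C_\ell$-factor. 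So the problem reduces to constructing a $C_\ell$-factorization of the (possibly punctured) complete equipartite graph $K_{t[\ell]}'$; the instances in which this reduction is infeasible — notably $v=2\ell$ with $\ell$ odd, where the would-be equipartite piece $K_{\ell,\ell}-I$ is bipartite and cannot carry odd cycles — must be built as base cases by other means.

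\textbf{The recursive engine.} For $K_{t[\ell]}'$, and for the base cases, I would combine direct and recursive constructions. Directly: place the vertices on $\Z_{\ell t}$ or on $\Z_{\ell(t-1)}\cup\{\infty\}$ and exhibit a starter $C_\ell$-factor together with enough adder factors so that translating by a suitable subgroup produces a full factorization in which every edge-difference is covered the correct number of times. Recursively: inflate a resolvable group-divisible design (equivalently a Kirkman frame) by replacing each point with $\ell$ new points and each block with a prefabricated $C_\ell$-factorization of a smaller complete equipartite graph, then glue the parallel classes of the frame to those of the ingredients. This is the standard GDD/PBD-closure machinery, and it reduces the remaining work to a finite list of small ``prime'' orders, to be dispatched individually by the difference method or by explicit construction, together with the elementary fact that $K_{\ell,\ell}$ has a $C_\ell$-factorization whenever $\ell$ is even.

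\textbf{Main obstacle.} The hard part is twofold. First, when $v$ and $\ell$ are both even the cycle $C_\ell$ is bipartite, so the factors are forced to be balanced across every relevant vertex bipartition; this narrows the direct constructions, forces the deleted $1$-factor to be tracked through every recursion step, and makes the cramped case $v=2\ell$ genuinely tight. Second, the triangle case $\ell=3$ is special — there the equipartite pieces are essentially (nearly) Kirkman triple systems — so determining exactly which small orders exist (which is precisely where the exceptions $(6,3)$ and $(12,3)$ enter) and then verifying that the recursive engine reaches every remaining admissible pair $(v,\ell)$ with no sporadic family left over is where the bulk of the argument lies.
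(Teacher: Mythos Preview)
The paper does not prove this theorem. Theorem~\ref{uniform OP} is stated with citations to \cite{ASSW} (odd~$\ell$) and \cite{Hoffman Schellenberg 91} (even~$\ell$) and is used as a black box throughout; there is no argument in the paper for you to compare against.

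As for the proposal itself: what you have written is a plausible high-level outline of the classical strategy, not a proof. The reduction via Walecki on blocks plus a $C_\ell$-factorization of the equipartite remainder is indeed the backbone of \cite{ASSW}, and the even-$\ell$ case with its bipartite constraints and the tracking of the deleted $1$-factor is essentially what \cite{Hoffman Schellenberg 91} handles. But every substantive step is deferred: ``a short case check'', ``a finite counting argument'', ``exhibit a starter $C_\ell$-factor together with enough adder factors'', ``a finite list of small `prime' orders, to be dispatched individually''. None of these are carried out, and in the actual literature they occupy dozens of pages and several papers (the triangle case alone is the Ray-Chaudhuri--Wilson solution of Kirkman's problem plus separate work on nearly Kirkman triple systems). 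Your sketch also glosses over the awkward parity interactions --- for instance, when $t=2$ and $\ell$ is odd you correctly flag that $K_{\ell,\ell}-I$ cannot carry odd cycles, but ``must be built as base cases by other means'' is not an argument, and this is exactly the regime where \cite{ASSW} works hardest. So the outline is broadly faithful to the literature, but as written it would not be accepted as a proof; since the paper itself treats the result as known, citing \cite{ASSW, Hoffman Schellenberg 91} is the appropriate move here.
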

Complete solutions to the Oberwolfach Problem are also known for certain infinite families of orders~\cite{ABHMS, Bryant Schar 09} and asymptotic solutions can be found in~\cite{DukesLing, GJKKO}; however, the problem is still open in general.  

The Oberwolfach Problem has been extended to finding 2-factorizations of regular graphs other than $K_v^*$, notably certain classes of lexicographic product.  We use $G[n]$ to denote the lexicographic product of the graph $G$ with the empty graph on $n$ vertices, so that $V(G[n]) = V(G) \times \mathbb{Z}_n$, with $(u,x)(v,y) \in E(G[n])$ if and only if $uv \in E(G)$ and $x,y \in \mathbb{Z}_n$.  Of particular note, $K_m[n]$ is the complete equipartite graph with $m$ parts of size $n$; the existence of uniform 2-factorizations of $K_m[n]$ was settled by Liu~\cite{Liu00, Liu03}.
\begin{theorem}[\cite{Liu00,Liu03}] \label{liu}
Let $\ell, m, n$ be positive integers with $\ell \geq 3$.  There is a $C_{\ell}$-factorization of $K_{m}[n]$ if and only if the following conditions are all satisfied:
\begin{enumerate}
\item $\ell \mid mn$;
\item $(m-1)n$ is even;
\item if $m=2$, then $\ell$ is even;
\item $(\ell,m,n) \notin \{(3,3,2), (3,6,2), (3,3,6), (6,2,6)\}$.
\end{enumerate}
\end{theorem}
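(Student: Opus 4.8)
\emph{Proof idea.} I would prove necessity first and then assemble sufficiency from a small set of ``engine'' constructions together with two recursive mechanisms.

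\emph{Necessity} is immediate apart from the sporadic list: $K_m[n]$ has $mn$ vertices and regularity $(m-1)n$, so a 2-factorization needs $(m-1)n$ even, a $C_\ell$-factor needs $\ell\mid mn$, and $K_2[n]=K_{n,n}$ being bipartite forces $\ell$ even when $m=2$. For each of $(3,3,2),(3,6,2),(3,3,6),(6,2,6)$ I would give a direct non-existence argument --- for instance $K_3[2]$ is the octahedron, and one checks by inspection that no two of its four triangle-factors are edge-disjoint, so it has no $C_3$-factorization; the other three triples admit similar short counting or exhaustive arguments (in fact $K_3[n]$ has no $C_3$-factorization exactly for $n\in\{2,6\}$, and $K_{6,6}$ has no $C_6$-factorization).

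\emph{Sufficiency.} The plan is to write $K_m$ as an edge-disjoint union of Hamilton cycles --- together with one perfect matching when $m$ is even, in which case condition~(2) forces $n$ even --- and to blow this decomposition up by $[n]$. This expresses $K_m[n]$ as an edge-disjoint union of ``thick cycles'' $C_m[n]$ plus, when $m$ is even, a spanning subgraph equal to a disjoint union of $m/2$ copies of $K_{n,n}$. The heart of the argument is then a $C_\ell$-factorization of the thick cycle $C_m[n]$, which is only $2n$-regular and hence comparatively flexible: I would produce these by the difference method in $\mathbb{Z}_m\times\mathbb{Z}_n$ and by induction on $n$ via the identity $C_m[ab]=(C_m[b])[a]$, refining a $C_k$-factorization of $C_m[b]$ through a $C_\ell$-factorization of each thick cycle $C_k[a]$. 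Combined with the known $C_\ell$-factorization of $K_{n,n}$ (valid whenever $n$ and $\ell$ are even, $\ell\mid 2n$, and $(\ell,n)\ne(6,6)$), this yields a $C_\ell$-factorization of $K_m[n]$ whenever $\ell\mid 2n$. To reach the remaining admissible $\ell$ I would add a Wilson/PBD-type filling step --- from $C_\ell$-factorizations of $K_{m'}[n]$ for $m'$ in a small ingredient set $S$ and a resolvable design on $m$ points with block sizes in $S$, overlay the per-block factorizations resolution class by resolution class --- with the Hamilton case $\ell=mn$ (Laskar--Auerbach, Hetyei) and a finite list of small direct or computer-found constructions anchoring the recursion.

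\emph{Main obstacle.} The real difficulty is not a single crux lemma but the case analysis and the boundary behaviour. When $\ell\nmid 2n$ the clean ``thick cycles plus $K_{n,n}$'s'' split breaks down, since the leftover matching of $K_m$ cannot absorb a $C_\ell$-factor; such cases need a different decomposition of the index graph or a bespoke construction, and these must be organised carefully by the parities of $\ell,m,n$ and by $\gcd(\ell,n)$. Equally delicate is checking that every branch of the recursion terminates on an admissible ingredient and never on one of the four forbidden triples, nor on a small configuration for which the required thick-cycle factorization, Hamilton decomposition, or resolvable design happens not to exist --- in other words, proving that the list of exceptions is exactly $\{(3,3,2),(3,6,2),(3,3,6),(6,2,6)\}$. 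That bookkeeping, rather than any one construction, is where the bulk of the work lies.
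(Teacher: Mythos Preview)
This theorem is quoted from Liu's papers \cite{Liu00,Liu03} and is not proved in the present paper; it is simply stated as a known result and then used as a tool. Consequently there is no ``paper's own proof'' to compare your proposal against.

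For what it is worth, your sketch is broadly in the spirit of Liu's actual arguments: blow up a Hamilton decomposition of $K_m$ (plus a $1$-factor when $m$ is even) to reduce to $C_\ell$-factorizations of $C_m[n]$ and of $K_{n,n}$, build the thick-cycle factorizations by difference methods, and glue via recursive/resolvable-design machinery, with the four exceptional triples handled by ad hoc non-existence arguments. You are also right that the substance lies in the case analysis and in showing the recursion never lands on a forbidden or undefined ingredient. If you want a genuine comparison, you would need to consult \cite{Liu00,Liu03} directly rather than this paper.
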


A related question is the \emph{Hamilton-Waterloo Problem} $\mathrm{HWP}(G;F_1,F_2;\alpha,\beta)$.  Here, we seek a 2-factorization of the graph $G$ in which $\alpha$ 2-factors are isomorphic to $F_1$ and $\beta$ 2-factors are isomorphic to $F_2$.  In the case that $G=K_v^*$, we denote this problem by $\mathrm{HWP}(v;F_1,F_2;\alpha,\beta)$, while if $F_1$ and $F_2$ are uniform 2-factors, say $F_1$ is a $C_m$-factor and $F_2$ is a $C_n$-factor, we use the notation $\mathrm{HWP}(G;m,n;\alpha,\beta)$.  Thus, $\mathrm{HWP}(v;m,n;\alpha,\beta)$ asks whether $K_v^*$ has a 2-factorization into $\alpha$ $C_m$-factors and $\beta$ $C_n$-factors.  {We have the following obvious necessary conditions.  

\begin{theorem} \label{Necessary}
Let $G$ be a $2r$-regular graph, and let $F_1$ and $F_2$ be 2-factors of $G$.  If there is a solution to $\mathrm{HWP}(G;F_1, F_2; \alpha,\beta)$, then $\alpha, \beta \geq 0$ and $\alpha+\beta = r$.  In particular, there can be a solution to $\mathrm{HWP}(G;m,n;\alpha,\beta)$ only if $m$ and $n$ both divide $|V(G)|$, $\alpha, \beta \geq 0$ and $\alpha+\beta = r$.
\end{theorem}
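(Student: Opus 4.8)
The proof is a one-line regularity/counting argument, so the plan is short. First I would recall that a 2-factorization of $G$ is by definition a decomposition $G = H_1 \oplus \dots \oplus H_t$ in which every $H_i$ is a 2-factor, hence $2$-regular. Fixing any vertex $x \in V(G)$ and summing valencies across the decomposition gives $2r = \deg_G(x) = \sum_{i=1}^{t} \deg_{H_i}(x) = 2t$, so $t = r$; that is, every 2-factorization of a $2r$-regular graph consists of exactly $r$ factors.

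Next I would apply this to a putative solution of $\hwp(G; F_1, F_2; \alpha, \beta)$. By definition such a solution is a 2-factorization in which $\alpha$ of the factors are isomorphic to $F_1$ and $\beta$ are isomorphic to $F_2$; since these are the only factor types and (in the Hamilton--Waterloo setting) $F_1 \not\cong F_2$, the two classes partition the $r$ factors, whence $\alpha + \beta = r$. That $\alpha, \beta \ge 0$ is immediate, being cardinalities of sets of factors.

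For the ``in particular'' statement, specialise to $G = K_v^\ast$, so that $|V(G)| = v$, with $F_1$ a $C_m$-factor and $F_2$ a $C_n$-factor. A $C_m$-factor is a spanning subgraph of $G$ that is a vertex-disjoint union of cycles of length $m$, so its existence forces $m \mid v$; symmetrically $n \mid v$. Combined with $\alpha + \beta = r$ and $\alpha, \beta \ge 0$ from the previous paragraph, this is exactly the asserted necessary condition. There is no genuine obstacle here; the only points that warrant a word of care are that the count $t = r$ uses nothing about $G$ beyond $2r$-regularity, and that deducing $m \mid v$ (resp.\ $n \mid v$) presupposes that at least one $C_m$-factor (resp.\ $C_n$-factor) actually occurs, i.e.\ the implicit convention $\alpha, \beta \ge 1$ under which the problem is posed.
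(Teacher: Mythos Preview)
Your argument is correct and is exactly the standard justification; the paper itself gives no proof at all, merely introducing the theorem with the phrase ``We have the following obvious necessary conditions'' and moving on. So there is nothing to compare against---you have simply supplied the one-paragraph verification the authors omitted.

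Two small remarks. First, in the ``in particular'' clause the theorem is stated for an arbitrary $2r$-regular graph $G$, not only for $K_v^\ast$; your divisibility argument goes through unchanged (a $C_m$-factor of $G$ is a spanning disjoint union of $m$-cycles, hence $m\mid |V(G)|$), so there is no need to specialise. Second, your caveat about needing $\alpha,\beta\ge 1$ to deduce $m\mid |V(G)|$ and $n\mid |V(G)|$ is well taken, but note that the first sentence of the theorem already hypothesises that $F_1$ and $F_2$ \emph{are} $2$-factors of $G$; reading the uniform case under the same convention (that a $C_m$-factor and a $C_n$-factor of $G$ exist) makes the divisibility automatic regardless of whether $\alpha$ or $\beta$ vanishes.
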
}

Note that when one of $\alpha$ or $\beta$ is 0, or when $m=n$,  $\mathrm{HWP}(v;m,n;\alpha,\beta)$ is equivalent to an instance of the uniform Oberwolfach Problem, so we will generally assume that $\alpha$ and $\beta$ are positive.  In addition, when considering $\hwp(G;m,n;\alpha,\beta)$, we will generally assume without loss of generality that $m < n$.

The Hamilton-Waterloo Problem $\hwp(v;m,n;\alpha,\beta)$ has been the subject of much recent study; see, for instance, the following papers, which have all appeared since 2013~\cite{AsplundEtAl, BonviciniBuratti, BryantDanzigerDean, BurattiDanziger, BDT3, BDT2, BDT1, KeranenOzkan, KeranenPastine, MerolaTraetta, OdabasiOzkan, WangCao, WangChenCao, WangLuCao}.  An asymptotic existence result is given in~\cite{GJKKO}. In the case that $m$, $n$ and $v$ are all odd, the current authors have solved $\hwp(v;m,n;\alpha,\beta)$ (recalling that $m < n$) except possibly if $\alpha=1$, $\beta \in \{1,3\}$ or $v=mn/\gcd(m,n)$~\cite{BDT2}.  When $m$ and $n$ have opposite parities, less is known.  The paper~\cite{BDT3} solves this problem when $m \mid n$, $v>6n>36m$ and $\beta \geq 3$; further results for cycle lengths of opposite parities can be found in~\cite{KeranenPastine2}.  The case $(m,n)=(3,4)$ is completely solved~\cite{BonviciniBuratti, DanzigerQuattrocchiStevens, OdabasiOzkan, WangChenCao}.  Other cases which have been considered include $(m,n) \in \{(3,v), (3,3s), (4,n), (8,n)\}$~\cite{AsplundEtAl, KeranenOzkan, LeiShen, OdabasiOzkan, WangCao}.

In this paper, we consider the Hamilton-Waterloo Problem $\mathrm{HWP}(v;m,n;\alpha,\beta)$ for even $m$ and $n$.  More generally, factorization into bipartite factors has been considered in~\cite{BryantDanziger, BryantDanzigerDean, Haggkvist 85}.

\begin{theorem}[\cite{BryantDanziger, Haggkvist 85}] \label{known results}
Let $v$ be a positive even integer and let $F_1$ and $F_2$ be bipartite 2-regular graphs of order $v$.  
\begin{enumerate}
\item If $v \equiv 0 \pmod{4}$, then there is a solution to $\hwp(v;F_1,F_2;\alpha,\beta)$ if and only if $\alpha+\beta=  \frac{v-2}{2}$, except possibly if $\alpha=1$ or $\beta=1$.
\item If $v \equiv 2 \pmod{4}$, then there is a solution to $\hwp(v;F_1,F_2;\alpha,\beta)$ whenever $\alpha+\beta= \frac{v-2}{2}$ and, in addition, $\alpha$ and $\beta$ are both even.
\end{enumerate}
\end{theorem}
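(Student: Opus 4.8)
My plan is to get necessity from the degree count and then build the factorizations via one reduction. Necessity of $\alpha,\beta\ge 0$ and $\alpha+\beta=(v-2)/2$ is immediate from Theorem~\ref{Necessary}, as $K_v^*=K_v-I$ is $(v-2)$-regular; the substance of the statement is that, $F_1$ and $F_2$ being bipartite, nothing more is forced. For sufficiency I would use the isomorphism $K_v-I\cong K_t[2]$ with $t=v/2$: using the deleted $1$-factor $I$ to partition $V(K_v)$ into $v/2$ pairs, two vertices are adjacent in $K_v-I$ exactly when they lie in different pairs. Then $\hwp(v;F_1,F_2;\alpha,\beta)$ amounts to a $2$-factorization of $K_t[2]$ into $\alpha$ copies of $F_1$ and $\beta$ copies of $F_2$, each $2$-factor of which is automatically bipartite.

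The engine is a H\"aggkvist-type lemma: for every $2$-regular bipartite graph $B$ of order $2s$, the $4$-regular graph $C_s[2]$ has a $B$-factorization, and, for a $1$-factor $M$ on the same $s$ vertices, the $6$-regular graph $(C_s\oplus M)[2]=C_s[2]\oplus M[2]$ (here $M[2]$ is a $C_4$-factor) has a $B$-factorization, i.e.\ decomposes into three copies of $B$. Granting this, I would proceed by cases. If $t$ is odd (so $v\equiv 2\pmod 4$ and $\alpha+\beta=t-1$ is even), take a Hamilton decomposition $K_t=H_1\oplus\cdots\oplus H_{(t-1)/2}$; then $K_t[2]=H_1[2]\oplus\cdots\oplus H_{(t-1)/2}[2]$ with each $H_i[2]\cong C_t[2]$, and I turn $\alpha/2$ of these blocks into copies of $F_1$ and the other $\beta/2$ into copies of $F_2$, which needs $\alpha,\beta$ even --- precisely the hypothesis of part~2. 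If $t$ is even ($v\equiv 0\pmod 4$), take a Hamilton decomposition of $K_t-M$ together with the $1$-factor $M$ and pair one Hamilton cycle with $M$; this expresses $K_t[2]$ as one $6$-regular block $(C_t\oplus M)[2]$ plus $(t-4)/2$ blocks isomorphic to $C_t[2]$. The $6$-regular block yields three factors of one chosen type and each $C_t[2]$-block yields two, so the attainable values of $\alpha$ are $\{0,2,4,\dots,t-4\}\cup\{3,5,\dots,t-1\}$, i.e.\ every $\alpha$ with $\alpha+\beta=t-1$ except $\alpha=1$ and $\alpha=t-2$ --- which are exactly the excluded cases $\alpha=1$ and $\beta=1$ of part~1.

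The crux, and the step I expect to be the real obstacle, is the H\"aggkvist-type lemma. One pictures $C_s[2]$ as $s$ copies of $K_{2,2}$ glued cyclically around the blown-up $s$-cycle, each $K_{2,2}$ split into a ``parallel'' and a ``crossing'' perfect matching; choosing parallel or crossing in each gap, and taking the complementary choice, $2$-factorizes $C_s[2]$ with the cycle lengths governed by the crossing pattern, and local swaps then cut long cycles down to the shorter even cycles that $B$ prescribes --- a concrete but delicate construction that uses essentially that every component of a bipartite $2$-factor has even length. The $6$-regular, three-copy version is handled in the same spirit over the cubic graph $C_s\oplus M$. It is this flexibility that breaks down at $\alpha=1$ or $\beta=1$: a single factor of one type must then be set against the $(v-4)$-regular complement $K_v-I-F_1$, whose $F_2$-factorizability the block decompositions do not supply; for the same reason the case $v\equiv 2\pmod 4$ with $\alpha,\beta$ both odd --- the target of the present paper for uniform factors --- lies beyond this theorem.
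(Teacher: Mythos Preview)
The paper does not prove Theorem~\ref{known results}: it is quoted from \cite{BryantDanziger, Haggkvist 85} as a background result and stated without proof, so there is no ``paper's own proof'' to compare against.

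Your outline is nonetheless exactly the strategy of the cited sources. The identification $K_v-I\cong K_{v/2}[2]$ and the use of a Hamilton decomposition of $K_{v/2}$ (or $K_{v/2}-M$) to reduce to blocks of the form $C_t[2]$ is H\"aggkvist's set-up, and his lemma that $C_s[2]$ admits a $B$-factorization for any bipartite $2$-regular $B$ of order $2s$ is what yields part~2 and the even-$\alpha$ half of part~1. The ``three-copy'' lemma for the $6$-regular block is precisely the new ingredient supplied by Bryant and Danziger; you are right to flag it as the crux. One point worth sharpening: in their argument the perfect matching $M$ paired with the Hamilton cycle is not arbitrary but chosen specifically (essentially the long diagonals of $C_t$), and the decomposition of $(C_t\oplus M)[2]$ into three copies of $B$ is an explicit combinatorial construction rather than a soft consequence of the crossing/parallel picture you sketch. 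With that caveat, your proposal matches the original proofs, and your accounting of the attainable $\alpha$ values in the $v\equiv 0\pmod 4$ case, leaving exactly $\alpha=1$ and $\beta=1$ open, is correct.
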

In fact,~\cite{BryantDanziger} actually proves a more general result.

\begin{theorem}[\cite{BryantDanziger}]\label{BD}
Let $\mathcal{F}=\{F_1, F_2, \ldots, F_t\}$ be a collection of bipartite 2-regular graphs of order $v$ and let $\alpha_1, \alpha_2, \ldots, \alpha_t$ be nonnegative integers satisfying $\alpha_1+\alpha_2+\ldots+\alpha_t = \frac{v-2}{2}$.  If $\alpha_1 \geq 3$ is odd and $\alpha_i$ is even for each $i \in \{2,3,\ldots,t\}$, then $K_v$ admits an $\mathcal{F}$-factorization in which $\alpha_i$ factors are isomorphic to $F_i$, $i \in \{1,\ldots,t\}$.
\end{theorem}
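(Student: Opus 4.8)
The plan is to build on Häggkvist's construction for bipartite $2$-factorizations. Write $v=2m$. Since each $F_i$ is $2$-regular and bipartite it is a vertex-disjoint union of even cycles, and so it has a balanced bipartition into two classes of size $m$; the hypothesis reads $\sum_{i=1}^{t}\alpha_i=m-1$, so that the $m-1$ prescribed factors saturate all but a $1$-factor and we are really seeking a $2$-factorization of $K_{2m}-I$. The first ingredient is the solution of the bipartite Oberwolfach problem, which already gives, for any single bipartite $2$-factor $F$, a $2$-factorization of $K_{2m}-I$ into copies of $F$; the natural construction proceeds by a ``doubling/twisting'' operation converting a suitable $2$-factorization (or $1$-factorization) of a complete or complete bipartite graph on roughly $m$ vertices into a $2$-factorization of $K_{2m}-I$, together with a rotational (difference-method) argument that pins down the isomorphism type of the resulting factors. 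The point of departure for the present theorem is that this machinery has enough freedom to impose \emph{several different} isomorphism types on the factors, provided the parities of the multiplicities cooperate.

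The second step is the bookkeeping that makes the prescribed list realisable. Because $\alpha_i$ is even for every $i\ge 2$, I would arrange the factors isomorphic to $F_i$, $i\ge2$, to appear two at a time: the doubling step above produces factors in pairs (two ``halves'' from each piece of the base configuration), and a pair of copies of a prescribed bipartite $2$-factor is exactly the kind of object a single piece can be forced to yield after re-routing through the $4$-cycles that doubling creates. The odd remainder of the list — the $\alpha_1$ copies of $F_1$ — cannot be manufactured in pairs, and is instead absorbed using one ``odd'' piece of the base configuration (this is where the oddness of $\alpha_1$ enters), together with an application of the bipartite Oberwolfach solution to a whole block of $F_1$-factors; the latter requires $\alpha_1\ge3$, so that the block is large enough for the re-decomposition to have somewhere to land. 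The same accounting explains why a list with two odd multiplicities cannot be handled this way — one would need two ``odd'' pieces — which is precisely the obstruction that forces $v\equiv2\pmod4$ in the main theorem and motivates the new construction of the paper.

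I expect the realisability lemma to be essentially the entire difficulty. Concretely, one must show that after fixing an appropriate base configuration on about $m$ points, the doubled-and-twisted factors can be forced to match, with the exact required multiplicities, an \emph{arbitrary} admissible list of bipartite $2$-factors of order $2m$ — including factors such as $C_{2m}$ itself, which is not a bipartite double cover and can only arise from a twisted variant of the operation. This needs (i) a careful analysis of how cycles of each length and parity in the base configuration lift, and of which bipartite $2$-factors a union of blown-up cycles can be re-partitioned into; (ii) a rotational / $\rho$-type labelling argument for the pieces the doubling does not produce cleanly; and (iii) the separate treatment of $m$ odd versus $m$ even (when $m$ is even, a leftover $1$-factor of the base graph blows up to a $C_4$-factor that must be folded in). Once this lemma is available, the degree count, the reduction $\sum_i\alpha_i=m-1$, and the parity argument above complete the proof.
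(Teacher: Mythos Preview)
This theorem is not proved in the present paper: it is quoted verbatim as a result of Bryant and Danziger (reference~\cite{BryantDanziger}), and the paper uses it as a black box. There is therefore no ``paper's own proof'' to compare your proposal against.

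As for the proposal itself: it is not a proof but a narrative of how a proof might go, and while the narrative is broadly in the right spirit, it never touches the actual content. The Bryant--Danziger argument does rest on H\"aggkvist's lemma that $C_\ell[2]$ decomposes into two copies of any bipartite $2$-regular graph on $2\ell$ vertices, and the even multiplicities $\alpha_2,\ldots,\alpha_t$ are indeed produced two at a time from Hamilton cycles of a base graph blown up by~$2$. But the heart of the matter --- how the odd block of $\alpha_1\ge 3$ copies of $F_1$ is manufactured --- is precisely what you defer to an unspecified ``realisability lemma'' that you yourself identify as ``essentially the entire difficulty''. In Bryant--Danziger this is done by a concrete decomposition of a specific Cayley graph (roughly, three Hamilton cycles of $K_m$ together with the $1$-factor left over from $K_{2m}-I$, re-assembled into three copies of $F_1$); your items (i)--(iii) gesture at the right kind of work but supply none of it. A reader could not reconstruct a proof from what you have written, so as it stands the proposal has a genuine gap: the key lemma is named but not proved, and the parity bookkeeping alone does not constitute the argument.
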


Bryant, Danziger and Dean~\cite{BryantDanzigerDean} gave a complete solution to the Hamilton-Waterloo Problem with bipartite factors $F_1$ and $F_2$ in the case that $F_1$ is a {\em refinement} of $F_2$, i.e.\ $F_1$ can be obtained from $F_2$ by replacing each cycle of $F_2$ with a bipartite 2-regular graph on the same vertex set.
\begin{theorem}[\cite{BryantDanzigerDean}] \label{refinement}
Let $\alpha, \beta \geq 0$ and $v>0$ be integers with $v$ even, and let $F_1$ and $F_2$ be bipartite 2-regular graphs of order $v$ such that $F_1$ is a refinement of $F_2$.  There is a solution to $\hwp(v;F_1,F_2;\alpha,\beta)$ if and only if $\alpha+\beta=\frac{v-2}{2}$.
\end{theorem}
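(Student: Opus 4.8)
The ``only if'' part is immediate from Theorem~\ref{Necessary}: since $K_v^\ast$ is $(v-2)$-regular, any solution forces $\alpha+\beta=\tfrac{v-2}{2}$. For the ``if'' part, my plan is to first reduce to a short list of residual parameter pairs using the results already available, and then to dispatch those pairs with a construction that uses the refinement hypothesis in an essential way. We may assume $\alpha,\beta\ge 1$, since $\alpha=0$ or $\beta=0$ are instances of the Oberwolfach problem for the bipartite factors $F_2$ or $F_1$, solved in~\cite{BryantDanziger, Haggkvist 85}. If $v\equiv 0\pmod 4$, then Theorem~\ref{known results}(1) realizes every admissible pair except possibly those with $\min\{\alpha,\beta\}=1$. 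If $v\equiv 2\pmod 4$, then $\tfrac{v-2}{2}$ is even, so $\alpha$ and $\beta$ have the same parity: the ``both even'' case is covered by Theorem~\ref{known results}(2), leaving the ``both odd'' case (which contains the subcase $\min\{\alpha,\beta\}=1$). Hence it suffices to realize (i)~$(\alpha,\beta)\in\bigl\{(1,\tfrac{v-2}{2}-1),\,(\tfrac{v-2}{2}-1,1)\bigr\}$ for every even $v$, and (ii)~every pair of odd $\alpha,\beta$ with $\alpha+\beta=\tfrac{v-2}{2}$ when $v\equiv 2\pmod 4$.

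The engine for (i) and (ii) would be a local \emph{exchange} move licensed by the refinement hypothesis. Write $F_2$ as a vertex-disjoint union of even cycles $C^{(1)},\dots,C^{(s)}$ with $W_j=V(C^{(j)})$, so that $F_1$ is obtained by replacing each $C^{(j)}$ by a prescribed bipartite $2$-regular graph $R^{(j)}$ on $W_j$. Starting from a $2$-factorization realizing a parameter pair that is already known --- the bipartite Oberwolfach solutions $(0,\tfrac{v-2}{2})$ and $(\tfrac{v-2}{2},0)$, or, when $v\equiv 0\pmod 4$, the pairs $(3,\tfrac{v-2}{2}-3)$ and $(\tfrac{v-2}{2}-3,3)$ from Theorem~\ref{BD} --- I would select a small block of factors and re-decompose the union of their edge sets so that the block gains or loses a bounded number of $F_1$-factors, leaving every other factor untouched. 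The moves required are of the shapes ``two $F_2$-factors $\rightarrow$ two $F_1$-factors'' (and its reverse) and ``three $F_2$-factors $\rightarrow$ one $F_1$-factor plus two $F_2$-factors'' (and its reverse): a single move of the second shape adjusts the parity of $\alpha$, and iterating moves of the first shape then sweeps out the entire interval of admissible $(\alpha,\beta)$. Because each $R^{(j)}$ lives on the same vertex set $W_j$ as $C^{(j)}$, each such move can be attempted class by class: on $W_j$ one is given a bounded number of edge-disjoint copies of $C^{(j)}$ and/or $R^{(j)}$ and must re-express their union as a prescribed multiset of copies of $C^{(j)}$ and $R^{(j)}$, a finite problem to be settled once for each local pattern.

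The main obstacle, I expect, has two faces. First, an arbitrary starting $2$-factorization need not be ``aligned'': its factors of a given type need not induce the same partition $W_1,\dots,W_s$ of $V$, and without such alignment the exchange cannot be localized. This forces one either to build a suitably structured base factorization to begin with (for instance via a graph-product or amalgamation construction on which the cycle classes of $F_2$ are controlled), or to argue that the reconfigurable sub-block occurs in any solution; I would expect the former to be the cleaner route. Second, and more delicate, is the case $v\equiv 2\pmod 4$ with $\alpha,\beta$ both odd: Theorem~\ref{BD} is unavailable there, and the only base factorizations at hand ($\alpha$ or $\beta$ equal to $0$, or both even) are parity-balanced in a way that seems to block the cheapest exchanges, so producing the first odd factor of a given type appears to need a genuinely new ingredient --- plausibly one explicit small factorization together with a recursive or product step carrying it to all such $v$ --- and a handful of small orders may have to be treated by hand.
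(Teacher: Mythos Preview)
The paper does not prove this theorem at all: it is quoted verbatim from \cite{BryantDanzigerDean} as background, with no argument given. There is therefore no ``paper's own proof'' to compare your proposal against.

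As for the proposal itself, it is not a proof but a plan, and you say so explicitly. The reduction via Theorems~\ref{known results} and~\ref{BD} is sound and does isolate the genuinely hard cases, but the ``exchange move'' you describe is only a wish: you would need, for each local pattern, an explicit re-decomposition of a bounded union of copies of $C^{(j)}$ and $R^{(j)}$ on $W_j$, and you do not provide one. More seriously, you yourself identify that for $v\equiv 2\pmod 4$ with $\alpha,\beta$ both odd the available base factorizations are parity-blocked and ``a genuinely new ingredient'' is required; that is exactly the content of \cite{BryantDanzigerDean}, and your sketch does not supply it. So the proposal correctly locates where the difficulty lies but stops short of resolving it.
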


Note that a $C_m$-factor is a refinement of a $C_n$-factor if and only if $m \mid n$.  Thus, in the uniform case, Theorems~\ref{known results} and~\ref{refinement} yield the following:
\begin{theorem}[\cite{BryantDanziger, BryantDanzigerDean, Haggkvist 85}] \label{known uniform}
Let $v$ be a positive even integer, and let $n>m \geq 2$ and $\alpha, \beta \geq 0$ be integers.  There is a solution to $\hwp(v;2m,2n;\alpha,\beta)$ if and only if $2m$ and $2n$ are both divisors of $v$ and $\alpha+\beta=\frac{v-2}{2}$, except possibly when
\begin{enumerate}
\item $v \equiv 0$ (mod 4), $m \nmid n$, and $1 \in \{\alpha, \beta\}$;
\item $v \equiv 2$ (mod 4), $m \nmid n$, and $\alpha$ and $\beta$ are both odd.
\end{enumerate}
\end{theorem}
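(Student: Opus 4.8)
The plan is to obtain Theorem~\ref{known uniform} as a short corollary of the results quoted above, via a case split on whether $m \mid n$ and on $v \bmod 4$. First I would dispose of necessity: since $v$ is even, $K_v^\ast = K_v - I$ is $(v-2)$-regular, so Theorem~\ref{Necessary} (with $2r = v-2$) immediately gives that any solution forces $2m \mid v$, $2n \mid v$, $\alpha,\beta \ge 0$ and $\alpha+\beta = \frac{v-2}{2}$. For sufficiency, assume these conditions hold together with the negations of exceptions (1) and (2). Since $2m, 2n \ge 4$ are even, the $C_{2m}$-factor $F_1$ and the $C_{2n}$-factor $F_2$ are bipartite $2$-regular graphs of order $v$, so the bipartite results of Theorems~\ref{known results} and~\ref{refinement} are applicable.

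The first case is $m \mid n$. Then $2m \mid 2n$, and a $C_{2m}$-factor is a refinement of a $C_{2n}$-factor: replace each $C_{2n}$ by $n/m$ vertex-disjoint copies of $C_{2m}$ on the same vertex set (which has $2n = (n/m)\cdot 2m$ vertices). Theorem~\ref{refinement} then yields a solution directly from $\alpha+\beta = \frac{v-2}{2}$, with no exceptions; this is consistent with the fact that both listed exceptions carry the hypothesis $m \nmid n$.

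The remaining case is $m \nmid n$. If $v \equiv 0 \pmod 4$, then since we are not in exception (1) we have $1 \notin \{\alpha,\beta\}$, and Theorem~\ref{known results}(1) applied to $F_1$ and $F_2$ produces the solution. If $v \equiv 2 \pmod 4$, then $v-2 \equiv 0 \pmod 4$, so $\frac{v-2}{2}$ is even and hence $\alpha$ and $\beta$ have the same parity; since we are not in exception (2) they are not both odd, so they are both even, and Theorem~\ref{known results}(2) provides the solution.

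There is no serious obstacle; the statement is essentially a bookkeeping consequence of Theorems~\ref{Necessary}, \ref{known results} and~\ref{refinement}. The one point needing a moment's care is the $v \equiv 2 \pmod 4$ subcase, where one must observe that the arithmetic constraint $\alpha+\beta = \frac{v-2}{2}$ makes ``$\alpha,\beta$ not both odd'' equivalent to ``$\alpha,\beta$ both even,'' which is precisely the hypothesis of Theorem~\ref{known results}(2); and one should note that the ``$m \nmid n$'' clause in both exceptions is exactly what lets the refinement result clean up the $m \mid n$ case with no exceptions at all. Finally, the degenerate possibilities $\alpha = 0$ or $\beta = 0$ are covered as well, since the cited theorems are stated for $\alpha,\beta \ge 0$ and in those cases reduce to the uniform Oberwolfach problem (Theorem~\ref{uniform OP}), which has no exceptions when the cycle length is even.
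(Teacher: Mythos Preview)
Your argument is correct and matches the paper's approach exactly: the paper presents Theorem~\ref{known uniform} simply as the specialization of Theorems~\ref{known results} and~\ref{refinement} to uniform factors, and your case split on $m\mid n$ versus $m\nmid n$ (then on $v\bmod 4$) is precisely how one reads it off. The only addition you make is spelling out necessity via Theorem~\ref{Necessary} and the parity observation for $v\equiv 2\pmod 4$, both of which are the intended routine checks.
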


In this paper, we improve upon these results for uniform bipartite factors.  Since we assume the cycle lengths are even, we will henceforth consider $\hwp(G;2m,2n;\alpha,\beta)$.  In Section 2, we give a method for extending known solutions of $\hwp(C_m[n];m,n;\alpha,\beta)$ to obtain solutions of $\hwp(C_{2m}[n];2m,2n;\alpha,\beta)$.  This method is used along with other techniques in Section 3 to construct particular 2-factorizations of the lexicographic product of a cycle with an empty graph.  Finally, in Section 4, we present results on $\hwp(K_t[w];2m,2n;\alpha,\beta)$ and $\hwp(v;2m,2n;\alpha,\beta)$.  In particular, we give a near-complete solution to $\hwp(v;2m,2n;\alpha,\beta)$ when $\alpha$ and $\beta$ are odd, with possible exceptions remaining only when $\beta \in \{1,3\}$, $\alpha=1$ or $v=2mn/\gcd(m,n)$.  We also give some new sufficient conditions for the existence of a solution to $\hwp(v;2m,2n;1,\beta)$ when $v \equiv 0$ (mod 4).
\vspace*{2ex}

\section{Extending 2-factorizations}

As a step towards constructing solutions of $\hwp(v;2m,2n;\alpha,\beta)$, we will first consider the related problem of finding 2-factorizations of $C_{2m}[n]$.  It will be useful to view a graph $C_{wm}[n]$ as a type of Cayley graph, which we now define.  

Let $\Gamma$ be an additive group and $S \subseteq \Gamma \setminus \{0\}$.  The {\em Cayley graph} $\mathrm{Cay}(\Gamma,S)$ has vertex set $\Gamma$ and edge set $\{a (d+a) \mid a \in \Gamma, d \in S\}$.  Note that $d \in S$ and $-d \in -S$ generate the same edges, and so $\mathrm{Cay}(\Gamma,S) \cong \mathrm{Cay}(\Gamma,-S)$.  Hence $\mathrm{Cay}(\Gamma,S)$ is $|S \cup (-S)|$-regular.  

If $\Gamma=\Z_n$, then $\mathrm{Cay}(\Z_n,S)$ is a {\em circulant graph} with connection set $S$, denoted $\langle S \rangle_n$.  For future reference, we note the following result on 2-factorizations of Cayley graphs.

\begin{theorem}[\cite{Bermond}]\label{4-reg}
Every connected 4-regular Cayley graph admits a factorization into Hamilton cycles.
\end{theorem}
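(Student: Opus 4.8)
The final statement is the theorem of Bermond, Favaron and Mah\'eo on Cayley graphs of degree $4$ (note that, being written additively, $\Gamma$ is abelian), and I would follow the structure of their argument. Since a Cayley graph is connected exactly when its connection set generates the group, we may assume $\langle S\rangle=\Gamma$, and replacing $S$ by the symmetric set $S\cup(-S)$ we may also assume $S=-S$, so that $|S|=4$. Up to this normalisation there are three shapes for $S$: (i) the generic case $S=\{a,-a,b,-b\}$ with $a,b$ of order at least $3$ and $b\notin\{a,-a\}$; (ii) the mixed case $S=\{a,-a,b,c\}$ with $b,c$ distinct involutions; and (iii) $S=\{a,b,c,d\}$ with four distinct involutions. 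Cases (ii) and (iii) are the easier ones — an involution contributes a perfect matching, which can be combined with the cycle structure coming from the non-involutions in case (ii), or, in case (iii), one works directly with the resulting grid-like graph — so the crux is case (i).

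In case (i), put $H=\langle a\rangle$, let $h=|H|\ge 3$, and let $k$ be the order of $b+H$ in $\Gamma/H$, so that $\Gamma=\langle a,b\rangle$ has order $hk$ and $kb=\lambda a$ for some $\lambda\in\Z_h$. Labelling the cosets $C_i=ib+H$ for $i\in\Z_k$ and coordinatising $C_i$ by the bijection $j\mapsto ib+ja$ ($j\in\Z_h$), the Cayley graph takes the form of a \emph{twisted torus}: inside each $C_i$ the $\pm a$-edges form the $h$-cycle $0\,1\,2\cdots(h-1)\,0$; the $\pm b$-edges between $C_i$ and $C_{i+1}$ form the identity matching for $0\le i\le k-2$; and the $\pm b$-edges between $C_{k-1}$ and $C_0$ form the matching $j\mapsto j+\lambda$. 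When $\lambda\equiv 0$ this is the Cartesian product $C_h\,\square\,C_k$; the $\lambda$-shift is exactly what records the extension data of $\Gamma$, and the arithmetic relating $h$, $k$ and $\lambda$ that makes $\Gamma=\langle a,b\rangle$ is what one must exploit below.

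The plan is then to exhibit two edge-disjoint spanning cycles in this graph. One cycle $Z_1$ is built ``staircase'' fashion: in coset $C_i$ one traverses all but one of the $h$ available $a$-edges and then crosses to $C_{i+1}$ along a single $b$-edge, the exit and entry points being chosen so that, after passing through all $k$ cosets and absorbing the shift $\lambda$ on the wrap-around step, the closed walk so obtained meets every vertex exactly once. The complementary $2$-factor $Z_2$ then automatically consists of the $k$ unused $a$-edges (one in each coset) together with the $k$ unused $b$-edges, and one must make the choices in $Z_1$ in such a way that $Z_2$ is likewise a single cycle rather than a union of shorter ones. Carrying this out requires distinguishing cases according to the parities of $h$ and $k$ and the value of $\gcd(k,\lambda)$ (together with a handful of small or degenerate parameter values, such as $k\le 2$, handled directly), producing in each case an explicit description of $Z_1$ and $Z_2$.

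I expect the main obstacle to be precisely this last point: producing \emph{one} Hamilton cycle is routine, but arranging that the leftover $a$- and $b$-edges reassemble into a single Hamilton cycle is delicate, and it is here that the connectivity hypothesis — via the number-theoretic relations among $h$, $k$ and $\lambda$ — is genuinely used and the case analysis becomes unavoidable. The involution cases (ii) and (iii) require their own constructions but are free of this difficulty.
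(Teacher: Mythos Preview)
The paper does not supply a proof of this statement: it is quoted as a result of Bermond, Favaron and Mah\'eo~\cite{Bermond} and cited ``for future reference'' only, so there is no argument in the paper to compare your proposal against. Your sketch does faithfully outline the strategy of the original paper~\cite{Bermond} for the abelian case (which is indeed the setting intended here, and in fact is exactly what~\cite{Bermond} establishes), so as a reconstruction of the cited source it is on target; but for the purposes of the present paper no proof is expected or needed.
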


Let $M$ and $n$ be positive integers with $M \geq 3$.  It is easy to see that $C_{M}[n]\cong 
\mathrm{Cay}(\Z_{M}\times\Z_n,$ $\{1\}\times\Z_n)$.  We use $x_i$ to denote the vertex $(x,i) \in V(C_{M}[n]) = \Z_{M}\times\Z_n$.  An edge $e=x_i (x+1)_j \in E(C_{M}[n])$ has {\em difference} $j-i \in \Z_n$.  
Given a subset $S$ of $\Z_n$, we denote by $C_{M}[n,S]$ the subgraph of $C_{M}[n]$ induced by the edges whose differences are in $S$, i.e.\ $C_{M}[n,S] \cong \mathrm{Cay}(\Z_M \times \Z_n, \{1\} \times S)$.  When the value of $n$ is understood we will denote $C_{M}[n,S]$ by $C_{M}[S]$.
We say that an edge-disjoint set $\mathcal{F}$ of 2-factors of $C_{M}[n]$ {\em covers} $S$ if $\mathcal{F}$ is a 2-factorization of $C_{M}[S]$.

We will now show how to use existing 2-factorizations of $C_m[n,S]$ to construct a 2-factorization of 
$C_{wm}[n,S]$ in which the cycle lengths are multiplied by $w$.  The basic idea is as follows.  Given $w$ copies of $C_m[n,S]$ on vertex set $[0,m-1] \times \Z_n$ and a 2-factor of each, we replace edges of the form 
$(m-1)_i 0_j$ in each $C_m[n,S]$ with the corresponding edges joining one copy of $C_m[n,S]$ to the next, see the picture below.  The 2-factors are correspondingly spliced together to form a 2-factor of $C_{wm}[n,S]$.     

\begin{center}
\begin{tikzpicture}[x=1cm,y=1cm,scale=1]
\draw (-0.5,-0.75) -- (-0.5,0.75) -- (1.5,0.75) -- (1.5,-0.75) -- cycle;
\draw (1.5,-0.75) -- (3.5,-0.75) -- (3.5,0.75) -- (1.5,0.75);
\draw (5.5,-0.75) -- (9.5,-0.75) -- (9.5,0.75) -- (5.5,0.75) -- cycle;
\draw[dashed] (7.5,0.75) -- (7.5,-0.75);
\draw[thick] (0,0) -- (1,0);
\draw[thick] (0,0) .. controls (0.5,-0.5) .. (1,0);
\draw[thick] (2,0) -- (3,0);
\draw[thick] (2,0) .. controls (2.5,-0.5) .. (3,0);
\foreach \x in {0,1,2,3}{
\draw[fill=gray](\x,0) circle (3pt);
}
\draw[->] (4,0) -- (5,0);

\begin{scope}[shift={(6,0)}]
\draw[thick] (0,0) .. controls (0.5,-0.5) .. (1,0);
\draw[thick] (1,0) -- (2,0);
\draw[thick] (2,0) .. controls (2.5,-0.5) .. (3,0);
\draw[thick] (0,0) .. controls (1.5,0.5) .. (3,0);
\foreach \x in {0,1,2,3}{
\draw[fill=gray](\x,0) circle (3pt);
}

\end{scope}
\end{tikzpicture}
\end{center}

Although the main result of this section (Theorem~\ref{ASSW extension}) considers only the case that $w$ is a power of 2, we note that this method may be applied more generally, but care must be taken to avoid the creation of short cycles.  

We start by defining some notation.  Let $m,n$ and $w$ be positive integers, with $m\geq 3$ and $w\geq 2$.   
We take $C_m[n]$ to have vertex set $V(C_{m}[n])=[0,m-1]\times\Z_n$, so that $x_i y_j \in E(C_{m}[n])$ if and only if $x-y \in \{\pm 1, \pm (m-1)\}$.  Similarly, letting $P_{m}$ denote the path of length 
$m$, we take $V(P_{m}[n])=[0,m]\times\Z_n$, with $x_i y_j \in E(P_{m}[n])$ if and only if $x-y \in \{\pm 1\}$ (where this difference is computed in $\mathbb{Z}$).  Recalling that $C_{wm}[n]\cong \mathrm{Cay}(\Z_{wm}\times\Z_n, \{1\}\times\Z_n)$, in this section we view the vertex sets of $C_m[n]$ and 
$P_{m}[n]$ as subsets of $V(C_{wm}[n])$, so the intervals $[0,m-1]$ and $[0,m]$ are considered as subsets of $\Z_{wm}$.

Given a subgraph $H$ of $C_m[n]$, we denote by $H^*$ the subgraph of $C_{wm}[n]$ with vertices in $[0,m]\times \Z_n$
such that $E(H^*) = \mathcal{E}_1 \ \cup \ \mathcal{E}_2$ where
  \begin{align*}
    \mathcal{E}_1 &= E(H)\ \cap\ E(C_{wm}[n]),\\
    \mathcal{E}_2 &= \big\{(m-1, y_1)(m, y_2)\mid
    (0, y_2)(m-1, y_1)\in E(H)\big\},
  \end{align*}
As an example, the $23$-cycle $H$ in Figure \ref{H} is a subgraph of $C_{5}[7]$. The graph $H^*$, shown in Figure \ref{H*}, is the vertex disjoint union of two paths of length $5$ and one path of length $10$. 

\definecolor{ccqqqq}{rgb}{0.9,0.,0.}
\definecolor{qqzzff}{rgb}{0.,0.6,1.}
\definecolor{ffffff}{rgb}{1.,1.,1.}
\definecolor{cqcqcq}{rgb}{0.75,0.75,0.75}

\noindent
\begin{figure}[h]
\centering
\begin{minipage}[t]{.45\textwidth}
\begin{tikzpicture}[line cap=round,line join=round,>=triangle 45, x=.8cm,y=.9cm]
\draw[->,color=black] (1,0) -- (1,7.5);
\foreach \y in {0,1,2,3,4,5,6}
\draw[shift={(1,\y+1)},color=black] (2pt,0pt) -- (-2pt,0pt) node[left] {\footnotesize $\y$};
\draw[shift={(1,8)}, color=black] node {$\mathbb{Z}_{7}$};

\draw[->,color=black] (1,0) -- (6.5, 0);
\foreach \x in {0,1,2,3,4}
\draw[shift={(\x+2,0)},color=black] (0pt, 2pt) -- (0pt,-2pt) node[below, below] {\footnotesize $\x$};
\clip(1,0) rectangle (7,8);

\draw [line width=2pt] (2.,7.)-- (3.,6.)-- (4.,7.)-- (5.,6.)-- (6.,5.)-- (5,4.)-- (4,4.)-- (3,5.)-- (2,4.)-- (3,3.)-- (4,3.)-- (5,3.)-- (6,2.);

\draw [line width=2pt] (2.,2.)-- (3.,2.)-- (4.,2.)-- (5.,2.)-- (6.,1.);

\draw [line width=2pt] (2.,1.)-- (3.,1.)-- (4.,1.)-- (5.,1.);

\draw [line width=2pt]  (5,1) to[out=65, in=235] (6,7);

\draw [line width=2pt, snake=snake]  (2,7) to[out=30, in=150] (6,7);
\draw [line width=2pt]  (2,1) to[out=-30, in=210] (6,1);
\draw [line width=2pt]  (2,2) to[out=30, in=150] (6,2);

\begin{scriptsize}
\draw [fill=black] (2.,1.) circle (4.5pt);
\draw [fill=black] (3.,1.) circle (4.5pt);
\draw [fill=black] (4.,1.) circle (4.5pt);
\draw [fill=black] (5.,1.) circle (4.5pt);
\draw [fill=black] (6.,1.) circle (4.5pt);
\draw [fill=black] (2.,2.) circle (4.5pt);
\draw [fill=black] (3.,2.) circle (4.5pt);
\draw [fill=black] (4.,2.) circle (4.5pt);
\draw [fill=black] (5.,2.) circle (4.5pt);
\draw [fill=black] (6.,2.) circle (4.5pt);
\draw [fill=cqcqcq] (2,3) circle (4.5pt);
\draw [fill=black] (3,3) circle (4.5pt);
\draw [fill=black] (4,3) circle (4.5pt);
\draw [fill=black] (5,3) circle (4.5pt);
\draw [fill=cqcqcq] (6,3) circle (4.5pt);
\draw [fill=black] (2,4) circle (4.5pt);
\draw [fill=cqcqcq] (3,4) circle (4.5pt);
\draw [fill=black] (4,4) circle (4.5pt);
\draw [fill=black] (5,4) circle (4.5pt);
\draw [fill=cqcqcq] (6,4) circle (4.5pt);
\draw [fill=cqcqcq] (2.,5.) circle (4.5pt);
\draw [fill=black] (3.,5.) circle (4.5pt);
\draw [fill=cqcqcq] (4.,5.) circle (4.5pt);
\draw [fill=cqcqcq] (5.,5.) circle (4.5pt);
\draw [fill=black] (6.,5.) circle (4.5pt);
\draw [fill=cqcqcq] (2.,6.) circle (4.5pt);
\draw [fill=black] (3.,6.) circle (4.5pt);
\draw [fill=cqcqcq] (4.,6.) circle (4.5pt);
\draw [fill=black] (5.,6.) circle (4.5pt);
\draw [fill=cqcqcq] (6.,6.) circle (4.5pt);
\draw [fill=black] (2.,7.) circle (4.5pt);
\draw [fill=cqcqcq] (3.,7.) circle (4.5pt);
\draw [fill=black] (4.,7.) circle (4.5pt);
\draw [fill=cqcqcq] (5.,7.) circle (4.5pt);
\draw [fill=black] (6.,7.) circle (4.5pt);
\end{scriptsize}
\end{tikzpicture}
\caption{A $23$-cycle $H$ contained in $C_5[7]$.} \label{H}
\end{minipage}%
\begin{minipage}[t]{.55\textwidth}
\centering
\begin{tikzpicture}[line cap=round,line join=round,>=triangle 45, x=.8cm,y=.9cm]
\draw[->,color=black] (1,0) -- (1,7.5);
\draw[dotted] (6.5,0) -- (6.5,7.5);

\foreach \y in {0,1,2,3,4,5,6}
\draw[shift={(1,\y+1)},color=black] (2pt,0pt) -- (-2pt,0pt) node[left] {\footnotesize $\y$};
\draw[shift={(1,8)}, color=black] node {$\mathbb{Z}_{7}$};

\draw[->,color=black] (1,0) -- (7.5, 0);
\foreach \x in {0,1,2,3,4,5}
\draw[shift={(\x+2,0)},color=black] (0pt, 2pt) -- (0pt,-2pt) node[below, below] {\footnotesize $\x$};
\clip(0,0) rectangle (8,8);

\draw [line width=2pt] (2.,7.)-- (3.,6.)-- (4.,7.)-- (5.,6.)-- (6.,5.)-- (5,4.)-- (4,4.)-- (3,5.)-- (2,4.)-- (3,3.)-- (4,3.)-- (5,3.)-- (6,2.);

\draw [line width=2pt] (2.,2.)-- (3.,2.)-- (4.,2.)-- (5.,2.)-- (6.,1.);

\draw [line width=2pt] (2.,1.)-- (3.,1.)-- (4.,1.)-- (5.,1.);

\draw [line width=2pt] (6.,7.)-- (7.,7.);
\draw [line width=2pt] (6.,1.)-- (7,1);
\draw [line width=2pt] (6.,2.)-- (7.,2.);

\draw [line width=2pt]  (5,1) to[out=65, in=235] (6,7);


\begin{scriptsize}
\draw [fill=black] (2.,1.) circle (4.5pt);
\draw [fill=black] (3.,1.) circle (4.5pt);
\draw [fill=black] (4.,1.) circle (4.5pt);
\draw [fill=black] (5.,1.) circle (4.5pt);
\draw [fill=black] (6.,1.) circle (4.5pt);
\draw [fill=black] (7.,1.) circle (4.5pt);
\draw [fill=black] (2.,2.) circle (4.5pt);
\draw [fill=black] (3.,2.) circle (4.5pt);
\draw [fill=black] (4.,2.) circle (4.5pt);
\draw [fill=black] (5.,2.) circle (4.5pt);
\draw [fill=black] (6.,2.) circle (4.5pt);
\draw [fill=black] (7.,2.) circle (4.5pt);
\draw [fill=cqcqcq] (2,3) circle (4.5pt);
\draw [fill=black] (3,3) circle (4.5pt);
\draw [fill=black] (4,3) circle (4.5pt);
\draw [fill=black] (5,3) circle (4.5pt);
\draw [fill=cqcqcq] (6,3) circle (4.5pt);
\draw [fill=cqcqcq] (7.,3.) circle (4.5pt);
\draw [fill=black] (2,4) circle (4.5pt);
\draw [fill=cqcqcq] (3,4) circle (4.5pt);
\draw [fill=black] (4,4) circle (4.5pt);
\draw [fill=black] (5,4) circle (4.5pt);
\draw [fill=cqcqcq] (6,4) circle (4.5pt);
\draw [fill=cqcqcq] (7,4) circle (4.5pt);
\draw [fill=cqcqcq] (2.,5.) circle (4.5pt);
\draw [fill=black] (3.,5.) circle (4.5pt);
\draw [fill=cqcqcq] (4.,5.) circle (4.5pt);
\draw [fill=cqcqcq] (5.,5.) circle (4.5pt);
\draw [fill=black] (6.,5.) circle (4.5pt);
\draw [fill=cqcqcq] (7.,5.) circle (4.5pt);
\draw [fill=cqcqcq] (2.,6.) circle (4.5pt);
\draw [fill=black] (3.,6.) circle (4.5pt);
\draw [fill=cqcqcq] (4.,6.) circle (4.5pt);
\draw [fill=black] (5.,6.) circle (4.5pt);
\draw [fill=cqcqcq] (6.,6.) circle (4.5pt);
\draw [fill=cqcqcq] (7.,6.) circle (4.5pt);
\draw [fill=black] (2.,7.) circle (4.5pt);
\draw [fill=cqcqcq] (3.,7.) circle (4.5pt);
\draw [fill=black] (4.,7.) circle (4.5pt);
\draw [fill=cqcqcq] (5.,7.) circle (4.5pt);
\draw [fill=black] (6.,7.) circle (4.5pt);
\draw [fill=black] (7.,7.) circle (4.5pt);
\end{scriptsize}
\end{tikzpicture}
\caption{$H^* \simeq P_{13}\ \cup\ P_5\ \cup\ P_5$} \label{H*}
\end{minipage}
\end{figure} 
 
Now, set $\Gamma = m\Z_{wm}\times \{0\}$, 
where $m\Z_{wm}=\{\lambda m\mid \lambda\in[0, w-1]\}$, and 
let $H(w)=Orb_{\Gamma}(H^*)$ be the union of all translates of $H^*$ by any element of $\Gamma$. We note that the vertices of $H(w)$ are translates of the vertices of $H$.
Figure~\ref{H(3)} shows the graph $H(3)$, with $H$ being the graph in Figure \ref{H}. In this example, 
$H(3)$ is obtained by shifting the graph $H^*$ from Figure~\ref{H*} rightwards, and it is the vertex disjoint union of three $23$-cycles.

Note that $H^*$, and hence $H(w)$, is a subgraph of $C_{wm}[n]$; also, $H^*+\gamma_1$ and $H^*+\gamma_2$, with $\gamma_1, \gamma_2\in \Gamma$, are edge-disjoint  whenever $\gamma_1\neq \gamma_2$. Therefore, $|E(H(w))| = w |E(H)|$. Finally, note that $C_m[n]^*=P_m[n]$ and $(C_m[n])(w) = C_{wm}[n]$. 
\definecolor{ccqqqq}{rgb}{0.9,0.,0.}
\definecolor{qqzzff}{rgb}{0.,0.6,1.}
\definecolor{ffffff}{rgb}{1.,1.,1.}
\definecolor{cqcqcq}{rgb}{0.75,0.75,0.75}

\noindent
\begin{figure}[h]
\centering
\begin{tikzpicture}[line cap=round,line join=round,>=triangle 45,x=0.7cm,y=1cm]
\draw[dotted] (6.5,0) -- (6.5,8);
\draw[dotted] (11.5,0) -- (11.5,8);

\draw[->,color=black] (1,0) -- (1,7.5);
\foreach \y in {0,1,2,3,4,5,6}
\draw[shift={(1,\y+1)},color=black] (2pt,0pt) -- (-2pt,0pt) node[left] {\footnotesize $\y$};
\draw[shift={(1,8)}, color=black] node {$\mathbb{Z}_{7}$};

\draw[->,color=black] (1,0) -- (16.7, 0);
\foreach \x in {0,1,2,3,4,5,6,7,8,9,10,11,12,13,14}
\draw[shift={(\x+2,0)},color=black] (0pt, 2pt) -- (0pt,-2pt) node[below, below] {\footnotesize $\x$};
\draw[shift={(17.5,0)}, color=black] node {$\mathbb{Z}_{5\cdot 3}$};

\draw [line width=2pt] (2.,7.)-- (3.,6.)-- (4.,7.)-- (5.,6.)-- (6.,5.)-- (5,4.)-- (4,4.)-- (3,5.)-- (2,4.)-- (3,3.)-- (4,3.)-- (5,3.)-- (6,2.);

\draw [line width=2pt] (7.,2.)-- (8.,2.)-- (9.,2.)-- (10.,2.)-- (11.,1.)-- (12.,1.)-- (13.,1.)-- (14.,1.)-- (15.,1.);
\draw [line width=2pt] (16.,7.);

\draw [dashed, line width=2.5pt,color=qqzzff] (2.,2.)-- (3.,2.)-- (4.,2.)-- (5.,2.)-- (6.,1.);

\draw [dashed, line width=2.5pt,color=qqzzff] (7,1)-- (8.,1.)-- (9.,1.)-- (10.,1.); 
\draw [dashed, line width=2.5pt,color=qqzzff] (11.,7.)-- (12.,7.)-- (13.,6.)-- (14.,7.)-- (15.,6.)-- (16.,5.)-- (15.,4.)-- (14.,4.)-- (13.,5.)-- (12.,4.)-- (13.0,3.0)-- (14.,3.)-- (15.,3.)-- (16.,2.);

\draw [dotted, line width=2.5pt,color=ccqqqq] (2.,1.)-- (3.,1.)-- (4.,1.)-- (5.,1.);

\draw [dotted, line width=2.5pt,color=ccqqqq] (7.,7.)-- (8.,6.)-- (9.,7.)-- (10.,6.)-- (11.,5.)-- (10.,4.0)-- (9.,4.)-- (8.,5.)-- (7.,4.)-- (8.,3.)-- (9.,3.)-- (10.,3.)-- (11.,2.)-- (12.,2.)-- (13.,2.)-- (14.,2.)-- (15.,2.)-- (16.,1.);

\draw [dotted, line width=2.5pt,  color=ccqqqq] (6.,7.)-- (7.,7.);
\draw [dashed, line width=2.5pt,  color=qqzzff] (6.,1.)-- (7,1);
\draw [line width=2pt] (6.,2.)-- (7.,2.);

\draw [dotted, color=ccqqqq, line width=2.5pt]  (5,1) to[out=55, in=235, looseness=0.6] (6,7);
\draw [dashed, color=qqzzff, line width=2.5pt]  (10,1) to[out=65, in=235] (11,7);
\draw [line width=2pt]  (15,1) to[out=65, in=235] (16,7);

\draw [dotted, color=ccqqqq, line width=2.5pt]  (2,1) to[out=270, in=270, looseness=0.2] (16,1);
\draw [dashed, color=qqzzff, line width=2.5pt]  (2,2) to[out=270, in=270, looseness=0.2] (16,2);
\draw [line width=2pt]  (2,7) to[out=90, in=90, looseness=0.2] (16,7);

\begin{scriptsize}
\draw [fill=black] (2.,1.) circle (4.5pt);
\draw [fill=black] (3.,1.) circle (4.5pt);
\draw [fill=black] (4.,1.) circle (4.5pt);
\draw [fill=black] (5.,1.) circle (4.5pt);
\draw [fill=black] (6.,1.) circle (4.5pt);
\draw [fill=black] (7.,1.) circle (4.5pt);
\draw [fill=black] (8.,1.) circle (4.5pt);
\draw [fill=black] (9.,1.) circle (4.5pt);
\draw [fill=black] (10.,1.) circle (4.5pt);
\draw [fill=black] (11.,1.) circle (4.5pt);
\draw [fill=black] (12.,1.) circle (4.5pt);
\draw [fill=black] (13.,1.) circle (4.5pt);
\draw [fill=black] (14.,1.) circle (4.5pt);
\draw [fill=black] (15.,1.) circle (4.5pt);
\draw [fill=black] (16.,1.) circle (4.5pt);
\draw [fill=black] (2.,2.) circle (4.5pt);
\draw [fill=black] (3.,2.) circle (4.5pt);
\draw [fill=black] (4.,2.) circle (4.5pt);
\draw [fill=black] (5.,2.) circle (4.5pt);
\draw [fill=black] (6.,2.) circle (4.5pt);
\draw [fill=black] (7.,2.) circle (4.5pt);
\draw [fill=black] (8.,2.) circle (4.5pt);
\draw [fill=black] (9.,2.) circle (4.5pt);
\draw [fill=black] (10.,2.) circle (4.5pt);
\draw [fill=black] (11.,2.) circle (4.5pt);
\draw [fill=black] (12.,2.) circle (4.5pt);
\draw [fill=black] (13.,2.) circle (4.5pt);
\draw [fill=black] (14.,2.) circle (4.5pt);
\draw [fill=black] (15.,2.) circle (4.5pt);
\draw [fill=black] (16.,2.) circle (4.5pt);
\draw [fill=cqcqcq] (2,3) circle (4.5pt);
\draw [fill=black] (3,3) circle (4.5pt);
\draw [fill=black] (4,3) circle (4.5pt);
\draw [fill=black] (5,3) circle (4.5pt);
\draw [fill=cqcqcq] (6,3) circle (4.5pt);
\draw [fill=cqcqcq] (7.,3.011276862583572) circle (4.5pt);
\draw [fill=black] (8.,3.011276862583572) circle (4.5pt);
\draw [fill=black] (9.,3.011276862583572) circle (4.5pt);
\draw [fill=black] (10.,3.011276862583572) circle (4.5pt);
\draw [fill=cqcqcq] (11.,3.011276862583572) circle (4.5pt);
\draw [fill=cqcqcq] (12.,3.011276862583572) circle (4.5pt);
\draw [fill=black] (13.,3.011276862583572) circle (4.5pt);
\draw [fill=black] (14.,3.011276862583572) circle (4.5pt);
\draw [fill=black] (15.,3.011276862583572) circle (4.5pt);
\draw [fill=cqcqcq] (16.,3.011276862583572) circle (4.5pt);
\draw [fill=black] (2,4) circle (4.5pt);
\draw [fill=cqcqcq] (3,4) circle (4.5pt);
\draw [fill=black] (4,4) circle (4.5pt);
\draw [fill=black] (5,4) circle (4.5pt);
\draw [fill=cqcqcq] (6,4) circle (4.5pt);
\draw [fill=black] (7,4) circle (4.5pt);
\draw [fill=cqcqcq] (8.022553725167144,4.011276862583572) circle (4.5pt);
\draw [fill=black] (9.022553725167144,4.011276862583572) circle (4.5pt);
\draw [fill=black] (10.022553725167144,4.011276862583572) circle (4.5pt);
\draw [fill=cqcqcq] (11.022553725167144,4.011276862583572) circle (4.5pt);
\draw [fill=black] (12.022553725167144,4.011276862583572) circle (4.5pt);
\draw [fill=cqcqcq] (13.022553725167144,4.011276862583572) circle (4.5pt);
\draw [fill=black] (14.022553725167144,4.011276862583572) circle (4.5pt);
\draw [fill=black] (15.022553725167144,4.011276862583572) circle (4.5pt);
\draw [fill=cqcqcq] (16.022553725167143,4.011276862583572) circle (4.5pt);
\draw [fill=cqcqcq] (2.,5.) circle (4.5pt);
\draw [fill=black] (3.,5.) circle (4.5pt);
\draw [fill=cqcqcq] (4.,5.) circle (4.5pt);
\draw [fill=cqcqcq] (5.,5.) circle (4.5pt);
\draw [fill=black] (6.,5.) circle (4.5pt);
\draw [fill=cqcqcq] (7.,5.) circle (4.5pt);
\draw [fill=black] (8.,5.) circle (4.5pt);
\draw [fill=cqcqcq] (9.,5.) circle (4.5pt);
\draw [fill=cqcqcq] (10.,5.) circle (4.5pt);
\draw [fill=black] (11.,5.) circle (4.5pt);
\draw [fill=cqcqcq] (12.,5.) circle (4.5pt);
\draw [fill=black] (13.,5.) circle (4.5pt);
\draw [fill=cqcqcq] (14.,5.) circle (4.5pt);
\draw [fill=cqcqcq] (15.,5.) circle (4.5pt);
\draw [fill=black] (16.,5.) circle (4.5pt);
\draw [fill=cqcqcq] (2.,6.) circle (4.5pt);
\draw [fill=black] (3.,6.) circle (4.5pt);
\draw [fill=cqcqcq] (4.,6.) circle (4.5pt);
\draw [fill=black] (5.,6.) circle (4.5pt);
\draw [fill=cqcqcq] (6.,6.) circle (4.5pt);
\draw [fill=cqcqcq] (7.,6.) circle (4.5pt);
\draw [fill=black] (8.,6.) circle (4.5pt);
\draw [fill=cqcqcq] (9.,6.) circle (4.5pt);
\draw [fill=black] (10.,6.) circle (4.5pt);
\draw [fill=cqcqcq] (11.,6.) circle (4.5pt);
\draw [fill=cqcqcq] (12.,6.) circle (4.5pt);
\draw [fill=black] (13.,6.) circle (4.5pt);
\draw [fill=cqcqcq] (14.,6.) circle (4.5pt);
\draw [fill=black] (15.,6.) circle (4.5pt);
\draw [fill=cqcqcq] (16.,6.) circle (4.5pt);
\draw [fill=black] (2.,7.) circle (4.5pt);
\draw [fill=cqcqcq] (3.,7.) circle (4.5pt);
\draw [fill=black] (4.,7.) circle (4.5pt);
\draw [fill=cqcqcq] (5.,7.) circle (4.5pt);
\draw [fill=black] (6.,7.) circle (4.5pt);
\draw [fill=black] (7.,7.) circle (4.5pt);
\draw [fill=cqcqcq] (8.,7.) circle (4.5pt);
\draw [fill=black] (9.,7.) circle (4.5pt);
\draw [fill=cqcqcq] (10.,7.) circle (4.5pt);
\draw [fill=black] (11.,7.) circle (4.5pt);
\draw [fill=black] (12.,7.) circle (4.5pt);
\draw [fill=cqcqcq] (13.,7.) circle (4.5pt);
\draw [fill=black] (14.,7.) circle (4.5pt);
\draw [fill=cqcqcq] (15.,7.) circle (4.5pt);
\draw [fill=black] (16.,7.) circle (4.5pt);
\end{scriptsize}
\draw[dotted] (6.5,0) -- (6.5,7.5);
\draw[dotted] (11.5,0) -- (11.5,7.5);
\end{tikzpicture}
\caption{$H(3) \simeq C_{23}\ \cup\ C_{23}\ \cup\ C_{23}$} \label{H(3)}
\end{figure}
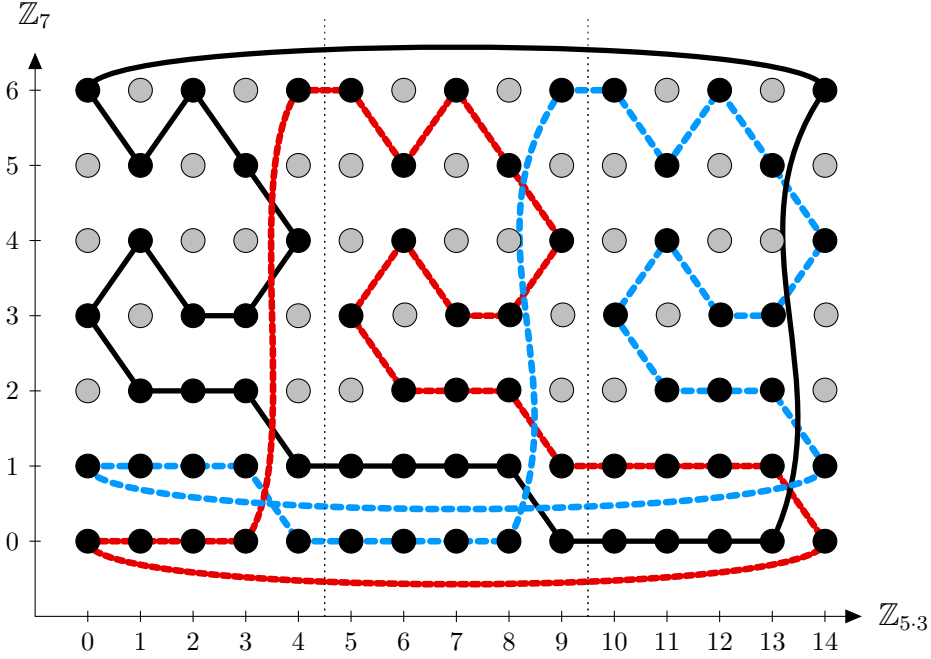

\begin{lemma}\label{properties:H(w)}
  Let $H$ be a subgraph of $C_m[n]$, and for 
  each $i\in [1,t]$ let $H_i$ be a subgraph of $H$. If $w\geq 2$, then the following properties hold:
  \begin{enumerate}
    \item if $H$ is $d$-regular, then $H(w)$ is $d$-regular; \label{properties:H(w):1} 
    \item if the $H_i$s are edge-disjoint (resp., vertex-disjoint), 
    then the $H_i(w)$s are edge-disjoint (resp., vertex-disjoint);\label{properties:H(w):2}
    \item if $H= \bigcup_{i=1}^t H_i$, then $H(w)= \bigcup_{i=1}^t H_i(w)$. \label{properties:H(w):3}
  \end{enumerate}
	Thus, if the $H_i$ are a $d$-regular factorization of $H$, then the $H_i(w)$ are a $d$-regular factorization of $H(w)$.
\end{lemma}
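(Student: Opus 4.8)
The plan is to extract all three numbered properties directly from the definitions of $(\cdot)^{*}$ and $H(w)=Orb_{\Gamma}(\cdot)$ --- these are essentially formal --- and then to assemble the final sentence from them. Throughout I assume $d\geq1$; for $d=0$ the graphs $H,H^{*},H(w)$ are edgeless and there is nothing to prove. Property~\ref{properties:H(w):3} is the easiest: $K\mapsto K^{*}$ distributes over unions of subgraphs, because from $\mathcal E_{1}(K)=E(K)\cap E(C_{wm}[n])$ and $\mathcal E_{2}(K)=\{(m-1,y_{1})(m,y_{2})\mid(0,y_{2})(m-1,y_{1})\in E(K)\}$ one has $\mathcal E_{j}\bigl(\bigcup_{i}H_{i}\bigr)=\bigcup_{i}\mathcal E_{j}(H_{i})$ for $j=1,2$, hence $\bigl(\bigcup_{i}H_{i}\bigr)^{*}=\bigcup_{i}H_{i}^{*}$ (including vertex sets, read as the end-vertices of these edges); since $Orb_{\Gamma}$ commutes with unions, $H(w)=\bigcup_{i}H_{i}(w)$.

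For the edge-disjoint half of~\ref{properties:H(w):2}, I would first check that $K\mapsto K^{*}$ preserves edge-disjointness: the $\mathcal E_{1}(H_{i})$ are disjoint as subsets of the disjoint $E(H_{i})$; the $\mathcal E_{2}(H_{i})$ are disjoint because the assignment $(0,y_{2})(m-1,y_{1})\mapsto(m-1,y_{1})(m,y_{2})$ is injective and the wraparound edges of distinct $H_{i}$ are disjoint; and no $\mathcal E_{1}$-edge equals an $\mathcal E_{2}$-edge, since every $\mathcal E_{2}$-edge meets the vertex layer $m$ while every $\mathcal E_{1}$-edge joins consecutive layers inside $[0,m-1]$. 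This last point is exactly where $m\geq3$ and $w\geq2$ are needed: together they give $m-1\notin\{1,\,wm-1\}$, so no wraparound edge of $C_{m}[n]$ survives as an edge of $C_{wm}[n]$. Thus the $H_{i}^{*}$ are pairwise edge-disjoint; combining this with the fact that, for any subgraph $K$ of $C_{m}[n]$, distinct $\Gamma$-translates of $K^{*}$ are pairwise edge-disjoint (the text records this for $K=H$, and the same reasoning applies to $K=H_{i}\cup H_{j}$, whose star is $H_{i}^{*}\cup H_{j}^{*}$), the $H_{i}(w)$ are pairwise edge-disjoint.

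The structural heart of the argument is the identity
\[
V\bigl(H(w)\bigr)=\bigsqcup_{\lambda=0}^{w-1}\bigl(V(H)+\lambda m\bigr),
\]
a \emph{disjoint} union. I would prove it by noting that the ``new'' layer-$m$ vertices created inside $H^{*}$, once translated by $\lambda m$, are precisely the layer-$0$ vertices of the next translate $H^{*}+(\lambda+1)m$, which are themselves translates of the layer-$0$ vertices of $H$; hence the union of the $\Gamma$-translates of $V(H^{*})$ equals the union of the $\Gamma$-translates of $V(H)$, and the latter are pairwise disjoint since $V(H)\subseteq[0,m-1]\times\Z_{n}$ while $\Gamma$ shifts by multiples of $m$. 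Granting this, the vertex-disjoint half of~\ref{properties:H(w):2} is immediate (for fixed $\lambda$ the sets $V(H_{i})+\lambda m$ are disjoint, and for distinct $\lambda$ they lie in disjoint blocks of layers). For~\ref{properties:H(w):1}, let $H$ be $d$-regular and let $v=p_{i}$ be a vertex of $H(w)$: if $p\not\equiv0\pmod m$ then $v$ lies in a unique translate $H^{*}+\lambda m$, where it has the degree of some layer-$[1,m-1]$ vertex of $H$, namely $d$; if $p\equiv0\pmod m$ then the edges of $H(w)$ at $v$ are those contributed by $H^{*}+\lambda m$ (where $v$ is a layer-$0$ vertex, carrying the non-wraparound part of the corresponding $H$-degree) together with those contributed by $H^{*}+(\lambda-1)m$ (carrying the wraparound part), two disjoint sets whose sizes sum to $d$. (One can also package all of this by observing that the folding map $x_{i}\mapsto(x\bmod m)_{i}$ is a graph covering $C_{wm}[n]\to C_{m}[n]$ whose full preimage of $H$ is $H(w)$.) So $H(w)$ is $d$-regular.

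Finally, if $\{H_{i}\}$ is a $d$-regular factorization of $H$ --- i.e.\ pairwise edge-disjoint $d$-regular spanning subgraphs with union $H$ --- then by~\ref{properties:H(w):3}, $\bigcup_{i}H_{i}(w)=H(w)$; by the edge-disjoint part of~\ref{properties:H(w):2} the $H_{i}(w)$ are pairwise edge-disjoint, hence decompose $H(w)$; by~\ref{properties:H(w):1} each $H_{i}(w)$ is $d$-regular; and each spans $H(w)$, since $V(H_{i}(w))=\bigsqcup_{\lambda}(V(H_{i})+\lambda m)=\bigsqcup_{\lambda}(V(H)+\lambda m)=V(H(w))$. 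Hence the $H_{i}(w)$ form a $d$-regular factorization of $H(w)$. The one step I expect to be more than routine is verifying how the $\Gamma$-translates of $H^{*}$ interlock --- exactly along the layer-$0$/layer-$m$ seam, and nowhere else --- which underlies the displayed vertex identity and the stated edge-disjointness of distinct translates, and which is where $m\geq3$ and $w\geq2$ genuinely enter; everything downstream is manipulation of unions and disjoint unions.
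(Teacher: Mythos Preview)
Your proposal is correct and follows essentially the same track as the paper: both argue each of the three properties directly from the definitions of $H^{*}$ and $H(w)=Orb_{\Gamma}(H^{*})$, with a degree count at the seam layers for property~1 and a coordinate analysis for edge-disjointness in property~2. Your organization differs slightly --- you isolate the vertex identity $V(H(w))=\bigsqcup_{\lambda}(V(H)+\lambda m)$ as a central lemma and derive the vertex-disjoint half of property~2 and part of property~1 from it, whereas the paper computes degrees in $H(w)$ directly --- but the underlying reasoning is the same. Your parenthetical covering-map remark (that $H(w)$ is the full preimage of $H$ under the natural $w$-fold cover $C_{wm}[n]\to C_{m}[n]$) is a genuinely cleaner way to see all three properties at once; the paper does not take this viewpoint.

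One small imprecision worth fixing: in arguing the vertex identity you assert that the layer-$m$ vertices of $H^{*}$, translated, ``are precisely'' the layer-$0$ vertices of the next translate. That is false in general --- a vertex $(0,y)\in V(H)$ whose only $H$-neighbours lie on layer $m-1$ produces $(m,y)\in V(H^{*})$ but not $(0,y)\in V(H^{*})$, and vice versa. What is true (and what you need) is that the \emph{union} of the layer-$0$ and layer-$m$ second coordinates in $V(H^{*})$ equals $\{y:(0,y)\in V(H)\}$ when $d\geq 1$; this is exactly what the paper's degree equation $\deg_{H^{*}}(0,y)+\deg_{H^{*}}(m,y)=d$ encodes. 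The repair is one sentence, and your covering-map observation sidesteps the issue entirely.
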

\begin{proof} Let $H$ be a subgraph of $C_m[n]$, and let $H_1,\ldots, H_t$ be subgraphs of $H$.

We start by proving property 1. First, let
$(x,y)\in V(H^*)$ and note that if $x\in[1,m-1]$, then 
$deg_{H^*}(x,y) = deg_{H}(x,y) = d$; if $x\in\{0,m\}$, it is easy to see that $deg_{H^*}(0,y) + deg_{H^*}(m,y) = d$.
Now, let $(x'+\lambda m, y')\in V(H(w))$ with $x'\in[0,m-1]$ and 
$\lambda\in[0,w-1]$. Clearly,
\[
deg_{H(w)}(x'+\lambda m, y') = 
\sum_{\gamma\in\Gamma} deg_{H^*+\gamma}(x' + \lambda m, y').
\] 
Also, we have that 
$(x'+\lambda m, y')\in V(H^*+\gamma$), 
with $\gamma\in\Gamma$, if and only if $x'=0$ and 
$\gamma \in \{(\lambda m- m,0),(\lambda m,0)\}$, or 
$x'\in[1,m-1]$ and $\gamma = (\lambda m, 0)$. Therefore,
if $x'\in[1,m-1]$, then $deg_{H(w)}(x'+\lambda m, y') = 
deg_{H^*+(\lambda m,0)}(x'+\lambda m, y') = 
deg_{H^*}(x', y') = d.$
If $x'=0$, then 
\begin{align*}
    deg_{H(w)}(\lambda m, y') & = 
    deg_{H^*+(\lambda m-m,0)}(\lambda m, y') + 
    deg_{H^*+(\lambda m,0)}(\lambda m, y') \\
  & = deg_{H^*}(m, y') + 
      deg_{H^*}(0, y') = d.
\end{align*}
It follows that $H(w)$ is $d$-regular.

To prove property 2,
we start showing that if the $H_i$s are pairwise edge-disjoint, then $H^*_i$ and $H^*_j + \gamma$ are edge-disjoint whenever $i\neq j$ and $\gamma\in \Gamma$. 
Assume for a contradiction that $E(H^*_i)\ \cap\ E(H^*_j)=\emptyset$ and $E(H^*_i)\ \cap\ E(H^*_j+\gamma)\neq \emptyset$ for some $i\neq j$ and 
$\gamma=(\lambda m,0)$, with $\lambda\in[0,w-1]$. Then, there exist
$(x_1,y_1)(x_2,y_2)\in E(H^*_i)$ and $(x'_1,y'_1)(x'_2,y'_2)\in E(H^*_j)$ such that 
\[
(x_1,y_1)(x_2,y_2)= (x'_1+\lambda m,y'_1)(x'_2+\lambda m,y'_2).
\] 
Without loss of generality, we can assume that $x_u = x'_u+\lambda m$, that is, 
$x_u - x'_u = \lambda m$ for $u\in\{1,2\}$. 
Since $x_1,x_1', x_2,x_2'\in[0,m]$ and recalling that $H^*_i$ and $H^*_j$ are edge-disjoint, we necessarily have that $(x_1, x_1')=(x_2, x_2')=(m,0)$, but this is a contradiction since 
$x_1- x_2, x'_1- x'_2\in\{\pm1\}$.
It follows that $E(H^*_i+\gamma_1)\ \cap\ E(H^*_j+\gamma_2) = \emptyset$ for every $i\neq j$ and for every 
$\gamma_1, \gamma_2\in \Gamma$. Therefore, 
$H_i(w)= Orb_{\Gamma}(H_i^*)$ and $H_j(w)= Orb_{\Gamma}(H_j^*)$ are edge-disjoint for every $i\neq j$.

Similarly, one can prove that if the $H_i$s are pairwise vertex-disjoint, then 
$V(H^*_i+\gamma_1)\ \cap\ V(H^*_j+\gamma_2) = \emptyset$ for every $i\neq j$ and for every 
$\gamma_1, \gamma_2\in \Gamma$, hence the $H_i(w)$s are pairwise vertex-disjoint.

We finally prove property 3. If $H = \bigcup_{i=1}^t H_i$, it is easy to see that  $H^* = \bigcup_{i=1}^t H_i^*$, hence 
$H(w) = Orb_{\Gamma}(H^*)
     = \bigcup_{i=1}^t Orb_{\Gamma}(H_i^*)
     = \bigcup_{i=1}^t H_i(w),$
and this completes the proof.
\end{proof}

We now explore the structure of expanded cycles.
Let \linebreak $C=\big((x_0,y_0), (x_1, y_1), \ldots, (x_{\ell-1}, y_{\ell-1})\big)$ be 
an $\ell$-cycle in $C_m[n]$, where the subscripts of $x_i$ and $y_i$ are to be considered modulo $\ell$, 
and set $(x_\ell, y_\ell) = (x_0,y_0)$. 
We define a trail $T_C$ in the graph $C_{mw}[n]$ as follows:
\begin{align*}
  T_C= \left\langle
     (X_0,y_0), (X_1, y_1), \ldots, (X_{\ell-1}, y_{\ell-1}), (X_{\ell}, y_{\ell}) \right\rangle
\end{align*}
with $X_j = x_j + \lambda_j m$, where $\lambda_0 = 0$ and for every $j\in[1,\ell]$ we have that
\begin{align*}
 & \lambda_j  = \lambda_{j-1} + 
  \begin{cases}
    1 & \text{if $(x_{j-1}, x_j)=(m-1,0)$}, \\
   -1 & \text{if $(x_{j-1}, x_j)=(0,m-1)$}, \\ 
    0 & \text{otherwise}.  
  \end{cases}
\end{align*}
Furthermore,
we define the parameter $\epsilon(C)$ as follows:
\small
\[
\epsilon(C) = \bigg|
\left|\{i\in \Z_\ell\mid (x_i, x_{i+1})=(m-1,0)\}\big|-
\right|\{i\in \Z_\ell\mid (x_i, x_{i+1})=(0, m-1)\}\big|
\bigg|
.
\]
\normalsize
For example, for the cycle $H$ given in Figure~\ref{H}, $\epsilon(H)=3$ and each of the cycles in 
Figure~\ref{H(3)} can be seen as a trail $T_H$ arising from $H$. Note that
$\lambda_\ell = \epsilon(C)$, therefore $X_{\ell} = X_0 + \epsilon(C) m$.

The trail $T_C$ and $\epsilon(C)$ are used in the following lemma to determine the length of the cycles
in $C(w)$.
\begin{lemma}\label{lengthC(w)}
  If $C$ is an $\ell$-cycle of $C_m[n]$, then for every $w \geq 1$, $C(w)$ consists of $u$ vertex-disjoint cycles of length $k$, where $u=\gcd(\epsilon(C), w)$ and $k=\ell w/u$.
\end{lemma}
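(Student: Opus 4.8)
The plan is to trace a single vertex of $C(w)$ around the graph and show that the cycle it lies on has length exactly $k = \ell w/u$, and then count how many such cycles there are. Recall that $C(w) = Orb_\Gamma(C^*)$ with $\Gamma = m\Z_{wm}\times\{0\}$, and that $C(w)$ is $2$-regular by Lemma~\ref{properties:H(w)}\eqref{properties:H(w):1}; hence $C(w)$ is a vertex-disjoint union of cycles, and it suffices to determine their common length (the fact that they all have the same length will fall out of the vertex-transitivity of the construction under $\Gamma$, since $\Gamma$ acts on $C(w)$ permuting its cycles). So the real content is: starting from a fixed vertex of $C(w)$, how many edges do we traverse before returning?

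First I would make precise the relationship between $C(w)$ and the trail $T_C$. The trail $T_C = \langle (X_0,y_0),\dots,(X_\ell,y_\ell)\rangle$ defined just before the lemma is a walk in $C_{mw}[n]$ obtained by ``unfolding'' $C$: each time $C$ crosses the seam $(m-1)\to 0$ we shift the first coordinate up by $m$, each time it crosses $0\to m-1$ we shift down by $m$, and otherwise the edge of $C$ lifts verbatim (an edge of $C$ not crossing the seam is an edge of $C^*$, while a seam edge $(m-1,y_1)(0,y_2)$ of $C$ becomes the edge $(m-1+\lambda m,y_1)(m+\lambda m,y_2) = (m-1+\lambda m,y_1)(0+(\lambda+1)m,y_2)$ of $C_{mw}[n]$). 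I would check that the edge set of $T_C$, together with all its $\Gamma$-translates, is exactly $E(C^*)$ translated by $\Gamma$, i.e. $E(C(w)) = Orb_\Gamma(E(T_C))$; this is essentially the observation that $C^*$ is $T_C$ read modulo the identification $(x,y)\sim(x+m,y)$, together with $|E(C(w))| = w\ell$ from the remarks preceding the lemma. Consequently the cycle of $C(w)$ through $(X_0,y_0)=(x_0,y_0)$ is obtained by following $T_C$ and, whenever we reach its end $(X_\ell,y_\ell) = (x_0 + \epsilon(C)m,\, y_0)$ — which is the vertex $(x_0,y_0)$ shifted by $\epsilon(C)m \in \Gamma$ — continuing with the $\Gamma$-translate $T_C + (\epsilon(C)m,0)$, and so on.

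Next I would run this process to closure. After $r$ concatenated copies of $T_C$ (the $j$-th being $T_C + (j\epsilon(C)m,0)$ for $j=0,\dots,r-1$), we have traversed $r\ell$ edges and arrived at $(x_0 + r\epsilon(C)m,\ y_0)$, with first coordinate read in $\Z_{wm}$. The walk closes up precisely when $r\epsilon(C)m \equiv 0 \pmod{wm}$, i.e. $r\epsilon(C)\equiv 0 \pmod w$, and the smallest such positive $r$ is $r = w/\gcd(\epsilon(C),w) = w/u$. (Here one must also note that the walk cannot close earlier, \emph{within} a partial copy of $T_C$: the $y$-coordinate sequence of $T_C$ is exactly that of $C$, which does not repeat $(x_0,y_0)$ until step $\ell$ since $C$ is an $\ell$-cycle, and more carefully one checks the lifted vertices are distinct — this uses that the $X_j$ live in $[0,m]$ plus a well-defined multiple of $m$, so two lifted vertices coincide only if the corresponding vertices of $C$ coincide and the shifts agree.) Hence the cycle through $(x_0,y_0)$ has length $k = r\ell = \ell w/u$. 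Since $C(w)$ has $w\ell$ edges and is $2$-regular, it is a union of $w\ell / k = u$ cycles, each of length $k$. I would close by remarking that the translates of this cycle by $\Gamma$ (a group of order $w$) give all $u$ cycles, each stabilized by the subgroup of $\Gamma$ of order $w/u$ generated by $(\epsilon(C)m,0)\cdot(\text{appropriate factor})$, which re-confirms uniformity.

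The main obstacle I anticipate is the bookkeeping in showing the walk does not close prematurely and that the lifted vertices encountered are genuinely distinct — in other words, rigorously justifying that concatenating $T_C$ with its shifts really does produce a \emph{cycle} (a closed trail with no repeated vertices) rather than a closed walk that might revisit a vertex early. The degenerate cases $\epsilon(C)=0$ (so $u = \gcd(0,w) = w$, giving $w$ copies of $C$ itself — consistent with $C \subseteq C_m[n]$ lifting trivially) and $w=1$ (so $u = \gcd(\epsilon(C),1)=1$, $k=\ell$, recovering $C(1)=C$) should be checked to make sure the gcd formula degrades gracefully. Everything else is a direct unwinding of the definitions of $C^*$, $H(w)$, $T_C$, and $\epsilon(C)$.
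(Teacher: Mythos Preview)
Your proposal is correct and follows essentially the same approach as the paper: you establish $C(w)=Orb_\Gamma(T_C)$ via an edge-count, then determine the cycle length by concatenating $\Gamma$-translates of $T_C$ until the shift $r\epsilon(C)m$ vanishes modulo $wm$, giving $r=w/u$ and length $\ell w/u$. The paper handles the ``no premature closing'' concern you flag by the one-line observation that $(X_j,y_j)\neq(X_{j'},y_{j'})$ for $0\le j\neq j'\le\ell-1$, and otherwise the arguments match.
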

\begin{proof} 
Let $T=T_C$ be a trail arising from $C$ as described above.
It is not difficult to check that 
$(X_j,y_j)\neq (X_{j'},y_{j'})$ whenever $0\leq j\neq j'\leq \ell-1$. Therefore, the trail 
$T$ is either a cycle or a path according to whether $X_\ell=X_0$, or not. 

To prove that $C(w)=Orb_\Gamma(T)$, we start by showing that if 
$e\in E(C^*)$, then $e\in E(T+\gamma)$, for some $\gamma\in\Gamma$. 
Let $e\in E(C^*)$ and consider the following cases.
\begin{enumerate}
  \item If $e\in E(C)$, then $e=(x_{j-1}, y_{j-1})(x_{j}, y_{j})$ for some $j\in [1,\ell]$, and
  $(m-1,0)\neq (x_{j-1},x_{j})\neq(0,m-1)$. It follows that $\lambda_{j}= \lambda_{j-1}$,
  hence $e+(\lambda_{j}m,0) = (X_{j-1}, y_{j-1})(X_{j}, y_{j})\in T$, that is,
  $e\in E(T-(\lambda_{j}m,0))$.
  
  \item If $e\not\in E(C)$, then one of the following two cases holds.
  
  \textbf{Case 1:} $e=(x_{j-1}, y_{j-1})(x_j+m, y_{j})$ with $(x_{j-1}, x_{j}) = (m-1,0)$, 
    for some $j\in [1,\ell]$. In this case, $\lambda_j = \lambda_{j-1}+1$. Hence, 
    $e+(\lambda_{j-1}m,0) = (X_{j-1}, y_{j-1})(X_{j}, y_{j})\in T$, that is,
    $e\in E(T-(\lambda_{j-1}m,0))$.
   
  \textbf{Case 2:} $e=(x_{j-1}+m, y_{j-1})(x_j, y_{j})$ with $(x_{j-1}, x_{j}) = (0, m-1)$,
    for some $j\in [1,\ell]$. Therefore, $\lambda_j = \lambda_{j-1}+1$, and
    $e+(\lambda_{j}m,0) = (X_{j-1}, y_{j-1})(X_{j}, y_{j})\in T$, that is, 
    $e\in E(T-(\lambda_{j}m,0))$. 
\end{enumerate}
It then follows that $Orb_{\Gamma}(e)\subseteq Orb_{\Gamma}(T)$ for every $e\in C^*$, hence
\[ C(w) = Orb_{\Gamma}(C^*) = \bigcup_{e\in E(C^*)} Orb_\Gamma(e) \subseteq Orb_{\Gamma}(T).
\]
Note that
$|E(C(w))|\leq |E(Orb_{\Gamma}(T))| \leq w|E(T)| = w\ell = |E(C(w))|$, therefore 
$C(w)=Orb_\Gamma(T)$.

By Lemma \ref{properties:H(w)}.(\ref{properties:H(w):1}),
we know that $C(w)$ is $2$-regular. Hence, it is left to show that 
$C(w)$ consists only of cycles of length $\ell w/u$, where 
$u=\gcd(\epsilon(C), w)$.
Considering that $\lambda_\ell = \epsilon(C)$, it follows that $X_{\ell} = X_0 + \epsilon(C) m$. Hence every cycle of $Orb_\Gamma(T)$ is made of $p$ copies of $T$ where $p$ is the first positive integer such that 
$p\epsilon(C) \equiv 0 \pmod{w}$, that is, $p=w/u$.
Thus, each cycle has length $\ell p = \ell w /u$.
\end{proof}

\begin{lemma} \label{espilonCodd}
  Let $C$ be an $\ell$-cycle in $C_m[n]$. If $\ell$ is odd, then $\epsilon(C)$ is odd. 
\end{lemma}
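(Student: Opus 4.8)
The plan is to use the lift $X_j=x_j+\lambda_j m$ introduced just above (for the trail $T_C$), but now read over $\Z$. The point is that the integers $\lambda_j$ were defined precisely so as to absorb the two ``wrap-around'' transitions $(m-1,0)$ and $(0,m-1)$, with the effect that \emph{every} increment of the lifted sequence is $\pm1$: for $j\in[1,\ell]$, set $\delta_j:=X_j-X_{j-1}$; I claim $\delta_j\in\{+1,-1\}$. Indeed, if $(x_{j-1},x_j)$ is neither $(m-1,0)$ nor $(0,m-1)$, then necessarily $x_j-x_{j-1}=\pm1$ (these are the only other possibilities for the first coordinates of an edge of $C_m[n]$ when $m\geq 3$) while $\lambda_j=\lambda_{j-1}$, so $X_j-X_{j-1}=\pm1$; if $(x_{j-1},x_j)=(m-1,0)$ then $x_j-x_{j-1}=1-m$ and $\lambda_j-\lambda_{j-1}=1$, so $X_j-X_{j-1}=1$; and if $(x_{j-1},x_j)=(0,m-1)$ then $x_j-x_{j-1}=m-1$ and $\lambda_j-\lambda_{j-1}=-1$, so $X_j-X_{j-1}=-1$.

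Next I would telescope these increments around the cycle. Since $x_\ell=x_0$ and $\lambda_0=0$, we have $X_0=x_0$ and $X_\ell=x_0+\lambda_\ell m$, whence
\[
\sum_{j=1}^{\ell}\delta_j \;=\; X_\ell-X_0 \;=\; \lambda_\ell\,m .
\]
Now I read off two facts from this identity. On the one hand, $\lambda_\ell=\big|\{i\in\Z_\ell:(x_i,x_{i+1})=(m-1,0)\}\big|-\big|\{i\in\Z_\ell:(x_i,x_{i+1})=(0,m-1)\}\big|$ by the recursion defining $\lambda_j$, so $|\lambda_\ell|=\epsilon(C)$ and the right-hand side has absolute value $\epsilon(C)\,m$. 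On the other hand, the left-hand side is a sum of $\ell$ terms each equal to $\pm1$, hence is congruent to $\ell$ modulo $2$; so if $\ell$ is odd, then $\sum_{j=1}^{\ell}\delta_j$ is odd. Therefore $\epsilon(C)\,m$ is odd, which forces both $m$ and $\epsilon(C)$ to be odd; in particular $\epsilon(C)$ is odd, as required. (As a sanity check, this also reproves that $C_m[n]$ contains no odd cycle when $m$ is even, which is anyway clear since $C_m[n]$ is bipartite in that case.)

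The only step requiring genuine care is the first one: verifying $X_j-X_{j-1}\in\{\pm1\}$ in each of the transition types, i.e.\ checking that the recursion defining $\lambda_j$ cancels exactly the $\pm(m-1)$ jumps and nothing else, together with pinning down $|\lambda_\ell|=\epsilon(C)$ while keeping straight which of the two wrap counts is the larger. Once this bookkeeping is in place the conclusion is a one-line parity argument, so I do not anticipate any real obstacle.
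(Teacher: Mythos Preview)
Your proof is correct, and it is a genuinely different (and arguably cleaner) route than the paper's. The paper argues as follows: since $C_m[n]$ is bipartite for even $m$, an odd cycle forces $m$ odd; then, letting $\mathcal{E}$ be the set of edges of $C$ with both first coordinates in $\{0,m-1\}$, one has $\epsilon(C)\equiv|\mathcal{E}|\pmod 2$, and the components of $C\setminus\mathcal{E}$ are paths in $P_{m-1}[n]$ with both endpoints in $\{0,m-1\}\times\Z_n$, hence each of even length (this uses $m-1$ even); summing, $\ell\equiv|\mathcal{E}|\pmod 2$ is odd. Your argument instead recycles the lift $X_j=x_j+\lambda_j m$ already set up for the trail $T_C$, observes that the recursion for $\lambda_j$ is designed precisely so that every step $X_j-X_{j-1}$ equals $\pm1$, and telescopes to $\lambda_\ell m\equiv\ell\pmod 2$. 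This buys you two things: you avoid introducing the auxiliary decomposition into $\mathcal{E}$ and paths, and you obtain ``$m$ odd'' as an automatic consequence rather than a separate preliminary. The paper's approach, on the other hand, is self-contained and does not rely on the $T_C$ machinery. Both are short parity arguments; yours integrates more tightly with the surrounding section.
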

\begin{proof}
 We note that when $m$ is even, then $C_m[n]$ is bipartite. Since, by assumption,
$C_m[n]$ contains a cycle $C$ of odd length $\ell$, then $m$ must be odd.
   Now, set 
  $\mathcal{E}=\big\{(x,y)(x',y')\in E(C)\mid x,x'\in\{0,m-1\}\big\}$.
  It is clear that $\epsilon(C)\equiv |\mathcal{E}| \pmod{2}$ and the components of $C\setminus \mathcal{E}$ are paths, which we denote by $Q_1, Q_2, \ldots, Q_t$, of lengths $\ell_1, \ldots, \ell_t$, respectively. Considering that $Q_i$ is a subgraph of $P_{m-1}[n]$, and its ends lie in $\{0,m-1\}\times \Z_n$, 
    it is easy to see that $\ell_i \equiv m-1 \equiv 0$ (mod 2) for every $i \in [1,t]$.  
Since $\ell = |\mathcal{E}|+\ell_1 + \cdots+ \ell_t$ is odd, it follows that $|\mathcal{E}|$ is odd, and this proves the assertion.
\end{proof}

\begin{theorem} \label{ASSW extension}
Let $\ell_1, \ell_2,m$ and $n$ be positive integers with $\ell_1, \ell_2, m\geq 3$ odd, and let $S$ be a subset of $\Z_n$. If HWP$(C_m[S]; \ell_1, \ell_2;$ $\alpha, \beta)$ has a solution, 
then there is a solution to  HWP$(C_{2^tm}[S]; 2^t\ell_1, 2^t\ell_2; \alpha, \beta)$.
\end{theorem}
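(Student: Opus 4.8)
The plan is to take $w=2^t$ and push the hypothesised factorization through the expansion operation $H\mapsto H(w)$, then read off the new cycle lengths from Lemmas~\ref{properties:H(w)}, \ref{lengthC(w)} and~\ref{espilonCodd}. Write the assumed solution to $\hwp(C_m[S];\ell_1,\ell_2;\alpha,\beta)$ as $\cF=\{F_1,\dots,F_\alpha,G_1,\dots,G_\beta\}$, where each $F_i$ is a $C_{\ell_1}$-factor and each $G_j$ a $C_{\ell_2}$-factor of $H:=C_m[S]$.

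First I would check that the expansion is compatible with restricting to differences in $S$, i.e.\ that $H(w)=C_{2^tm}[S]$. This follows from the identities $C_m[n]^*=P_m[n]$ and $(C_m[n])(w)=C_{wm}[n]$ recorded before Lemma~\ref{properties:H(w)}, together with Lemma~\ref{properties:H(w)}.(\ref{properties:H(w):2})--(\ref{properties:H(w):3}) applied to the edge-disjoint decomposition $C_m[n]=C_m[S]\oplus C_m[\Z_n\setminus S]$: one observes that every edge of $H^*$ (whether it arises from $\cE_1$ or from $\cE_2$), and every $\Gamma$-translate of such an edge, has the same difference as the edge of $C_m[S]$ it comes from, so $H(w)\subseteq C_{2^tm}[S]$; equality then follows by comparing with the analogous inclusion for $\Z_n\setminus S$. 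This bookkeeping is the only point in the argument that requires any care.

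Next, since $\cF$ is a $2$-regular factorization of $H$, the concluding sentence of Lemma~\ref{properties:H(w)} gives that $\cF(w):=\{F_1(w),\dots,F_\alpha(w),G_1(w),\dots,G_\beta(w)\}$ is a $2$-regular factorization of $H(w)=C_{2^tm}[S]$; thus each member of $\cF(w)$ is a $2$-factor of $C_{2^tm}[S]$, and it remains only to determine its cycle lengths. Fix a $C_{\ell_1}$-factor $F_i$ and write it as a vertex-disjoint union of $\ell_1$-cycles $C^{(1)},\dots,C^{(s)}$. By Lemma~\ref{properties:H(w)}.(\ref{properties:H(w):2})--(\ref{properties:H(w):3}), $F_i(w)$ is the vertex-disjoint union $\bigcup_{k} C^{(k)}(w)$. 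Because $\ell_1$ is odd, Lemma~\ref{espilonCodd} makes each $\epsilon(C^{(k)})$ odd, whence $\gcd(\epsilon(C^{(k)}),w)=\gcd(\epsilon(C^{(k)}),2^t)=1$, and Lemma~\ref{lengthC(w)} then says $C^{(k)}(w)$ is a single cycle of length $\ell_1 w=2^t\ell_1$. Hence $F_i(w)$ is a disjoint union of $2^t\ell_1$-cycles, and being a $2$-factor of $C_{2^tm}[S]$ it is a $C_{2^t\ell_1}$-factor; the same argument shows each $G_j(w)$ is a $C_{2^t\ell_2}$-factor. Therefore $\cF(w)$ solves $\hwp(C_{2^tm}[S];2^t\ell_1,2^t\ell_2;\alpha,\beta)$.

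The substantive work has already been done in the three lemmas; in particular, the joint use of Lemmas~\ref{lengthC(w)} and~\ref{espilonCodd} is exactly what forces $w$ to be a power of $2$ — for general $w$ the gcd need not be $1$, so short cycles can arise, as flagged in the discussion preceding the construction. Within the present proof, the only obstacle worth naming is the compatibility check $H(w)=C_{2^tm}[S]$ above; the rest is direct application of the lemmas.
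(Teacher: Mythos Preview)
Your proof is correct and follows essentially the same approach as the paper's: apply the expansion $H\mapsto H(2^t)$ to the given factorization, use Lemma~\ref{properties:H(w)} to obtain a $2$-factorization of $C_{2^tm}[S]$, and then invoke Lemmas~\ref{espilonCodd} and~\ref{lengthC(w)} to identify the cycle lengths. The paper simply asserts $(C_m[S])(2^t)=C_{2^tm}[S]$ without comment, so your explicit verification of this compatibility is a welcome addition rather than a deviation.
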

\begin{proof} 
  Let $\mathcal{F}=\{F_1, F_2, \ldots, F_s\}$ be a solution to 
  HWP$(C_m[S]; \ell_1, \ell_2;$ $\alpha, \beta)$, where $s=|S|$, and denote by 
  $C_{i,1}, C_{i,2}, \ldots, C_{i,t_i}$ the components of $F_i$, for $i\in[1,s]$.
  By Lemma \ref{properties:H(w)},
	$\mathcal{F'}=\{F_1(2^t), F_2(2^t), \ldots, F_s(2^t)\}$ is a $2$-factorization of $(C_m[S])(2^t)=C_{2^tm}[S]$.
  
  By Lemma \ref{espilonCodd}, $\epsilon(C_{ij})$ is odd; hence, by Lemma 
  \ref{lengthC(w)}, we have that $C_{ij}(2^t)$ is a cycle of length $2^t|C_{ij}|$. Finally, by Lemma \ref{properties:H(w)}.(\ref{properties:H(w):2}), we have that the $C_{ij}(2^t)$s are the components of $F_i(2)$, for $i\in[1,s]$.
  It follows that $\mathcal{F}'$ has as many $C_{2^t\ell_u}$-factors as the $C_{\ell_u}$-factors of 
  $\mathcal{F}$. Therefore, $\mathcal{F}'$ is a solution to HWP$(C_{2^tm}[S]; 2^t\ell_1, 2^t\ell_2; \alpha, \beta)$.
\end{proof}

\section{Constructing 2-factorizations of blown-up cycles} 
\label{Section blown up cycle}

Let $n > m \geq 3$ be odd integers with $m \nmid n$, and let $g=\gcd(m,n)$.  We first prove a result that is a consequence of Theorem~\ref{ASSW extension} and the following theorem, which is a special case of Theorem 1.4 of~\cite{GOP}, taking $t=1,2$.

\begin{theorem}[\cite{GOP}] \label{GOP theorem}
Let $1 \leq m' < n' \leq N$ be odd integers such that $m'$ and $n'$ are divisors of $N$.  Then $\hwp(C_g[N];gm',gn';\alpha,\beta)$ has a solution whenever $g \geq 3$, $\alpha+\beta=N$ and $\alpha,\beta \neq 1$.
\end{theorem}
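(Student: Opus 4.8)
Theorem~\ref{GOP theorem} is not proved in this paper but quoted from~\cite{GOP}: it is the special case $t\in\{1,2\}$ of Theorem~1.4 there, so the economical route is simply to invoke that reference. For a self-contained account one would argue as follows. View $C_g[N]\cong\mathrm{Cay}(\Z_g\times\Z_N,\{1\}\times\Z_N)$ and, for $D\subseteq\Z_N$, write $C_g[N,D]=\mathrm{Cay}(\Z_g\times\Z_N,\{1\}\times D)$ for the $2|D|$-regular spanning subgraph carrying the edges whose differences lie in $D$. A single difference gives a $2$-factor: $C_g[N,\{d\}]$ is the Cayley graph generated by $(1,d)$, hence a disjoint union of cycles all of length $\mathrm{lcm}\big(g,\,N/\gcd(d,N)\big)$. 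Since $C_g[N]=\bigoplus_{d\in\Z_N}C_g[N,\{d\}]$ is a $2$-factorization into $N$ uniform $2$-factors and $\alpha+\beta=N$, the task reduces to partitioning the $N$ difference-classes into blocks $D_1,\dots,D_r$ so that each $C_g[N,D_i]$ admits a uniform $2$-factorization into $C_{gm'}$-factors or into $C_{gn'}$-factors, with exactly $\alpha$ factors of the first length and $\beta$ of the second appearing across all the blocks.

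The factorizations of the individual blocks are then obtained by reducing to known uniform results. When the target length is $gN$ (that is, when $N=m'$ or $N=n'$), choosing blocks built from consecutive differences makes the corresponding $4$-regular Cayley subgraphs connected, so Theorem~\ref{4-reg} yields the required Hamilton decompositions. In the general case one selects the blocks so that each $C_g[N,D_i]$ is isomorphic to a graph to which the uniform Oberwolfach/Liu theory applies---a complete equipartite graph (as is already the case when $g=3$, where $C_3[N]=K_3[N]$), a circulant, or a smaller blown-up cycle---and invokes Theorem~\ref{liu} and Theorem~\ref{uniform OP}. Here the hypotheses $m'\mid N$ and $n'\mid N$ are precisely what make $gm'$ and $gn'$ admissible cycle lengths in the relevant graphs, while the assumed oddness of $g,m',n'$ disposes of the parity requirement (condition~(2) of Theorem~\ref{liu}) and keeps the argument clear of Liu's four exceptional triples.

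The genuine difficulty is controlling the cycle lengths \emph{exactly} rather than merely up to a divisor: a carelessly chosen block produces $2$-factors whose cycles have length $\mathrm{lcm}\big(g,\,N/\gcd(d,N)\big)$, which is seldom $gm'$, so the blocks must be engineered so that the cycles of each induced Cayley subgraph close up after exactly $m'$ (respectively $n'$) traversals of the base $g$-cycle. Entangled with this is the bookkeeping needed to land on the prescribed pair $(\alpha,\beta)$, not merely on some feasible pair: this is where the hypothesis $\alpha,\beta\ne1$ enters---a lone factor of one length leaves no slack to rebalance the blocks---and it is also where a handful of small configurations ($g=3$, or $m'=1$, or $\alpha$ or $\beta$ equal to $0$ or $2$) have to be handled by ad hoc constructions. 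I expect this exact-length control, rather than the reductions to Theorems~\ref{liu} and~\ref{4-reg}, to be the real obstacle.
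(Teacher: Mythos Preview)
Your opening observation is exactly right: the paper does not prove Theorem~\ref{GOP theorem} at all. It is stated with attribution to~\cite{GOP} and introduced explicitly as ``a special case of Theorem~1.4 of~\cite{GOP}, taking $t=1,2$''; there is no accompanying argument in the paper, and none is needed for the paper's purposes.

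Your subsequent sketch of how one might prove it independently is therefore not comparable to anything in the paper. As a sketch it is reasonable in spirit---partitioning the difference set of $C_g[N]$ and filling each block with uniform factors is indeed the natural line of attack---but it remains a sketch rather than a proof. In particular, the passage ``one selects the blocks so that each $C_g[N,D_i]$ is isomorphic to a graph to which the uniform Oberwolfach/Liu theory applies'' is doing all the work without saying how, and your own final paragraph correctly identifies that controlling the cycle lengths exactly (not merely up to a divisor) is the real obstacle you have not resolved. If you actually want a self-contained proof, you would need to consult~\cite{GOP} directly; the constructions there are more delicate than the outline you give, and the exceptional cases $\alpha,\beta\in\{0,2\}$ and small $g$ require genuine work.
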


We now apply Theorem~\ref{ASSW extension} to obtain the following result.  

\begin{theorem} \label{C_{2g}[N]}
Let $3 \leq m < n$ be odd integers, let $g=\gcd(m,n)$, $m'=m/g$ and $n'=n/g$.  If $g>1$, $m' \mid N$ and $n' \mid N$, then $\hwp(C_{2g}[N];2m,2n;\alpha,\beta)$ has a solution whenever $\alpha+\beta=N$ and $\alpha,\beta \neq 1$.
\end{theorem}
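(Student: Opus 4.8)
The plan is to obtain the desired factorization of $C_{2g}[N]$ as a ``doubled'' version of a factorization of $C_g[N]$: Theorem~\ref{GOP theorem} supplies the latter, and Theorem~\ref{ASSW extension} (with $t=1$) performs the doubling. Concretely, I would first produce a solution to $\hwp(C_g[N];m,n;\alpha,\beta)$ and then feed it into Theorem~\ref{ASSW extension} to get a solution to $\hwp(C_{2g}[N];2m,2n;\alpha,\beta)$.

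For the first step, I would verify the hypotheses of Theorem~\ref{GOP theorem} with the same $N$ and with $m'=m/g$, $n'=n/g$. Since $m$ and $n$ are odd, so are $g=\gcd(m,n)$, $m'$ and $n'$; since $g>1$ and $g$ is odd, $g\geq 3$; since $m<n$ we have $1\leq m'<n'$; and since $m'\mid N$ and $n'\mid N$ (so in particular $n'\leq N$), all divisibility and size requirements are met. Noting $gm'=m$ and $gn'=n$, Theorem~\ref{GOP theorem} gives a solution to $\hwp(C_g[N];m,n;\alpha,\beta)$ whenever $\alpha+\beta=N$ and $\alpha,\beta\neq 1$.

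For the second step, I would take $S=\Z_N$, so that $C_g[S]=C_g[\Z_N,\Z_N]=C_g[N]$ (the connection set $\{1\}\times\Z_N$ recovers the full lexicographic product). Then, in the notation of Theorem~\ref{ASSW extension}, $\ell_1=m$, $\ell_2=n$, and the modulus playing the role of ``$m$'' there is $g$; all three of $m$, $n$, $g$ are odd and at least $3$. Applying Theorem~\ref{ASSW extension} with $t=1$ to the solution of $\hwp(C_g[\Z_N];m,n;\alpha,\beta)$ from the first step yields a solution to $\hwp(C_{2g}[\Z_N];2m,2n;\alpha,\beta)$, that is, to $\hwp(C_{2g}[N];2m,2n;\alpha,\beta)$, under the stated conditions on $\alpha$ and $\beta$.

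Since both ingredients are already in hand, there is no substantive obstacle: the theorem is essentially a composition of Theorems~\ref{GOP theorem} and~\ref{ASSW extension}. The only points requiring (minor) care are the bookkeeping of $C_g[S]$ versus $C_g[N]$ when $S=\Z_N$, checking that the ordering/divisibility hypotheses $1\le m'<n'$, $m'\mid N$, $n'\mid N$ transfer correctly, and the observation that $g$ being odd and greater than $1$ forces $g\geq 3$ — which is exactly the lower bound Theorem~\ref{GOP theorem} requires.
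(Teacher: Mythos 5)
Your proposal is correct and is exactly the paper's argument: the paper obtains this theorem by composing Theorem~\ref{GOP theorem} (giving a solution to $\hwp(C_g[N];m,n;\alpha,\beta)$, since $g$ odd and $g>1$ forces $g\geq 3$) with Theorem~\ref{ASSW extension} applied with $t=1$ and $S=\Z_N$. Your hypothesis checks (oddness of $g,m',n'$, the bound $g\geq 3$, and $m'\mid N$, $n'\mid N$) are precisely the bookkeeping needed, so there is nothing to add.
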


In the remainder of this section, we will solve $\hwp(C_{2m/g}[n];2m,2n;\alpha,\beta)$, except possibly when $\beta \in \{1,3\}$, or $\alpha=1$ and $g>1$.  
We first recall the following result from~\cite{BDT2}.  
\begin{theorem}[\cite{BDT2}] \label{C_{m/g} theorem}
Let $m$ and $n$ be odd integers with $n>m \geq 3$, let $g \neq m$ be a common divisor of $m$ and $n$, and let $\alpha$ and $\beta$ be nonnegative integers.  There is a solution to $\hwp(C_{m/g}[n];m,n;\alpha,\beta)$ whenever $\alpha+\beta=n$, except possibly if $\beta \in \{1,3\}$ or if one of the following conditions hold:
\begin{enumerate}
\item $g=1$ and either $(\alpha,\gcd(m!,n))=(2,1)$ or $(\alpha,m)=(4,3)$;
\item $g>1$ and $\alpha=1$.
\end{enumerate}
\end{theorem}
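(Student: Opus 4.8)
The plan is to work inside the Cayley description $C_{m/g}[n]=C_M[n]\cong\mathrm{Cay}(\Z_M\times\Z_n,\{1\}\times\Z_n)$ with $M=m/g$, and to build the factorization block by block over a partition of the connection set. Since $C_M[n]=\bigoplus_{d\in\Z_n}C_M[\{d\}]$ and, more generally, $C_M[S]$ is $2|S|$-regular for $S\subseteq\Z_n$, any family of edge-disjoint factorizations of the graphs $C_M[S_i]$ over a partition $\Z_n=\bigsqcup_i S_i$ assembles into a $2$-factorization of $C_M[n]$; I will choose the blocks $S_i$ and the cycle structure on each so that the totals are $\alpha$ $C_m$-factors and $\beta$ $C_n$-factors. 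The degenerate cases $\alpha=0$ and $\beta=0$ ask only for a uniform $C_n$- or $C_m$-factorization of $C_M[n]$, which Theorem~\ref{liu} supplies after checking its divisibility and exceptional conditions; so assume $\alpha,\beta\geq 1$.

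A single difference $d$ generates cycles of length $\mathrm{lcm}(M,n/\gcd(d,n))$; choosing $\gcd(d,n)=n/g$ gives length $\mathrm{lcm}(M,g)$, which equals $m$ precisely when $\gcd(M,g)=1$, so in that case each such $d$ furnishes a ready-made $C_m$-factor, and otherwise a short combination of differences does. The core of the argument is the divisible subcase $M\mid n$: then $n/M$ is an odd divisor of $n$ exceeding $g$, and taking width $M$ with cycle lengths $M\cdot g$ and $M\cdot(n/M)$ puts the problem squarely in the scope of Theorem~\ref{GOP theorem} (note $M\geq 3$ always, since $g$ is a proper divisor of the odd integer $m$), which settles all $\alpha,\beta\geq 2$. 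This leaves the generic case $M\nmid n$ together with the small values of $\alpha$ and $\beta$.

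The hard part will be the $C_n$-factors when $M\nmid n$. A cycle arising homogeneously, as a coset of a subgroup through which the column projection is onto, meets every column the same number of times, which would force $M\mid n$; hence when $M\nmid n$ a $C_n$-factor cannot come from any difference/coset construction, and its $M$ cycles must cover the columns unevenly, as in the patterns $(2,2,1),(2,1,2),(1,2,2)$ already forced for $C_3[5]$, so such factors must be built explicitly. My plan is to assemble a small stock of these mixed $C_n$-factors, together with matching $C_m$-factors on the same blocks of differences, as base solutions on $C_M[n_0]$ for seed values $n_0$, and then to propagate them to arbitrary $n$ by a product construction along a factorization $n=n_0n_1$: uniform factorizations of the complete equipartite links (Theorem~\ref{liu}) and Hamilton decompositions of connected $4$-regular Cayley graphs (Theorem~\ref{4-reg}) splice the layers while preserving cycle lengths and adjusting the counts, with $\alpha+\beta=n$ maintained throughout.

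The residual obstacle, and the origin of the exceptional list, is the regime of small $\alpha$ or few $C_n$-factors. With only one or a handful of factors of a given type there is no room for the product construction to correct counts, and the admissible mixed configurations tend to come in even-sized families; this is exactly what makes $\beta\in\{1,3\}$, and (for $g>1$) $\alpha=1$, resist the general method, while the $g=1$ anomalies $(\alpha,\gcd(m!,n))=(2,1)$ and $(\alpha,m)=(4,3)$ are the minimal seeds for which no admissible configuration exists. I would dispatch the small but solvable values, notably $\alpha=1$ when $g=1$ and $\alpha\in\{2,4\}$ outside those two anomalies, by direct ad hoc starters, and finally verify that the assembled cases cover every $(\alpha,\beta)$ with $\alpha+\beta=n$ outside the stated exceptions. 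I expect essentially all the difficulty to lie in this small-parameter analysis and in engineering the base mixed $C_n$-factors, rather than in the bulk algebraic construction.
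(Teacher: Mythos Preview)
This theorem is not proved in the present paper at all: it is quoted as a known result from \cite{BDT2} and used as input, with no argument supplied here. There is therefore no proof in the paper against which to compare your proposal.

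As for the proposal itself, it is a strategy outline rather than a proof, and you say as much. The reduction of the subcase $M\mid n$ (with $M=m/g$) to Theorem~\ref{GOP theorem} is clean, but note that Theorem~\ref{GOP theorem} is itself quoted from a companion paper \cite{GOP}; you should check that no circularity with \cite{BDT2} is introduced. The substantive case is $M\nmid n$, and there every hard step is deferred: the ``small stock of mixed $C_n$-factors'', the ``product construction along a factorization $n=n_0n_1$'', and the ``direct ad hoc starters'' for $\alpha\in\{1,2,4\}$ are all promised but not constructed. Your observation that a $C_n$-factor of $C_M[n]$ cannot arise from a single coset when $M\nmid n$ is correct, but it does not produce such factors; building explicit $C_n$-factorizations of $C_M[n,S]$ for well-chosen $S$ (the odd-length analogues of Theorem~\ref{C_n factors} and Lemma~\ref{C_{2n}-factors} here) is exactly the technical core of \cite{BDT2}, and nothing in your outline substitutes for it. Until those constructions and the small-$\alpha$ starters are actually written down, what you have is a plausible plan of attack, not a proof.
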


Combining Theorem~\ref{C_{m/g} theorem} with Theorem~\ref{ASSW extension} gives the following:
\begin{theorem}\label{C_{m/g} extended}
Let $m$ and $n$ be odd integers with $n>m \geq 3$, let $g \neq m$ be a common divisor of $m$ and $n$, and let $\alpha$ and $\beta$ be nonnegative integers.  There is a solution to $\hwp(C_{2m/g}[n];2m,2n;\alpha,\beta)$ whenever $\alpha+\beta=n$, except possibly if $\beta \in \{1,3\}$ or if one of the following conditions hold:
\begin{enumerate}
\item $g=1$ and either $(\alpha,\gcd(m!,n))=(2,1)$ or $(\alpha,m)=(4,3)$;
\item $g>1$ and $\alpha=1$.
\end{enumerate}
\end{theorem}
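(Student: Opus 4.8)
The plan is to derive this statement directly from Theorem~\ref{C_{m/g} theorem} by applying the doubling construction of Theorem~\ref{ASSW extension} with $t=1$. Concretely, I would take $S=\Z_n$ in Theorem~\ref{ASSW extension}, so that $C_{m/g}[S]=C_{m/g}[n]$ and $C_{2m/g}[S]=C_{2m/g}[n]$, and I would invoke that theorem with the role of its ``$m$'' played by $m/g$, with cycle lengths $\ell_1=m$ and $\ell_2=n$, and with its blow-up parameter equal to our $n$. Before doing so I would verify the hypotheses: $m$ and $n$ are odd and at least $3$ by assumption, and since $g$ is a common divisor of $m$ and $n$ with $g\neq m$, the integer $m/g$ is odd and strictly greater than $1$, hence $m/g\geq 3$. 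Thus all the oddness and size requirements of Theorem~\ref{ASSW extension} are met, and $2^t(m/g)=2m/g$, $2^t\ell_1=2m$, $2^t\ell_2=2n$ with $t=1$.

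With this setup, whenever $\alpha+\beta=n$ and none of the exceptional configurations listed in the statement occurs, Theorem~\ref{C_{m/g} theorem} furnishes a solution to $\hwp(C_{m/g}[n];m,n;\alpha,\beta)$. Feeding that solution into Theorem~\ref{ASSW extension} produces a solution to $\hwp(C_{2m/g}[n];2m,2n;\alpha,\beta)$, which is exactly what is claimed. The list of possible exceptions is inherited verbatim from Theorem~\ref{C_{m/g} theorem} because the extension construction preserves $\alpha$ and $\beta$ and imposes no further constraints on them (the only arithmetic conditions it needs are the oddness and size bounds just checked).

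There is essentially no obstacle in this argument: all the substantive work has been done in the two cited results, and the present theorem merely records their composition in the form needed in the remainder of Section~\ref{Section blown up cycle} and in the later sections. The only point demanding any care is the verification that $m/g\geq 3$ (so that $C_{m/g}[n]$ is a genuine blown-up cycle and Theorem~\ref{ASSW extension} applies), which is precisely where the hypothesis $g\neq m$ enters.
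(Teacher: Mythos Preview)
Your proposal is correct and matches the paper's approach exactly: the paper simply states that this theorem follows by combining Theorem~\ref{C_{m/g} theorem} with Theorem~\ref{ASSW extension}, and your write-up spells out the routine verification (notably that $m/g\geq 3$ since $g\neq m$ and $m$ is odd) needed to apply the latter with $t=1$ and $S=\Z_n$.
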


In the case $g=1$, however, we can improve this result, removing many of the exceptions and allowing the possibility that $m$ is even in some cases.

We first describe the formation of $C_{2m}$-factors of $C_{2m}[n]$.  The following theorem is proved in~\cite[Theorem 2.11]{BDT1}.
\begin{theorem} \label{Matrix theorem}
Let $T$ be a subset of $\mathbb{Z}_n$ and $\ell \geq 3$.  If there exists a $|T| \times \ell$ matrix $A=[a_{ij}]$ with entries from $T$ such that:
\begin{enumerate}
\item Each row of $A$ sums to 0, and
\item Each column of $A$ is a permutation of $T$,
\end{enumerate}
then there exists a $C_{\ell}$-factorization of $C_{\ell}[T]$.  Moreover, if we also have that:
\begin{enumerate}
\item[3.] $T$ is closed under taking negatives,
\end{enumerate}
then there is a $C_m$-factorization of $C_m[T]$ for any $m \geq \ell$ with $m \equiv \ell \pmod{2}$.
\end{theorem}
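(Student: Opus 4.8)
The plan is to read a $C_{\ell}$-factorization directly off the matrix $A$, one factor per row, using Condition 1 to force the cycles to close up with the correct length and Condition 2 to force the factors to partition $E(C_{\ell}[T])$.

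Write $s=|T|$ and fix an enumeration $T=\{t_1,\dots,t_s\}$. Recall from the excerpt that $C_{\ell}[T]\cong \mathrm{Cay}(\Z_{\ell}\times\Z_n,\{1\}\times T)$; since $\ell\geq 3$ this graph is $2s$-regular, so any $C_{\ell}$-factorization consists of exactly $s$ factors, matching the $s$ rows of $A$. For a row index $i\in[1,s]$ put $\sigma_{i,0}=0$ and $\sigma_{i,x}=a_{i,1}+\cdots+a_{i,x}\in\Z_n$ for $x\in[1,\ell]$, and for each $c\in\Z_n$ let $W_{i,c}$ be the closed walk
\[
(0,c),\ (1,c+\sigma_{i,1}),\ (2,c+\sigma_{i,2}),\ \dots,\ (\ell-1,c+\sigma_{i,\ell-1}),\ (0,c+\sigma_{i,\ell}),
\]
where the step to the $x$-th vertex changes the second coordinate by $a_{i,x}$ and is therefore an edge of $C_{\ell}[T]$.

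First I would verify that each $W_{i,c}$ is a genuine $\ell$-cycle: its $\ell$ vertices have pairwise distinct first coordinates $0,1,\dots,\ell-1$ in $\Z_{\ell}$, and Condition 1 gives $\sigma_{i,\ell}=0$, so the walk returns to $(0,c)$ after exactly $\ell$ steps. For fixed $i$, the cycles $\{W_{i,c}:c\in\Z_n\}$ are vertex-disjoint, since the vertex of $W_{i,c}$ with first coordinate $x$ is $(x,c+\sigma_{i,x})$, which recovers $c$; hence they form a $C_{\ell}$-factor $F_i$ of $C_{\ell}[T]$. Next I would record that $F_i$ uses, for each $x\in[1,\ell]$, precisely the $n$ edges of $C_{\ell}[n]$ joining first coordinates $x-1$ and $x$ (taken mod $\ell$) that have difference $a_{i,x}$. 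Fixing such a position $x$ and letting $i$ vary, Condition 2 says $a_{1,x},\dots,a_{s,x}$ is an enumeration of $T$, so $F_1,\dots,F_s$ together use, at position $x$, each difference of $T$ exactly once, i.e.\ exactly the edges of $C_{\ell}[T]$ at position $x$, with no repetition. As this holds at every position, $\{F_1,\dots,F_s\}$ is a $C_{\ell}$-factorization of $C_{\ell}[T]$. (The count is in fact forced: the $s$ factors carry $s\cdot n\ell=n\ell|T|=|E(C_{\ell}[T])|$ edges, so once no edge is covered twice the factorization is complete.)

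For the ``moreover'' part, assume in addition $T=-T$ and let $m\geq\ell$ with $m\equiv\ell\pmod 2$, say $m-\ell=2k$ with $k\geq 0$. I would enlarge $A$ to an $s\times m$ matrix $A'$ by appending $k$ consecutive pairs of columns, each pair being $(t_1,\dots,t_s)^{\top}$ followed by $(-t_1,\dots,-t_s)^{\top}$. Every appended column is a permutation of $T$ (the negated ones because $T=-T$), and each appended pair adds $t_j+(-t_j)=0$ to the sum of row $j$ for every $j$, so all rows of $A'$ still sum to $0$. Thus $A'$ satisfies Conditions 1 and 2 with $\ell$ replaced by $m$, and since $m\geq\ell\geq 3$ the first part applied to $A'$ yields a $C_m$-factorization of $C_m[T]$. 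I do not expect a genuine obstacle here; the only point that needs care is the bookkeeping pairing each hypothesis with the structural fact it guarantees — Condition 1 with ``$W_{i,c}$ closes after exactly $\ell$ steps through $\ell$ distinct vertices'' and Condition 2 with ``$F_1,\dots,F_s$ split $E(C_{\ell}[T])$ one column-position at a time'' — together with the routine checks that distinct $c$ give vertex-disjoint cycles and distinct rows give edge-disjoint factors.
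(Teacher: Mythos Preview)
Your proof is correct. The paper does not actually prove this theorem; it is quoted from \cite[Theorem 2.11]{BDT1} and stated without argument, so there is no in-paper proof to compare against. That said, your construction---one $C_\ell$-factor per row, cycles built from partial row sums, edge-disjointness read off columnwise, and the extension to $m\equiv\ell\pmod 2$ via appended $\pm$-column pairs---is exactly the standard argument behind results of this type and is what one finds in \cite{BDT1}.
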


\begin{corollary} \label{C_{2m}-factors}
Let $d_1, d_2, \ldots, d_k$ be distinct elements of $\{1, 2, \ldots, \lfloor \frac{n}{2}\rfloor\}\subset~\Z_n$.  There is a $C_{2m}$-factorization of $C_{2m}[S]$, where $S = \{0, \pm d_1, \pm d_2, \ldots, \pm d_k\}$ or 
$\{\pm d_1, \pm d_2, \ldots, \pm d_k\}$.  
\end{corollary}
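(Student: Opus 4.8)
The plan is to reduce everything to Theorem~\ref{Matrix theorem} by exhibiting, with $T=S$, a suitable integer matrix. Since $S$ is closed under taking negatives in both of the stated forms, condition~3 of that theorem holds automatically, so it suffices to produce a $|S|\times\ell$ matrix for some even $\ell\ge 4$ whose rows each sum to $0$ in $\Z_n$ and each of whose columns is a permutation of $S$. I would take $\ell=4$; Theorem~\ref{Matrix theorem} then gives, via condition~3, a $C_M$-factorization of $C_M[S]$ for every even $M\ge 4$, and taking $M=2m$ yields the desired $C_{2m}$-factorization of $C_{2m}[S]$.

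To construct the matrix, I would assign rows to the differences of $S$ as follows. For each $d_i$ with $2d_i\ne 0$ in $\Z_n$ (equivalently $d_i\ne n/2$) include the two rows
\[
(d_i,\,d_i,\,-d_i,\,-d_i)\qquad\text{and}\qquad(-d_i,\,-d_i,\,d_i,\,d_i),
\]
each of which sums to $0$ and which together place $d_i$ and $-d_i$ once in each of the four columns. If $n$ is even and some $d_i$ equals $n/2$ --- there is at most one such index, since the $d_j$ are distinct elements of $\{1,\dots,\lfloor n/2\rfloor\}$ --- then instead include the single row $(n/2,n/2,n/2,n/2)$, which sums to $2n\equiv 0$ and places $n/2$ once in each column. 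Finally, when $S=\{0,\pm d_1,\dots,\pm d_k\}$, add the row $(0,0,0,0)$. Using that the $d_j$ are distinct and bounded by $\lfloor n/2\rfloor$ (so $d_i\ne -d_j$ whenever $i\ne j$), one checks directly that the matrix so obtained has exactly $|S|$ rows, that every row sums to $0$, and that each of its four columns contains each element of $S$ exactly once; thus conditions~1 and~2 of Theorem~\ref{Matrix theorem} are met.

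I do not anticipate a real obstacle: the construction is essentially forced and the verification is routine. The only subtle point is the case of a difference equal to its own negative (possible only when $n$ is even and $d_i=n/2$): using both of the ``paired'' rows there would duplicate a row and overcount $n/2$ in the columns, so that case must be split off and handled by the single all-$n/2$ row, as above. Once this is done, Theorem~\ref{Matrix theorem} applies directly and delivers the statement.
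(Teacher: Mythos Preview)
Your proof is correct and follows essentially the same approach as the paper: both apply Theorem~\ref{Matrix theorem} with a $|S|\times 4$ matrix whose rows pair $d_i$ with $-d_i$ (the paper uses the alternating pattern $(d_i,-d_i,d_i,-d_i)$ rather than your $(d_i,d_i,-d_i,-d_i)$, but this is immaterial). Your treatment of the self-inverse difference $d_i=n/2$ is in fact more careful than the paper's, which tacitly writes its matrices as if $n$ were odd.
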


\begin{proof}
The result follows from the fact that the matrices 
\[
A = \left[ \begin{array}{rrrr} 
    0 & 0 & 0 & 0\\ 
    d_1 & -d_1 & d_1 & -d_1  \\ 
    -d_1 & d_1 & -d_1 & d_1  \\ 
    d_2 & -d_2 & d_2 & -d_2  \\ 
    -d_2 & d_2 & -d_2 & d_2  \\ 
    \vdots & \vdots  & \vdots & \vdots \\ 
    d_k & -d_k & d_k & -d_k  \\ 
    -d_k & d_k & -d_k & d_k 
    \end{array} \right]
\mbox{ and }
B = \left[\begin{array}{rrrr} 
    d_1 & -d_1 & d_1 & -d_1\\ 
    -d_1 & d_1 & -d_1 & d_1\\ 
    d_2 & -d_2 & d_2 & -d_2\\ 
    -d_2 & d_2 & -d_2 & d_2\\ 
    \vdots & \vdots & \vdots & \vdots\\ 
    d_k & -d_k & d_k & -d_k\\ 
    -d_k & d_k & -d_k & d_k
    \end{array} \right]
\]
satisfy conditions 1--3 of Theorem~\ref{Matrix theorem}.
\end{proof}
In particular, Corollary~\ref{C_{2m}-factors} guarantees the existence of $C_{2m}$-factorizations of $C_{2m}[\pm\{0, 1, 2, \ldots, \pm k\}]$ and $C_{2m}[\{\pm 1, \pm 2, \ldots, \pm k\}]$ for any $k \in \{1, 2, \ldots, \lfloor \frac{n}{2}\rfloor\}$.

Next, we describe the formation of $C_{2n}$-factors of $C_{2m}[n]$.  First, recall the following result regarding $C_n$-factors in $C_m[n]$.

\begin{theorem}[\cite{ASSW, BDT1}] \label{C_n factors}
Let $m$ and $n$ be odd integers with $3 \leq m \leq n$.  There is a $C_n$-factorization of $C_m[n,S]$, where either $S=\pm 	\{0,1,  2\}$ or $S=\pm \{w, \ldots, \frac{n-1}{2}\}$ for some $w \in \{0, \ldots, \frac{n-1}{2}\}$.
\end{theorem}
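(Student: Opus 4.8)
The plan is to establish the two assertions separately, in each case by an explicit difference construction exploiting the free action of $\Z_m\times\Z_n$ on $C_m[n]$ by translation of the two coordinates; in particular $\rho\colon(x,i)\mapsto(x,i+1)$ generates a free $\Z_n$-action. In both cases a $C_n$-factor of $C_m[n,S]$ will be obtained as the orbit of a single base trail under a suitably chosen subgroup $\Gamma\le\Z_m\times\Z_n$, in the manner of the $H(w)$ construction of Section~2, arranged so that the orbit closes into $m$ vertex-disjoint cycles, each of length exactly $n$.

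First I would encode the intended factorization as a difference array. A $C_n$-factor of $C_m[n,S]$ is $2$-regular, and each of its $n$-cycles projects onto a closed walk of length $n$ in $C_m$; since $m$ and $n$ are both odd, this walk has odd net winding number in $\Z_m$. I would describe the factorization by an $|S|\times m$ array $A=[a_{j,k}]$ with rows indexed by the $|S|$ factors to be built and columns indexed by the $m$ transitions between consecutive parts of $C_m$, with entries in $S$ such that: (i) every column is a permutation of $S$, so that the factors are pairwise edge-disjoint and together cover every edge of $C_m[n,S]$; and (ii) the partial sums along row $j$, and the total row sum $\delta_j$, are prescribed so that the base trail read off from that row---starting at $(0,0)$ and using $a_{j,k}$ as the $\Z_n$-increment at the $k$-th part-transition---has the right net shift; a winding-number computation of the kind in Lemma~\ref{lengthC(w)} then shows that the $\Gamma$-orbit of this trail consists of exactly $m$ disjoint $n$-cycles. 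For $S=\pm\{0,1,2\}$ this calls for a suitable $5\times m$ array, and for $S=\pm\{w,\ldots,\frac{n-1}{2}\}$ an $|S|\times m$ array; in the latter case the hypothesis that $S$ is a block of \emph{consecutive} residues of $\Z_n$ is precisely what makes ``staircase'' base trails---trails that advance through the parts of $C_m$ while incrementing the $\Z_n$-coordinate by consecutive amounts---available and easy to balance. When $w=0$ we have $S=\Z_n$, and the statement reduces to the classical $C_n$-factorization of $C_m[n]$ from~\cite{ASSW}.

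The step I expect to be the main obstacle is the length control: one must choose the base trails so that, after orbiting, every component has length exactly $n$ rather than a proper divisor or multiple of it. When $m\mid n$ one can take $\Gamma=\langle\rho\rangle$, and it suffices to make each $\delta_j$ a multiple of $m$ with $\gcd(\delta_j,n)=m$. When $m\nmid n$, however, a base trail whose $C_m$-projection returns to its starting part cannot close up under $\rho$ alone into cycles of length $n$, so $\Gamma$ must be taken larger---generated by an element with nonzero $\Z_m$-component---and $\Gamma$, the trail, and its length must be coordinated so that the winding-number count (as in Lemma~\ref{lengthC(w)}, now in both directions) yields cycles of length exactly $n$. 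Making all of this work uniformly over every odd $m$ with $3\le m\le n$ and every admissible $w$, together with a short list of small cases, is the technical heart of the argument, and is carried out in~\cite{ASSW,BDT1}.
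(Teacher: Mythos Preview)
The paper does not prove this theorem at all: it is stated as a known result imported from~\cite{ASSW,BDT1} and used as a black box (its only role in the paper is to feed Theorem~\ref{ASSW extension} and yield Lemma~\ref{012}). Your proposal, in the end, also defers to~\cite{ASSW,BDT1} for the actual work, so at the level of what is \emph{proved} you and the paper agree.

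That said, the narrative you add before citing is not quite aligned with the machinery you invoke. The $H(w)$ construction of Section~2 does not build $C_n$-factors of $C_m[n,S]$; it takes a $2$-factor of $C_m[n,S]$ and stretches the \emph{base} cycle to produce a $2$-factor of $C_{wm}[n,S]$ with cycle lengths multiplied by $w/\gcd(\epsilon,w)$. In the paper this is applied \emph{after} Theorem~\ref{C_n factors}, turning $C_n$-factors of $C_m[n,S]$ into $C_{2n}$-factors of $C_{2m}[n,S]$ (Lemma~\ref{012}); it is not a tool for manufacturing the $C_n$-factors themselves. Likewise, the array framework in Theorem~\ref{Matrix theorem} is tailored to $C_\ell$-factorizations of $C_\ell[T]$ (equal base and cycle length), whereas here one needs cycles of length $n$ inside $C_m[n,S]$ with $m<n$; the $|S|\times m$ array you describe records only $m$ transitions per row and so cannot directly encode an $n$-cycle unless $m\mid n$, a point you yourself flag. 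The genuine constructions in~\cite{ASSW,BDT1} handle the $m\nmid n$ case by different means (in particular, explicit base cycles rather than the orbit-of-a-short-trail picture you sketch), so if you intend to reconstruct them you should not lean on Section~2 or Theorem~\ref{Matrix theorem}.
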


As a consequence of Theorems~\ref{C_n factors} and \ref{ASSW extension}, we now have the following.

\begin{lemma} \label{012} \label{C_{2n}-factors}
Let $m$ and $n$ be odd integers with $3 \leq m \leq n$.  There is a $C_{2n}$-factorization of $C_{2m}[n,S]$, where either $S=\pm \{0, 1, 2\}$ or $S=\pm \{w, \ldots, \frac{n-1}{2}\}$ for some $w \in \{0, \ldots, \frac{n-1}{2}\}$.
\end{lemma}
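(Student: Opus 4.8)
The plan is to read this off directly from Theorem~\ref{C_n factors} and Theorem~\ref{ASSW extension}, exactly as the preceding sentence advertises. First I would invoke Theorem~\ref{C_n factors}: for $S=\pm\{0,1,2\}$ and for $S=\pm\{w,\dots,\frac{n-1}{2}\}$ with $w\in\{0,\dots,\frac{n-1}{2}\}$ (the two shapes of connection set named in the statement, so that no reparametrisation of $w$ is needed), it supplies a $C_n$-factorization of $C_m[n,S]$. A $C_n$-factorization is the degenerate case of the Hamilton--Waterloo problem in which the two prescribed cycle lengths coincide, so this is in particular a solution of $\hwp(C_m[S];n,n;\alpha,\beta)$ for any $\alpha,\beta\ge 0$ whose sum equals the number of $2$-factors of $C_m[n,S]$; equivalently, one may simply take $\beta=0$.

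Next I would check the hypotheses of Theorem~\ref{ASSW extension} with $\ell_1=\ell_2=n$ and $t=1$. We are given that $m\ge 3$ is odd, and since $n\ge m\ge 3$ with $n$ odd we also have $n\ge 3$ odd; hence $\ell_1,\ell_2,m\ge 3$ are all odd, and $S$ is by construction a subset of $\Z_n$. Theorem~\ref{ASSW extension} then upgrades the solution of $\hwp(C_m[S];n,n;\alpha,\beta)$ to a solution of $\hwp(C_{2^t m}[S];2^t n,2^t n;\alpha,\beta)=\hwp(C_{2m}[S];2n,2n;\alpha,\beta)$, which is precisely a $C_{2n}$-factorization of $C_{2m}[n,S]$, as claimed.

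I do not expect any genuine obstacle here: the proof is a two-line chaining of already-established results. The only points requiring (trivial) care are the bookkeeping that $n$ inherits oddness and the bound $n\ge 3$ from the hypothesis $n\ge m\ge 3$; that the requirement $2^t m=2m$ forces the choice $t=1$, so that $2^t n=2n$; and the observation that $C_{2m}[S]$ in Theorem~\ref{ASSW extension} is indeed the subgraph of $C_{2m}[n]$ whose edges have differences in $S$, which is immediate from the identification $C_{wm}[n,S]\cong\mathrm{Cay}(\Z_{wm}\times\Z_n,\{1\}\times S)$ recorded before Lemma~\ref{properties:H(w)}.
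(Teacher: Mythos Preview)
Your proposal is correct and follows exactly the route the paper indicates: apply Theorem~\ref{C_n factors} to obtain a $C_n$-factorization of $C_m[n,S]$, interpret this as a (degenerate) instance of $\hwp(C_m[S];n,n;\alpha,\beta)$, and then invoke Theorem~\ref{ASSW extension} with $\ell_1=\ell_2=n$ and $t=1$. The hypothesis checks you list are precisely what is needed, and nothing further is required.
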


We can now provide solutions to $\hwp(C_{2m}[n];2m,2n;\alpha,\beta)$.
The following theorems improve on the result of Theorem~\ref{C_{m/g} extended} in the case that $g=1$.  In particular, note that Theorem~\ref{C_{2m}[n] odd alpha} allows for the possibility that $m$ is even.
\begin{theorem} \label{C_{2m}[n] odd alpha}
Let $m,n \geq 2$ be positive integers with $n$ odd, and let $0<\alpha \leq n$ be odd.  Then there is a solution to $\hwp(C_{2m}[n];2m,2n; \alpha,n-\alpha)$.  
\end{theorem}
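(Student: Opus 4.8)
The plan is to split the connection set $\Z_n$ of $C_{2m}[n]=C_{2m}[n,\Z_n]$ (a $2n$-regular graph) into two blocks of consecutive residues, using the first to build the $\alpha$ $C_{2m}$-factors and the second to build the $\beta:=n-\alpha$ $C_{2n}$-factors; note $\beta$ is even since $n$ and $\alpha$ are odd. Concretely, I would put $k=(\alpha-1)/2$ and take
\[
S_1=\{0,\pm1,\ldots,\pm k\},\qquad S_2=\Z_n\setminus S_1=\{k+1,k+2,\ldots,n-k-1\},
\]
so that $|S_1|=\alpha$, $|S_2|=\beta$, and $C_{2m}[n]=C_{2m}[n,S_1]\oplus C_{2m}[n,S_2]$. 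Since $\alpha\le n$ forces $k\le\lfloor n/2\rfloor$, Corollary~\ref{C_{2m}-factors} (with $\{d_1,\ldots,d_k\}=\{1,\ldots,k\}$) supplies a $C_{2m}$-factorization of $C_{2m}[n,S_1]$; being $2\alpha$-regular, it consists of exactly $\alpha$ $C_{2m}$-factors. If $\alpha=n$ then $S_2=\emptyset$ and we are done, so from now on $\beta\ge 2$.

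The heart of the proof is a $C_{2n}$-factorization of $C_{2m}[n,S_2]$, and the key idea is to pair up \emph{consecutive} differences rather than opposite ones. I would partition $S_2$ into the $\beta/2$ pairs $\{k+1,k+2\},\{k+3,k+4\},\ldots,\{n-k-2,n-k-1\}$, each of the form $P=\{c-1,c\}$ with $c\notin\{0,1\}$, and show that the $4$-regular graph $C_{2m}[n,P]$ splits into two $C_{2n}$-factors. View $C_{2m}[n]$ as a cyclic chain of $2m$ \emph{slots}, the slot between positions $p$ and $p+1$ carrying all edges of $C_{2m}[n]$ between $\{p\}\times\Z_n$ and $\{p+1\}\times\Z_n$. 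A direct check shows that, inside one slot, the $2n$ edges of $C_{2m}[n,P]$ form a single $2n$-cycle
\[
(p,0),\,(p+1,c),\,(p,1),\,(p+1,c+1),\,\ldots,\,(p,n-1),\,(p+1,c+n-1),\,(p,0),
\]
alternately using difference $c$ (from position $p$ to $p+1$) and difference $c-1$ (going back). Then I would split the $2m$ slots into the $m$ \emph{even} slots $\{0,1\},\{2,3\},\ldots$ and the $m$ \emph{odd} slots $\{1,2\},\{3,4\},\ldots,\{2m-1,0\}$ (genuinely distinct since $2m\ge 4$): taking the above $2n$-cycle in every even slot gives a $2$-regular spanning subgraph of $C_{2m}[n,P]$ all of whose components are $2n$-cycles, hence a $C_{2n}$-factor, and the odd slots give a second $C_{2n}$-factor edge-disjoint from it; the two together exhaust $C_{2m}[n,P]$. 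Running over all $\beta/2$ pairs yields $\beta$ $C_{2n}$-factors, i.e.\ a $C_{2n}$-factorization of $C_{2m}[n,S_2]$, and combined with the $\alpha$ $C_{2m}$-factors of $C_{2m}[n,S_1]$ this gives the required solution to $\hwp(C_{2m}[n];2m,2n;\alpha,n-\alpha)$.

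The steps needing care are all in the second paragraph, and none is a serious obstacle. First, one verifies that the displayed walk closes into a single $2n$-cycle rather than several shorter cycles and uses each of the $2n$ difference-$P$ edges of the slot exactly once; this is a short computation showing that from $(p,t)$ one always returns to position $p$ at layer $t+1$. Second, one checks that assembling the slot-cycles over the even slots (resp.\ the odd slots) really produces a $2$-regular spanning subgraph — the point being that a vertex at an odd position lies in exactly one even slot and one odd slot, so it receives degree $2$ from each family. The only genuine insight is to pair differences consecutively as $\{c-1,c\}$, producing the "accordion" $2n$-cycles confined to two positions, rather than as $\{d,-d\}$, which would wrap around all $2m$ positions and give $C_{2m}$-type cycles. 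A pleasant by-product is that the construction works uniformly for every $m\ge 2$, in particular for even $m$, where $C_{2m}[n]$ is bipartite and the doubling machinery of Theorem~\ref{ASSW extension} is unavailable for manufacturing $C_{2n}$-factors.
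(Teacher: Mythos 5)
Your proof is correct, and its first half coincides with the paper's: the same split of the connection set into $S_1=\pm\{0,1,\ldots,\frac{\alpha-1}{2}\}$ and its complement $S_2$, with Corollary~\ref{C_{2m}-factors} supplying the $\alpha$ $C_{2m}$-factors of $C_{2m}[n,S_1]$. Where you genuinely diverge is in the $C_{2n}$-factorization of $C_{2m}[n,S_2]$: the paper simply invokes Lemma~\ref{C_{2n}-factors}, which is obtained by applying the doubling construction of Theorem~\ref{ASSW extension} to the known $C_n$-factorizations of $C_m[n,S]$ from Theorem~\ref{C_n factors}, whereas you build the factors from scratch by pairing \emph{consecutive} differences $\{c-1,c\}$ and decomposing each $4$-regular graph $C_{2m}[n,\{c-1,c\}]$ into two $C_{2n}$-factors via the ``accordion'' cycles confined to a single slot of two adjacent positions, then alternating even and odd slots. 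Your verification is sound: each slot's $2n$ difference-$P$ edges do form a single $2n$-cycle because the two differences differ by $1$ (coprime to $n$), every vertex lies in exactly one even and one odd slot, and $\beta=n-\alpha$ is even so the consecutive pairing of the interval $S_2$ works; only $m\ge2$ is needed. What your route buys is self-containedness and uniform coverage of the theorem's stated hypotheses: Lemma~\ref{C_{2n}-factors} is stated only for odd $m$ with $3\le m\le n$, so your direct construction handles even $m$ (and $m=2$ or $m>n$) without the doubling machinery --- exactly the extra generality the theorem advertises. What the paper's route buys is brevity, since the required factorizations are already available from earlier results.
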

\begin{proof}
First decompose $C_{2m}[n]=G_1 \oplus G_2$, where $G_1=C_{2m}[\pm \{0, 1, \ldots, \frac{\alpha-1}{2}\}]$ and $G_2=C_{2m}[\pm \{\frac{\alpha+1}{2}, \ldots, \frac{n-1}{2}\}]$.  (Note that if $\alpha=n$, then $G_2$ has no edges.)  Now, $G_1$ admits a $C_{2m}$-factorization by Corollary~\ref{C_{2m}-factors}, and $G_2$ admits a $C_{2n}$-factorization by Lemma~\ref{C_{2n}-factors}.
\end{proof}

\begin{theorem} \label{C_{2m}[n] even alpha}
Let $3 \leq m \leq n$ be odd integers and let $0 \leq \alpha \leq n-5$ be even.  Then there is a solution to $\hwp(C_{2m}[n];2m,2n;\alpha,n-\alpha)$.
\end{theorem}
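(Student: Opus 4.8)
The plan is to adapt the difference-splitting argument from the proof of Theorem~\ref{C_{2m}[n] odd alpha}, replacing its two-part partition of the connection set with a three-part one so as to realise an \emph{even} number $\alpha$ of $C_{2m}$-factors. Recall that, since $n$ is odd, $\Z_n=\pm\{0,1,\dots,\frac{n-1}{2}\}$, and that for a symmetric $S\subseteq\Z_n$ the graph $C_{2m}[n,S]\cong\mathrm{Cay}(\Z_{2m}\times\Z_n,\{1\}\times S)$ is $2|S|$-regular (the sets $\{1\}\times S$ and $\{2m-1\}\times(-S)$ being disjoint as $2m\geq 6$); thus a symmetric block of $k$ differences accounts for exactly $k$ of the $n$ two-factors we must produce.

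First I would set $w=\frac{\alpha}{2}+3$, an integer because $\alpha$ is even, and split $\Z_n=S_1\cup S_2\cup S_3$ into the pairwise-disjoint symmetric blocks $S_1=\pm\{0,1,2\}$, $S_2=\pm\{3,4,\dots,w-1\}$ and $S_3=\pm\{w,w+1,\dots,\frac{n-1}{2}\}$, so that $C_{2m}[n]=C_{2m}[S_1]\oplus C_{2m}[S_2]\oplus C_{2m}[S_3]$. A short count gives $|S_1|=5$, $|S_2|=2(w-3)=\alpha$ and $|S_3|=n+1-2w=\beta-5$, where $\beta=n-\alpha\geq 5$; here $S_2=\emptyset$ when $\alpha=0$ and $S_3=\emptyset$ when $\alpha=n-5$, and in those boundary cases the corresponding block is simply omitted.

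Next I would assemble the factorization from the results already available. By Lemma~\ref{C_{2n}-factors}, $C_{2m}[S_1]=C_{2m}[n,\pm\{0,1,2\}]$ has a $C_{2n}$-factorization, which has $5$ factors; and when $S_3\neq\emptyset$ we have $w\leq\frac{n-1}{2}$ (this is exactly where the hypothesis $\alpha\leq n-5$ is used), so Lemma~\ref{C_{2n}-factors} also gives a $C_{2n}$-factorization of $C_{2m}[S_3]=C_{2m}[n,\pm\{w,\dots,\frac{n-1}{2}\}]$, with $\beta-5$ factors. By Corollary~\ref{C_{2m}-factors}, since $S_2=\{\pm 3,\dots,\pm(w-1)\}$ with $3\leq w-1=\frac{\alpha}{2}+2\leq\frac{n-1}{2}=\lfloor n/2\rfloor$, the graph $C_{2m}[S_2]$ has a $C_{2m}$-factorization, with $\alpha$ factors. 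The union of these three factorizations is a 2-factorization of $C_{2m}[n]$ into $\alpha$ $C_{2m}$-factors and $5+(\beta-5)=\beta$ $C_{2n}$-factors, as required.

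I do not expect a genuine obstacle: the substantive content has been pushed into Corollary~\ref{C_{2m}-factors} and Lemma~\ref{C_{2n}-factors} (the latter built on Theorem~\ref{ASSW extension}), and the only delicate point is the arithmetic that fixes $w$ so that $|S_2|$ hits the prescribed even value $\alpha$ while $S_3$ stays in the admissible range $\pm\{w,\dots,\frac{n-1}{2}\}$. That bookkeeping is what forces $\beta\geq 5$: five of the $C_{2n}$-factors must be carried by the fixed low block $\pm\{0,1,2\}$, since (unlike in the odd-$\alpha$ case) the difference $0$ cannot be absorbed into the $C_{2m}$-part without making the number of $C_{2m}$-factors odd.
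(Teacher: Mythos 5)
Your proof is correct and follows essentially the same route as the paper: the identical three-block split of the differences into $\pm\{0,1,2\}$, $\pm\{3,\dots,\frac{\alpha}{2}+2\}$ and $\pm\{\frac{\alpha}{2}+3,\dots,\frac{n-1}{2}\}$, with Lemma~\ref{C_{2n}-factors} handling the first and third blocks and Corollary~\ref{C_{2m}-factors} the middle one. Your extra bookkeeping (the degenerate cases $\alpha=0$ and $\alpha=n-5$, and the check $w-1\leq\frac{n-1}{2}$) just makes explicit what the paper leaves implicit.
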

\begin{proof}
First decompose $C_{2m}[n] = G_1 \oplus G_2 \oplus G_3$, where $G_1=C_{2m}[0, \pm 1, \pm 2]$, $G_2=C_{2m}[\pm 3, \pm 4, \ldots, \pm \frac{\alpha+4}{2}]$ and $G_3=C_{2m}[\pm \frac{\alpha+6}{2}, \ldots, \pm \frac{n-1}{2}]$.  We have that $G_1$ factors into five $C_{2n}$-factors and $G_3$ factors into $n-\alpha-5$ $C_{2n}$-factors by Lemma~\ref{C_{2n}-factors}, giving $n-\alpha$ $C_{2n}$-factors in total.  Finally, $G_2$ factors into $\alpha$ $C_{2m}$-factors by Corollary~\ref{C_{2m}-factors}.
\end{proof}

In the case that $m$ and $n$ are both odd, Theorems~\ref{C_{m/g} extended} and \ref{C_{2m}[n] even alpha} give the following result regarding factorizations of $C_{2m/g}[n]$.
\begin{theorem} \label{decomposing C_{2m/g}[n]}
Let $m$ and $n$ be odd integers with $n>m \geq 3$, let $g \neq m$ be a common divisor 
 of $m$ and $n$, and let $\alpha$ and $\beta$ be nonnegative integers.  There is a solution to $\hwp(C_{2m/g}[n];2m,2n;\alpha,\beta)$ whenever $\alpha+\beta=n$, except possibly if at least  one of the following conditions hold:
\begin{enumerate}
\item $\beta \in \{1,3\}$;
\item $g>1$ and $\alpha=1$.
\end{enumerate}
\end{theorem}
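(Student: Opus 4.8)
The plan is to derive this statement purely by combining the two results that precede it, namely Theorem~\ref{C_{m/g} extended} (which already handles almost everything) and Theorem~\ref{C_{2m}[n] even alpha} (which is used only to kill the two stray $g=1$ exceptions left by the former). So I would set up the proof as a short case analysis on the exceptional cases of Theorem~\ref{C_{m/g} extended}.

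First I would invoke Theorem~\ref{C_{m/g} extended}. Assume $\alpha+\beta=n$, that $\beta\notin\{1,3\}$, and that we are not in the case $g>1$ with $\alpha=1$ (these being exactly the two exceptions we are allowed to leave open). Then Theorem~\ref{C_{m/g} extended} already produces a solution to $\hwp(C_{2m/g}[n];2m,2n;\alpha,\beta)$ unless we are in one of the two remaining situations: $g=1$ with $(\alpha,\gcd(m!,n))=(2,1)$, or $g=1$ with $(\alpha,m)=(4,3)$.

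Next I would dispose of these two leftover cases. In both, $g=1$, so $C_{2m/g}[n]=C_{2m}[n]$, and $\alpha\in\{2,4\}$ is even. Since $n$ is odd and $\alpha$ is even, $\beta=n-\alpha$ is odd; together with the standing assumption $\beta\notin\{1,3\}$ this forces $\beta\geq 5$, and hence $\alpha=n-\beta\leq n-5$. As $m\geq 3$ is odd and $m<n$, the hypotheses of Theorem~\ref{C_{2m}[n] even alpha} (with this even $\alpha$ satisfying $0\leq\alpha\leq n-5$) are met, so that theorem yields a solution to $\hwp(C_{2m}[n];2m,2n;\alpha,n-\alpha)$, i.e.\ exactly the factorization of $C_{2m/g}[n]$ we wanted. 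Combining with the previous paragraph covers all cases, completing the proof.

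I do not expect any real obstacle here: the whole content is the bookkeeping observation that the two exceptions of Theorem~\ref{C_{m/g} extended} occur only for even $\alpha\in\{2,4\}$, and that the size bound $\alpha\leq n-5$ demanded by Theorem~\ref{C_{2m}[n] even alpha} is equivalent to $\beta\geq 5$, which holds automatically once $\beta$ is odd and not in $\{1,3\}$. The one point worth stating explicitly in the write-up is this parity argument for $\beta$, since it is what turns ``$\beta\notin\{1,3\}$'' into the quantitative bound needed to apply Theorem~\ref{C_{2m}[n] even alpha}.
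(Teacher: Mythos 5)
Your proposal is correct and follows essentially the same route as the paper, which states this theorem as a direct consequence of Theorems~\ref{C_{m/g} extended} and~\ref{C_{2m}[n] even alpha} without further argument; your write-up simply makes explicit the bookkeeping the paper leaves implicit. In particular, your key observation --- that the residual $g=1$ exceptions have $\alpha\in\{2,4\}$ even, so $\beta=n-\alpha$ is odd, and $\beta\notin\{1,3\}$ then forces $\beta\geq 5$, i.e.\ $\alpha\leq n-5$ as required by Theorem~\ref{C_{2m}[n] even alpha} --- is exactly the intended justification.
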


\section{Main Results}
\label{Section Main}

\begin{lemma} \label{Main Lemma}
Let $m$ and $n$ be odd positive integers with $n>m \geq 3$, let $G$ be a regular graph of degree $2(\alpha+\beta)>2n$ with $\alpha, \beta >0$, and let $g \neq m$ be a divisor of $\gcd(m,n)$.  If $G$ has a $C_{2m/g}[n]$-factorization, then there is a solution to $\hwp(G;2m,2n;\alpha,\beta)$, except possibly if at least one of the following holds:
\begin{enumerate}
\item $\beta \in \{1,3\}$;
\item $g>1$ and $\alpha=1$.
\end{enumerate}
\end{lemma}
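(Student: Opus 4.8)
The plan is to exploit the hypothesis that $G$ has a $C_{2m/g}[n]$-factorization and to apply Theorem~\ref{decomposing C_{2m/g}[n]} on each factor separately, distributing the $\alpha$ $C_{2m}$-factors and $\beta$ $C_{2n}$-factors among the copies of $C_{2m/g}[n]$. Write $G = H_1 \oplus H_2 \oplus \cdots \oplus H_r$, where each $H_i \cong C_{2m/g}[n]$ and $r = (\alpha+\beta)/n$; this is an integer because $G$ is $2(\alpha+\beta)$-regular and a $C_{2m/g}[n]$-factorization partitions the edge set into copies of a graph that is $2n$-regular (the lexicographic product $C_{2m/g}[n]$ is $2n$-regular), so $2(\alpha+\beta) = 2nr$. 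Since $\alpha+\beta > n$ we have $r \geq 2$, and in fact $r\ge 2$ gives us room to avoid the small-case exceptions on individual blocks.

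The main work is a counting/packing argument: I need to choose nonnegative integers $\alpha_1,\dots,\alpha_r$ and $\beta_1,\dots,\beta_r$ with $\alpha_i + \beta_i = n$ for each $i$, with $\sum \alpha_i = \alpha$ and $\sum\beta_i = \beta$, such that for every $i$ the triple $(\alpha_i,\beta_i)$ avoids the exceptional cases of Theorem~\ref{decomposing C_{2m/g}[n]} — namely $\beta_i \notin \{1,3\}$, and if $g > 1$ then $\alpha_i \neq 1$. Then applying Theorem~\ref{decomposing C_{2m/g}[n]} to each $H_i$ yields a solution to $\hwp(H_i;2m,2n;\alpha_i,\beta_i)$, and the union of these over $i$ is the desired solution to $\hwp(G;2m,2n;\alpha,\beta)$. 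The bookkeeping splits into cases according to the residues of $\alpha$ and $\beta$ and whether $g=1$ or $g>1$: the idea is to make all but one of the blocks ``safe'' by taking, say, $\beta_i \in \{0, n\}$ or other values comfortably away from $\{1,3\}$ (and $\alpha_i$ away from $1$ when $g>1$), pushing any awkward leftover into a single block; then one checks that since $\beta \notin \{1,3\}$ and (when $g>1$) $\alpha\neq 1$ by the hypothesis being proved, and $r \geq 2$, the leftover can always be arranged to be safe as well — for instance, if a naive split would leave $\beta_r \in \{1,3\}$, one transfers $n$ units of $\beta$ from another block, or shifts a couple of units between two blocks, to repair it. A symmetric manoeuvre handles $\alpha_r = 1$ when $g > 1$.

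The step I expect to be the main obstacle is precisely this case analysis ensuring the split can always be made to avoid all exceptional triples simultaneously, particularly near the boundary where $\alpha$ or $\beta$ is small (close to the forbidden values) while still being admissible, and making sure the moves used to repair one block do not create a new exception in another. One has to be careful that $n \geq m \geq 3$ is odd, so $n \geq 3$, which limits how much can be moved within a single block of total size $n$; for small $n$ (e.g.\ $n = 3$ or $5$) the flexibility is least and those cases may need to be checked essentially by hand. I would organize the argument by first disposing of the generic case $r$ large or $\alpha,\beta$ both moderately sized, where an explicit greedy assignment works, and then treating the finitely many remaining small configurations directly, each time exhibiting the explicit block sizes and invoking Theorem~\ref{decomposing C_{2m/g}[n]} on each block.
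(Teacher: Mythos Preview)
Your proposal is correct and follows essentially the same approach as the paper: decompose $G$ into $r=(\alpha+\beta)/n \geq 2$ copies of $C_{2m/g}[n]$, then choose $(\alpha_i,\beta_i)$ on each copy avoiding the exceptions of Theorem~\ref{decomposing C_{2m/g}[n]}. The paper handles the packing you are worried about by writing $\alpha = xn + y$ and giving explicit values of $\alpha_i$ in three cases (generic $y$; $y=1$ with $g>1$; $y\in\{n-3,n-1\}$), which uniformly covers the boundary situations without any separate small-$n$ analysis (note that $g>1$ forces $n>9$, so the needed room is always available).
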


\begin{proof}
Let $m'=m/g$, and let $\mathcal{G} = \{G_1, G_2, \ldots, G_r\}$ be a set of $C_{2m'}[n]$-factors which decompose $G$.  Note that the existence of such a factorization implies that $m'>1$ (so that $g \neq m$).  Also, $r=\frac{\alpha+\beta}{n} \geq 2$.  

For each $i \in \{1, \ldots, r\}$, we will define appropriate values $\alpha_i, \beta_i$ such that $\alpha_1 + \cdots + \alpha_r=\alpha$, $\beta_1 + \cdots + \beta_r = \beta$ and $\alpha_i + \beta_i=n$.  We then factor each $G_i$ into $\alpha_i$ $C_{2m}$-factors and $\beta_i$ $C_{2n}$-factors by using Theorem~\ref{decomposing C_{2m/g}[n]} to solve $\mathrm{HWP}(C_{2m'}[n]; 2m,2n; \alpha_i, \beta_i)$.
 
Write $\alpha=xn+y$, where $0 \leq x < r$ and $0 \leq y < n$.  First, if $y \notin \{1, n-3, n-1\}$ or if $y=1$ and $g=1$, define
\[
\alpha_i = \left\{ \begin{array}{ll}
n, & \mbox{if } 1 \leq i \leq x \\
y, & \mbox{if } i=x+1 \\
0, & \mbox{if } x+2 \leq i \leq r.
\end{array}
\right.
\]
Otherwise, if $y=1$ and $g>1$, noting that by Exception 2 we have $x \geq 1$, define
\[
\alpha_i = \left\{ \begin{array}{ll}
n, & \mbox{if } 1 \leq i \leq x-1 \\
n-2, & \mbox{if } i=x \\
3, & \mbox{if } i=x+1 \\
0, & \mbox{if } x+2 \leq i \leq r.
\end{array}
\right.
\]
Finally, supposing $y \in \{n-3, n-1\}$, then by Exception 1 we have $x \leq r-2$.  If $g=1$, let $z=1$; otherwise (noting that $n>9$) let $z=3$.  Define 
\[
\alpha_i = \left\{ \begin{array}{ll}
n, & \mbox{if } 1 \leq i \leq x \\
y-z, & \mbox{if } i=x+1 \\
z, & \mbox{if } i=x+2 \\
0, & \mbox{if } x+3 \leq i \leq r,
\end{array}
\right.
\]

In any case, we define $\beta_i = n-\alpha_i$ for all $i \in \{1, \ldots, r\}$.  It is easy to verify in each case that Theorem~\ref{decomposing C_{2m/g}[n]} guarantees the existence of a solution of $\hwp(C_{2m/g}[n];2m,2n;\alpha_i,\beta_i)$ for each $i \in \{1, \ldots, r\}$, proving the result.
\end{proof}

\begin{theorem} \label{multipartite}
Let $t$ and $w$ be positive integers with $w$ even and $t \geq 3$. Let $m$ and $n$ be odd divisors of $w$ with $n>m \geq 3$ and $m \nmid n$, and suppose that $\alpha$ and $\beta$ are positive integers such that $2(\alpha+\beta)=(t-1)w$.  There is a solution to $\hwp(K_t[w];2m,2n;\alpha,\beta)$, except possibly if at least one of the following holds:
\begin{enumerate}
\item $\beta =1$; 
\item $\beta=3$ and $\gcd(m,n)=1$;  
\item $\alpha=1$ and $mn \nmid w$.  
\end{enumerate}
\end{theorem}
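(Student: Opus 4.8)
The plan is to build the factorization of $K_t[w]$ by first decomposing $K_t[w]$ into copies of $C_{2m/g}[n]$ (where $g=\gcd(m,n)$) and then invoking Lemma~\ref{Main Lemma}. The graph $K_t[w]$ is $(t-1)w$-regular, so it has $2(\alpha+\beta)=(t-1)w$ edges at each vertex, and since $n\mid w$ and $n>m\geq 3$ with $m\nmid n$, the value $m'=m/g\geq 2$; moreover $m'\mid w$ (as $m\mid w$ and $g\mid m$), so the "blown-up cycle" $C_{2m'}[n]$ has $2m'n$ vertices, which divides $tw$ when $n\mid w$ and $2m'\mid tw/(w/n)\cdot\ldots$ — so I first need to check the arithmetic conditions that make $tw$ a multiple of $|V(C_{2m'}[n])|=2m'n$ and, more importantly, that $K_t[w]$ genuinely decomposes into $C_{2m'}[n]$-factors. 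For this I would use Theorem~\ref{liu}: since $2m'n$ is even and divides $w$ when $\mathrm{lcm}(2m',n)\mid w$ (which holds because $m',n$ are odd divisors of $w$ so $2m'n/\gcd(m',n)\mid$\dots requires a short check), $K_t[w]$ first decomposes as $K_t[w]=K_t\big[\tfrac{w}{?}\big]$-type product, and then one applies a $C_{2m'n}$-factorization obtained from Liu's theorem — or, more carefully, one decomposes $K_t[w]$ into $C_{2m'}[n]$-factors directly by noting $C_{2m'}[n]$ is itself $2$-regular of order $2m'n$ and using a resolvable $C_{2m'n}$-decomposition of $K_t[w]$ together with the observation that a $C_{2m'n}$-factor of $K_t[w]$ can be re-coordinatized as a $C_{2m'}[n]$-factor when $n\mid w$; this is the standard move of grouping the parts of $K_t[w]$ into "super-parts" of size $n$.

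Once the $C_{2m/g}[n]$-factorization of $G:=K_t[w]$ is in hand, I would check the hypotheses of Lemma~\ref{Main Lemma}: $G$ is regular of degree $2(\alpha+\beta)=(t-1)w$; I need $2(\alpha+\beta)>2n$, i.e.\ $(t-1)w>2n$, which follows since $t\geq 3$ gives $(t-1)w\geq 2w\geq 2n$, and strict inequality needs $w>n$ or $t\geq 4$ — the boundary case $t=3$, $w=n$ must be handled separately (here $2(\alpha+\beta)=2n$ exactly, so Lemma~\ref{Main Lemma} does not apply and one needs a direct argument or the observation that $w=n$ forces $m\mid n$ or an explicit small construction). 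With $g\mid\gcd(m,n)$ and $g\neq m$, Lemma~\ref{Main Lemma} then yields a solution to $\hwp(G;2m,2n;\alpha,\beta)$ except when $\beta\in\{1,3\}$ or ($g>1$ and $\alpha=1$). Finally I would reconcile these exceptions with those stated in the theorem: $\beta=1$ is Exception~1; $\beta=3$ with $\gcd(m,n)=1$ (so the only available $g$ is $g=1$) is Exception~2, while $\beta=3$ with $\gcd(m,n)>1$ is \emph{not} an exception because we may take $g=\gcd(m,n)>1$, for which Lemma~\ref{Main Lemma} has no $\beta=3$ restriction only if $g>1$ — wait, Lemma~\ref{Main Lemma} excludes $\beta\in\{1,3\}$ regardless of $g$, so to remove the $\beta=3$ case when $\gcd(m,n)>1$ I must instead invoke Theorem~\ref{C_{2g}[N]} via Theorem~\ref{ASSW extension}, i.e.\ handle $\beta=3$, $g>1$ by the $C_{2g}[N]$-route rather than Lemma~\ref{Main Lemma}. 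Similarly $\alpha=1$: Lemma~\ref{Main Lemma} excludes it only when $g>1$, so taking $g=1$ covers $\alpha=1$ unless that forces a separate obstruction — and indeed the residual case is $\alpha=1$ together with $mn\nmid w$, which is Exception~3; when $mn\mid w$ one has extra room and $\alpha=1$ can be absorbed.

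Concretely, the step order I would follow is: (i) set $g=\gcd(m,n)$, $m'=m/g$, $r=(t-1)w/(2n)$, and verify $r$ is a positive integer and $r\geq 2$ except in the boundary case $t=3,w=n$; (ii) produce a $C_{2m'}[n]$-factorization of $K_t[w]$ from Theorem~\ref{liu} by the super-part regrouping; (iii) apply Lemma~\ref{Main Lemma} with this $g$ to get the result away from $\beta\in\{1,3\}$, $\alpha=1$; (iv) for $\beta=3$ with $g=\gcd(m,n)>1$, instead decompose $K_t[w]$ into $C_{2g}[N]$-factors (again via Liu, with $N=w/$something) and apply Theorem~\ref{C_{2g}[N]} factor-by-factor to distribute $\alpha$ and $\beta$; (v) for $\alpha=1$ with $mn\mid w$, use the extra divisibility to split off one $C_{2m}$-factor cheaply (e.g.\ a single $C_{2m}[n]$-factor handled by Corollary~\ref{C_{2m}-factors} inside one part-block, with the rest handled by an all-$C_{2n}$ factorization via Liu or Lemma~\ref{C_{2n}-factors}); (vi) handle the boundary case $t=3$, $w=n$ directly or note it is vacuous under the stated hypotheses. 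The main obstacle I anticipate is step (ii)/(iv): ensuring that $K_t[w]$ really does admit a $C_{2m/g}[n]$-factorization (not merely a $C_{2m'n}$-factorization) — one must argue that a resolvable $C_{2m'n}$-decomposition of $K_t[w]$, after grouping the $t$ parts into blocks of $n/\gcd(\cdot)$ parts or re-indexing $\mathbb{Z}_w$-coordinates, literally presents each factor as the lexicographic blow-up $C_{2m'}[n]$; getting the divisibility bookkeeping ($n\mid w$, $m'\mid w$, parity, and the excluded tuples of Theorem~\ref{liu}) exactly right, and checking none of Liu's four exceptional triples $(3,3,2),(3,6,2),(3,3,6),(6,2,6)$ intrude, is where the real care is needed.
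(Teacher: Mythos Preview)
Your overall strategy matches the paper's proof: obtain a $C_{2m/g}[n]$-factorization of $K_t[w]$ via Liu's theorem and a blow-up, then apply Lemma~\ref{Main Lemma}; handle $\beta=3$, $g>1$ separately through Theorem~\ref{C_{2g}[N]}; and handle $\alpha=1$, $mn\mid w$ separately. The uncertainties you flag all dissolve cleanly, and your step~(v) is slightly off.

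For step~(ii), do not pass through $C_{2m'n}$-factors and ``re-coordinatize''; simply write $K_t[w]=\big(K_t[w/n]\big)[n]$ and apply Theorem~\ref{liu} directly to $K_t[w/n]$ with $\ell=2m'$. Since $m$, $n$ both divide $w$ and $g=\gcd(m,n)$, one has $m'n'g\mid w$, so $w=2^x m' n s$ for some $x\geq 1$ and odd $s\geq 1$; hence $w/n=2^x m' s$ is even and divisible by $2m'$. Liu's exceptions require $\ell=3$ or $t=2$, neither of which occurs (as $m'$ is odd and $>1$, so $2m'\geq 6$). Blowing up by $n$ turns each $C_{2m'}$-factor of $K_t[w/n]$ into a $C_{2m'}[n]$-factor of $K_t[w]$. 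The same arithmetic gives $w\geq 2m'n\geq 6n$, so your boundary case $t=3$, $w=n$ is vacuous and the hypothesis $2(\alpha+\beta)>2n$ of Lemma~\ref{Main Lemma} is automatic.

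For step~(v), the paper does not ``split off one $C_{2m}$-factor cheaply''; it re-applies Lemma~\ref{Main Lemma} with $g=1$. When $mn\mid w$ one has $m\mid w/n$, so Theorem~\ref{liu} gives a $C_{2m}$-factorization of $K_t[w/n]$, hence a $C_{2m}[n]$-factorization of $K_t[w]$; Lemma~\ref{Main Lemma} with $g=1$ then applies and has no $\alpha=1$ exclusion. For step~(iv), your plan is exactly the paper's: Liu gives a $C_{2g}$-factorization of $K_t[w/(m'n')]$, blow up to a $C_{2g}[m'n']$-factorization of $K_t[w]$, and fill one factor with $\hwp(C_{2g}[m'n'];2m,2n;m'n'-3,3)$ and the rest with $\hwp(C_{2g}[m'n'];2m,2n;m'n',0)$ via Theorem~\ref{C_{2g}[N]}.
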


\begin{proof}
Let $g=\gcd(m,n)$, $m=m'g$ and $n=n'g$ for some odd positive integers $m', n'$.
Since $m$ and $n$ are divisors of $w$, $mn|wg$; hence $w=2^xm'n'gs=2^xm'ns$ for some $x \geq 1$ and odd $s>0$. 

We first suppose $\beta \neq 3$.  By Theorem~\ref{liu},
there exists a $C_{2m/g}$-factorization of $K_t[w/n]$, and hence $K_t[w]$ admits a $C_{2m/g}[n]$-factorization.
Note that $w>2n$ since $m'>1$, and hence $2(\alpha+\beta)=(t-1)w>2n$. The result now follows by Lemma~\ref{Main Lemma}, except if 
$\alpha=1$ and  $g>1$.  Note that if $mn\nmid w$, then $g>1$. Therefore, it is left to show that there is a solution to
$\hwp(K_t[w];2m,2n;\alpha,\beta)$ when $\alpha=1$ and $mn\mid w$. In this case,
we use Theorem~\ref{liu} to find a $C_{2m}$-factorization of $K_t[w/n]$, blow up by $n$ to obtain a $C_{2m}[n]$-factorization of $K_{t}[w]$ and again apply Lemma~\ref{Main Lemma}.

If $\beta=3$, then we note that $K_t[w/(m'n')]$ admits a $C_{2g}$-factorization by Theorem~\ref{liu}, and hence $K_t[w]$ admits a $C_{2g}[m'n']$-factorization.  We fill one $C_{2g}[m'n']$-factor with a solution to $\hwp(C_{2g}[m'n'];2m,2n;m'n'-3,3)$ and the rest with a solution to $\hwp(C_{2g}[m'n'];2m,2n;m'n',0)$, both of which exist by Theorem~\ref{C_{2g}[N]} when $g>1$.
\end{proof}

\begin{theorem} \label{main result}
Let $v \equiv 2$ (mod 4), let $n > m \geq 3$ be odd integers and let $\alpha, \beta$ be odd positive integers.  There is a solution to $\hwp(v;2m,2n;\alpha,\beta)$ if and only if $m \mid v$, $n \mid v$ and $\alpha+\beta = \frac{v-2}{2}$, except possibly if 
$m\nmid n$ and at least one of the following holds:
\begin{enumerate}
\item $\beta =1$; \label{beta1}
\item $\beta=3$ and $\gcd(m,n)=1$; \label{beta3}
\item $\alpha=1$ and either $mn \nmid v$ or $v=2mn$; \label{alpha1}
\item $v=2mn/\gcd(m,n)$. \label{smallv}
\end{enumerate}
\end{theorem}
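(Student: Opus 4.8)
## Proof proposal

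The plan is to reduce $\hwp(v;2m,2n;\alpha,\beta)$ to instances of $\hwp(K_t[w];2m,2n;\alpha',\beta')$ (Theorem~\ref{multipartite}) and the known uniform result (Theorem~\ref{known uniform}), using a standard ``filling in the holes'' construction over a resolvable group-divisible-type design. First I would dispose of the trivial directions: the stated conditions are necessary by Theorem~\ref{Necessary} together with the parity obstruction that forces $v \equiv 2 \pmod 4$ when $\alpha,\beta$ are both odd. Also, if $m \mid n$ then a $C_{2m}$-factor is a refinement of a $C_{2n}$-factor, so Theorem~\ref{known uniform} (equivalently Theorem~\ref{refinement}) already gives a solution with no exceptions; hence we may assume $m \nmid n$ throughout, which in particular forces $n \geq 3m/\gcd(m,n) > 3$ and $v \geq 2\,\mathrm{lcm}(2m,2n) = 4mn/\gcd(m,n)$, so $v > 2mn/\gcd(m,n)$ except in the excepted case~\ref{smallv}.

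Next I would set $L = \mathrm{lcm}(2m,2n)$ and write $v = 2ab$ where the factorization is chosen so that $2m \mid w$ and $2n \mid w$ for the ``block size'' $w$ we use; concretely, since $m,n$ are odd and $v \equiv 2 \pmod 4$, write $v = 2v'$ with $v'$ odd, and the hypotheses $m \mid v$, $n \mid v$ give $\mathrm{lcm}(m,n) \mid v'$. Choose $w = 2 \cdot \mathrm{lcm}(m,n) \cdot d$ for a suitable divisor structure and $t = v/w$; we want $t \geq 3$ so that Theorem~\ref{multipartite} applies. The classical approach: take a resolvable transversal-type structure — use the existence of a Kirkman-type frame or of $K_t[w]$ sitting inside $K_v^*$ such that $K_v^*$ decomposes as a copy of $K_t[w]$ plus $t$ copies of $K_w^*$ on the parts. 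Then one distributes the $\alpha$ $C_{2m}$-factors and $\beta$ $C_{2n}$-factors between the ``between-parts'' graph $K_t[w]$ (which contributes $(t-1)w/2$ factors) and the $t$ ``within-part'' copies of $K_w^*$ (each contributing $(w-2)/2$ factors, all of which can be made uniform $C_{2m}$ or $C_{2n}$ by Theorem~\ref{known uniform}, since $w \equiv 0 \pmod 4$ kills the parity obstruction there). The bookkeeping is to split $\alpha = \alpha_0 + \sum_{i=1}^t \alpha_i$ with $\alpha_0 + \beta_0 = (t-1)w/2$ handled by $K_t[w]$ and each $\alpha_i + \beta_i = (w-2)/2$ handled within a part; since each $K_w^*$ piece imposes no parity constraint, we have freedom to arrange the residues so that the $K_t[w]$ instance lands on values $(\alpha_0,\beta_0)$ avoiding its exceptions (in particular $\beta_0 \geq 5$, or $\beta_0$ even, or $\beta_0=3$ only when $\gcd(m,n)>1$), pushing any small or awkward residual $\beta$ into a single within-part $K_w^*$ — except precisely when $\beta \in \{1,3\}$ globally (if $\beta=1$ it cannot be absorbed into a part while keeping things consistent; if $\beta=3$ and $\gcd(m,n)=1$ likewise), or when $\alpha=1$, giving exceptions~\ref{beta1}--\ref{alpha1}. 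When $\alpha=1$ and $mn \mid v$ one still uses the $\alpha=1$ branch of Theorem~\ref{multipartite} directly on a large enough $K_t[w]$, which requires $v \neq 2mn$ (so that $t \geq 3$ with $w$ a multiple of $mn$), matching the $v = 2mn$ clause in exception~\ref{alpha1}.

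The main obstacle I expect is the existence of the underlying combinatorial scaffolding with $t \geq 3$ and with $w$ having all the divisibility properties needed for both Theorem~\ref{multipartite} (which wants $m,n \mid w$, $w$ even, and for the $\beta=3$ case $g = \gcd(m,n) > 1$) and Theorem~\ref{known uniform} applied to each $K_w^*$. When $v$ is only a small multiple of $\mathrm{lcm}(2m,2n)$, forcing $t \geq 3$ may be impossible, and that is exactly where exception~\ref{smallv} ($v = 2mn/\gcd(m,n)$, i.e.\ $v = 2\,\mathrm{lcm}(m,n)/\gcd(m,n)^{?}$ — the minimal feasible order) and part of~\ref{alpha1} come from; I would need a separate direct argument (via the $C_{2m/g}[n]$-factorization results of Section~\ref{Section blown up cycle} and Lemma~\ref{Main Lemma}) for the next few orders $v \in \{4\,\mathrm{lcm}(m,n)/\gcd(m,n), \ldots\}$ where $t=3$ or $t=5$ with an appropriate $w$, and then let Theorem~\ref{multipartite} take over for all larger $v$. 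Checking that the finitely many ``small'' orders are covered either by Theorems~\ref{C_{2m}[n] odd alpha}--\ref{C_{2m}[n] even alpha} composed through Lemma~\ref{Main Lemma}, or fall into the listed exceptions, is the delicate case analysis at the heart of the proof.
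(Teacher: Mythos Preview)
Your overall strategy---decompose $K_v^* = tK_w^* \oplus K_t[w]$, apply Theorem~\ref{multipartite} to $K_t[w]$, and fill the parts---is exactly what the paper does. But two concrete errors would derail the execution as you have written it.

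First, you claim you can arrange $w \equiv 0 \pmod 4$ so that Theorem~\ref{known uniform} applies freely to each $K_w^*$. You cannot: since $v \equiv 2 \pmod 4$, any factorization $v = tw$ with $t \geq 3$ forces $w \equiv 2 \pmod 4$ and $t$ odd. So each $K_w^*$ is subject to the same parity obstruction as the original problem, and you do \emph{not} have the freedom to mix $C_{2m}$- and $C_{2n}$-factors inside the parts in arbitrary proportions. The paper's fix is simple: take $w = 2\,\mathrm{lcm}(m,n)=2gm'n'$ and make the $t$ copies of $K_w^*$ \emph{purely uniform}---all $C_{2m}$-factors or all $C_{2n}$-factors---via Theorem~\ref{uniform OP}. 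Thus the part-contribution is $(\alpha_0,\beta_0) \in \{(0,\gamma),(\gamma,0)\}$ with $\gamma = (w-2)/2$, chosen according to whether $\alpha \leq \gamma+1$; the remainder $(\alpha_1,\beta_1) = (\alpha,\beta) - (\alpha_0,\beta_0)$ goes on $K_t[w]$ via Theorem~\ref{multipartite}, and a one-line check gives $\alpha_1 \geq 2$ and $\beta_1 \geq \mathrm{lcm}(m,n) \geq 15$ in the first case, $\beta_1=\beta$ in the second.

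Second, your bookkeeping $\alpha = \alpha_0 + \sum_{i=1}^t \alpha_i$ is incorrect: a $2$-factor of $K_v^*$ coming from $tK_w^*$ is the union of one $2$-factor from \emph{each} part, so the parts collectively contribute only $(w-2)/2$ factors to the total, and all $t$ parts must carry the \emph{same} split if the resulting global factors are to be uniform. Combined with the previous point, this is precisely why the part-contribution is pinned to $\{(0,\gamma),(\gamma,0)\}$.

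Once these two points are corrected, the anticipated ``delicate case analysis for small $v$'' disappears entirely: excluding exception~\ref{smallv} gives $t = v/(2\,\mathrm{lcm}(m,n)) \geq 3$ immediately, and the argument runs in two short cases ($\alpha=1$ versus $\alpha\geq 2$) with no further subdivision.
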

\begin{proof}
If $m \mid n$, the existence of a solution to $\hwp(v;m,n;\alpha,\beta)$ follows from Theorem~\ref{known uniform}, so we henceforth assume that $m \nmid n$.

We first consider the case $\alpha=1$. By exception~\ref{alpha1}, we have that $v=2tmn$ for some $t\geq3$ odd, therefore
$K_v^* = tK_{2mn}^* \oplus K_t[2mn]$.
By Theorem~\ref{uniform OP}, we can factorize $tK_{2mn}^*$ into $(mn-1)$ $C_{2n}$-factors. We then fill 
$K_t[2mn]$ with a solution to $\hwp(K_t[2mn]; 2m, 2n; 1,\beta-mn+1)$, which exists by
Theorem~\ref{multipartite}.

Now suppose $\alpha\geq 2$.  Let $g=\gcd(m,n)$, and write $m=m'g$ and $n=n'g$.  Since $m$ and $n$ are divisors of $v$, we may write
$v=2tgm'n'$, where $t$ is odd, and by exception~\ref{smallv}, we have that $t\geq 3$.  Decompose $K_v^* = tK_{2gm'n'}^* \oplus K_t[2gm'n']$. Also, let $\gamma=m'n'g-1 = \lfloor \frac{2m'n'g-1}{2}\rfloor$ and define a pair $(\alpha_0,\beta_0)$ as follows:
\[
(\alpha_0,\beta_0) = \left\{ 
\begin{array}{ll} (0,\gamma), & \mbox{if $\alpha \leq \gamma + 1$},\\ 
                  (\gamma,0), & \mbox{otherwise}. 
\end{array}
\right.
\]
By Theorem~\ref{uniform OP}, there is a solution to $\hwp(2m'n'g;2m,2n;\alpha_0,\beta_0)$, so we can fill $tK_{2gm'n'}^*$ with 
$\alpha_0$ $C_{2m}$-factors and $\beta_0$ $C_{2n}$-factors.
Let $(\alpha_1,\beta_1) = (\alpha,\beta) - (\alpha_0,\beta_0)$.  It remains to show that $\hwp(K_t[2m'n'g];2m,2n;\alpha_1,\beta_1)$ has a solution.   Since $\alpha_1\geq 2$, the result then follows by Theorem~\ref{multipartite}. 
\end{proof}

Theorem~\ref{main result} improves the result of Theorem~\ref{known uniform} in the case that $v \equiv 2$ (mod 4) by allowing the possibility that $\alpha$ and $\beta$ are odd.  The case that $v \equiv 0$ (mod 4) is covered by Theorem~\ref{known uniform} except if $1 \in \{\alpha,\beta\}$.  In the case $\alpha=1$, we have the following.

\begin{lemma} \label{0(mod 4) and alpha=1}
Suppose $v \equiv 0$ (mod 4), and let $m$ and $n$ be odd integers with $3 \leq m < n$, and $mn \mid v$.  Then there is a solution to $\hwp(v;2m,2n;1,\frac{v-2}{2}-1)$.
\end{lemma}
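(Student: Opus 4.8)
The plan is to reduce everything to the lexicographic‑product results proved earlier. Since $v\equiv 0\pmod 4$ and $mn\mid v$ with $mn$ odd, in fact $4mn\mid v$, so write $v=4mns$; note also that $2m\mid v$, $2n\mid v$ and $\tfrac{v-2}{2}-1\geq 0$. If $m\mid n$, then a $C_{2m}$-factor is a (bipartite) refinement of a $C_{2n}$-factor and the statement is immediate from Theorem~\ref{known uniform} (or Theorem~\ref{refinement}), so I would assume $m\nmid n$ throughout.

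For $s\geq 2$ I would argue exactly as in the proof of Theorem~\ref{main result}: decompose $K_v^*=2s\,K_{2mn}^*\oplus K_{2s}[2mn]$, placing the removed $1$-factor inside the $2s$ copies of $K_{2mn}$; give each $K_{2mn}^*$ a $C_{2n}$-factorization via Theorem~\ref{uniform OP} (supplying $mn-1$ of the $C_{2n}$-factors); and feed $K_{2s}[2mn]$ to Theorem~\ref{multipartite}. The latter applies since $2s\geq 4\geq 3$, $2mn$ is even, $m$ and $n$ are odd divisors of $2mn$ with $m\nmid n$, $\alpha=1$, and $mn\mid 2mn$, so none of its exceptions is triggered; it yields the unique $C_{2m}$-factor together with $(2s-1)mn-1$ further $C_{2n}$-factors, and the totals come out to $1$ and $\tfrac{v-2}{2}-1$ as required.

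The real obstacle is $s=1$, i.e.\ $v=4mn$: here, after removing $2K_{2mn}^*$ (again yielding $mn-1$ $C_{2n}$-factors by Theorem~\ref{uniform OP}), the remaining graph is $K_{2mn,2mn}$, which is too small ($4mn<6mn$) to be an admissible $K_t[w]$ for Theorem~\ref{multipartite}. I would instead write $K_{2mn,2mn}=K_{m,m}[2n]$, use the classical fact that for odd $m$ the graph $K_{m,m}$ decomposes into $\tfrac{m-1}{2}$ Hamilton cycles and a perfect matching, and apply $[2n]$ to get $K_{2mn,2mn}=\big(\bigoplus_{(m-1)/2}C_{2m}[2n]\big)\oplus\big(\bigsqcup_{m}K_{2n,2n}\big)$. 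Since $n\geq 5$, each $K_{2n,2n}=K_2[2n]$ has a $C_{2n}$-factorization by Theorem~\ref{liu}, so the $m$ disjoint copies contribute $n$ $C_{2n}$-factors; it then suffices to realize, inside the $\tfrac{m-1}{2}$ copies of $C_{2m}[2n]$, exactly one $C_{2m}$-factor with all remaining factors of type $C_{2n}$. Concretely this reduces to two claims: (a) $\hwp(C_{2m}[2n];2m,2n;1,2n-1)$ has a solution, and (b) $C_{2m}[2n]$ has a $C_{2n}$-factorization — one copy then realizes (a) and the remaining $\tfrac{m-3}{2}$ copies realize (b). I expect the bookkeeping that forces exactly one $C_{2m}$-factor overall (rather than several) to be the delicate point of the write‑up.

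For (a), I would regard $C_{2m}[2n]$ as a Cayley graph on $\Z_{2m}\times\Z_{2n}$ with difference set $\{1\}\times\Z_{2n}$ and split $\Z_{2n}=\{0\}\cup(\text{nonzero even residues})\cup(\text{odd residues})$. The difference‑$0$ subgraph is plainly a $C_{2m}$-factor; edges of nonzero even difference preserve the parity of the $\Z_{2n}$-coordinate, so that subgraph is a disjoint union of two copies of $C_{2m}[n,\Z_n\setminus\{0\}]=C_{2m}[n,\pm\{1,\dots,\tfrac{n-1}{2}\}]$; and edges of odd difference preserve $x+y\bmod 2$, so that subgraph is a disjoint union of two copies of $C_{2m}[n]=C_{2m}[n,\pm\{0,1,\dots,\tfrac{n-1}{2}\}]$. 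Lemma~\ref{C_{2n}-factors} (with $w=1$, resp.\ $w=0$) gives $C_{2n}$-factorizations of both, accounting for $(n-1)+n=2n-1$ $C_{2n}$-factors, which proves (a). For (b) a different cut works better: label the consecutive vertex classes of $C_{2m}[2n]$ as $V_0,\dots,V_{2m-1}$; the edges between $V_i$ and $V_{i+1}$ with $i$ even form a spanning disjoint union of $m$ copies of $K_{2n,2n}$, as do those with $i$ odd, and these two spanning subgraphs partition the edge set. Since $n\geq 5$, each $K_{2n,2n}$ is $C_{2n}$-factorizable by Theorem~\ref{liu}, hence so is $C_{2m}[2n]$, proving (b) and completing the argument.
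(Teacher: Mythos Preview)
Your proof is correct, but it takes a markedly different and more laborious route than the paper's. The paper avoids any case split: writing $v=tmn$ with $t\equiv 0\pmod 4$, it decomposes $K_v^*=mK_{tn}^*\oplus K_m[tn]$, factors $mK_{tn}^*$ into $C_{2n}$-factors via Theorem~\ref{uniform OP}, obtains a $C_{2m}[n]$-factorization of $K_m[tn]$ by blowing up a $C_{2m}$-factorization of $K_m[t]$ (Theorem~\ref{liu}), and then fills each $C_{2m}[n]$-factor using Theorems~\ref{C_{2m}[n] odd alpha} and~\ref{C_{2m}[n] even alpha}, with exactly one factor receiving the $(1,n-1)$ pattern. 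This handles $v=4mn$ and all larger $v$ uniformly in a couple of lines.

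By contrast, you split on $s$ and, for $s\geq 2$, invoke the heavier Theorem~\ref{multipartite}; for $s=1$ you build an ad hoc construction via $K_{m,m}[2n]$, a Hamilton decomposition of $K_{m,m}$, parity-splittings of $C_{2m}[2n]$, and several appeals to Theorem~\ref{liu} and Lemma~\ref{C_{2n}-factors}. All of this is sound---your parity arguments in (a) and the even/odd column-pair argument in (b) are valid, and the counts check out---but the paper's single decomposition sidesteps the $s=1$ difficulty entirely, since $K_m[tn]$ with $t\geq 4$ is already large enough for Liu's theorem to produce the needed $C_{2m}$-factors without any bipartite leftovers.
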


\begin{proof}
Write $v=tmn$, where $t\equiv 0$ (mod $4$).  Decompose $K_v^*=mK_{tn}^* \oplus K_{m}[tn]$.  By Theorem~\ref{uniform OP}, there is a $C_{2n}$-factorization of $mK_{tn}^*$.  By Theorem~\ref{liu}, there is a $C_{2m}$-factorization of $K_m[t]$, and hence a $C_{2m}[n]$-factorization of $K_m[tn]$.  By Theorem~\ref{C_{2m}[n] odd alpha}, we can fill one $C_{2m}[n]$-factor with a solution of $\hwp(C_{2m}[n];2m,2n;1,n-1)$, and by Theorem~\ref{C_{2m}[n] even alpha} we can the remainder with solutions of $\hwp(C_{2m}[n];2m,2n;0,n)$.
\end{proof}

Combining the results of Theorems~\ref{known uniform} and \ref{main result} together with Lemma~\ref{0(mod 4) and alpha=1} proves the following.
\begin{theorem} \label{combined theorem}
Let $m$ and $n$ be integers with $n>m \geq 2$, and let $\alpha$ and $\beta$ be positive integers.  
\begin{enumerate}
\item If $m \mid n$, then there is a solution to $\hwp(v;2m,2n;\alpha,\beta)$ if and only if $2n \mid v$ and $\alpha+\beta=\frac{v-2}{2}$.  
\item If $m \nmid n$, then there is a solution to $\hwp(v;2m,2n;\alpha,\beta)$ if and only if $2m$ and $2n$ are both divisors of $v$ and $\alpha+\beta=\frac{v-2}{2}$, except possibly when at least one of the following holds:
\begin{enumerate}
\item $\beta=1$;
\item $\beta=3$,  $v \equiv 2$ (mod 4) and $\gcd(m,n)=1$;
\item $\alpha=1$ and at least one of $m$ or $n$ is even;
\item $\alpha=1$, $m$ and $n$ are odd, and either $mn \nmid v$ or $v=2mn$;
\item $v=2mn/\gcd(m,n) \equiv 2$ (mod 4), and $\alpha$ and $\beta$ are odd.
\end{enumerate}
\end{enumerate}
\end{theorem}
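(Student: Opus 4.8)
The plan is to assemble Theorem~\ref{combined theorem} directly from the three results already in hand: Theorem~\ref{known uniform}, which handles all cases except $1 \in \{\alpha,\beta\}$; Theorem~\ref{main result}, which handles $v \equiv 2 \pmod 4$ with $\alpha,\beta$ odd (in particular covering many $\alpha=1$ or $\beta=1$ cases); and Lemma~\ref{0(mod 4) and alpha=1}, which handles $\alpha=1$, $v \equiv 0 \pmod 4$, $m,n$ odd, $mn \mid v$. The first observation is that the necessary conditions ($2m,2n$ both divide $v$ when $m \nmid n$, or just $2n \mid v$ when $m \mid n$, together with $\alpha+\beta = \frac{v-2}{2}$) are exactly those of Theorem~\ref{Necessary} specialized to $K_v^*$, which is $(v-2)$-regular; so I only need to prove sufficiency.

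First I would dispose of Part 1. If $m \mid n$, then a $C_{2m}$-factor is a refinement of a $C_{2n}$-factor, so Theorem~\ref{known uniform} (equivalently Theorem~\ref{refinement}) gives a solution whenever $2n \mid v$ and $\alpha+\beta = \frac{v-2}{2}$, with no exceptions. Note $2m \mid 2n \mid v$ is automatic here. For Part 2, assume $m \nmid n$. By Theorem~\ref{known uniform}, a solution exists unless $1 \in \{\alpha,\beta\}$, so it suffices to treat those two borderline cases and record which sub-exceptions survive.

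Next, the case $\beta = 1$: this is listed as a permanent possible exception (2a) in the statement, so nothing needs to be done — it is simply carried over from Theorem~\ref{known uniform}. Then the case $\alpha = 1$. Split on the parity of $v$. If $v \equiv 2 \pmod 4$: if $m$ or $n$ is even we land in exception (2c); if $m,n$ are both odd, then $\beta = \frac{v-2}{2}-1$; since $v \equiv 2 \pmod 4$, $\frac{v-2}{2}$ is even, so $\beta$ is odd, and Theorem~\ref{main result} applies (with the roles so that the "$\alpha$" there is our $1$), yielding a solution except in its exceptions, which translate precisely to (2b) [$\beta=3$, $\gcd(m,n)=1$ — but $\beta = \frac{v-2}{2}-1 = 3$ forces $v=10$, a genuine small case one checks or absorbs into (2b)], (2d) [$mn \nmid v$ or $v = 2mn$], and (2e) [$v = 2mn/\gcd(m,n)$]; I would need to verify these match the labelling in the statement and that $\beta=1$ here would force $v=6 < 2n$, impossible. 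If $v \equiv 0 \pmod 4$: again if $m$ or $n$ is even we are in (2c); if $m,n$ are odd and $mn \mid v$, Lemma~\ref{0(mod 4) and alpha=1} gives a solution directly; if $m,n$ are odd but $mn \nmid v$ we are in (2d). Finally, the remaining unresolved possibility $\beta=3$ with $v \equiv 2 \pmod 4$ and $\gcd(m,n)=1$ is exactly (2b), inherited from Theorem~\ref{main result}, while for $v \equiv 0 \pmod 4$ the case $\beta = 3$ is already covered by Theorem~\ref{known uniform} (since $3 \neq 1$).

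The step I expect to require the most care is not any construction — everything is quoted — but the \emph{bookkeeping}: checking that every combination of ($m \mid n$ or not), (parity of $v$), ($\alpha=1$, $\beta=1$, or neither), ($m,n$ parities), and ($mn \mid v$ or not) is routed to exactly one of the three cited results or to exactly one of the five listed exceptions, with no gap and no spurious surviving case. In particular I must confirm that the "except possibly'' clauses of Theorem~\ref{main result} (its conditions \ref{beta1}–\ref{smallv}) map correctly onto (2a), (2b), (2d), (2e) after the relabelling, and that no genuinely new exception is introduced by forcing $\beta = \frac{v-2}{2}-1$ to equal $1$ or $3$ in the $\alpha=1$ subcase; these are small-$v$ sanity checks rather than real obstacles.
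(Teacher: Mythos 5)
Your overall strategy --- assembling the statement from Theorems~\ref{known uniform} and \ref{main result} together with Lemma~\ref{0(mod 4) and alpha=1} --- is exactly the paper's, but your reduction step contains a genuine gap. You assert that Theorem~\ref{known uniform} ``handles all cases except $1\in\{\alpha,\beta\}$'', and on that basis you restrict the remaining analysis to the two borderline cases $\alpha=1$ and $\beta=1$. That is a misreading: the second exception of Theorem~\ref{known uniform} is $v\equiv 2\pmod{4}$, $m\nmid n$, with $\alpha$ and $\beta$ \emph{both odd}, not merely $1\in\{\alpha,\beta\}$. Since $\alpha+\beta=\frac{v-2}{2}$ is even when $v\equiv 2\pmod{4}$, the two parameters always have the same parity there, so Theorem~\ref{known uniform} leaves open the entire odd--odd half of the spectrum for such $v$; for instance $m=3$, $n=5$, $v=90$, $(\alpha,\beta)=(21,23)$ is claimed by part 2 of the statement but is routed nowhere in your proposal, since you invoke Theorem~\ref{main result} only inside the $\alpha=1$ subcase. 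This odd--odd regime with $\min\{\alpha,\beta\}\geq 3$ is precisely the main new content being packaged by the theorem, so the omission is not a small bookkeeping slip.

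The repair is immediate and is what the paper intends: for $v\equiv 2\pmod{4}$ with $\alpha,\beta$ both odd, observe that $2m\mid v$ and $2n\mid v$ force $m$ and $n$ to be odd (hence $m\geq 3$), and apply Theorem~\ref{main result} in full generality, not just at $\alpha=1$; its exceptions \ref{beta1}--\ref{smallv} translate exactly into exceptions (a), (b), (d), (e) of part 2. With that correction, the rest of your routing is sound and matches the paper's one-line combination: part 1 via Theorem~\ref{known uniform} (no exceptions when $m\mid n$); $\beta=1$ carried as exception (a); $\alpha=1$ with $v\equiv 0\pmod{4}$ via Lemma~\ref{0(mod 4) and alpha=1} when $m,n$ are odd and $mn\mid v$, and otherwise absorbed into exceptions (c) or (d); and for $v\equiv 0\pmod{4}$ all cases with $\alpha,\beta\geq 2$ via Theorem~\ref{known uniform}.
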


\section{Acknowledgements}

The first and second authors gratefully acknowledge support from NSERC Discovery grants RGPIN-435898-2013 and RGPIN-2016-04178, respectively.


\begin{thebibliography}{99}

\bibitem{ABHMS}  B.\ Alspach, D.\ Bryant, D.\ Horsley, B.\ Maenhaut and V.\ Scharaschkin. On factorisations of complete graphs into circulants and the Oberwolfach problem. {\em Ars Math.\ Contemp.} {\bf 11} (2016), 157--173.

\bibitem{ASSW}
B.\ Alspach, P.J.\ Schellenberg, D.R.\ Stinson and D.\ Wagner. The Oberwolfach problem and factors of uniform odd length cycles. {\em J. Combin. Theory Ser. A} {\bf 52} (1989), 20--43.


\bibitem{AsplundEtAl} J.\ Asplund, D.\ Kamin, M.\ Keranen, A.\ Pastine and S.\ \"{O}zkan.  On the Hamilton-Waterloo problem with triangle factors and $C_{3x}$-factors. {\em Australas.\ J.\ Combin.}  {\bf 64} (2016), 458--474.  


\bibitem{Bermond}  J.-C.\ Bermond, O.\ Favaron and M.\ Mah\'{e}o.  Hamiltonian decomposition of Cayley graphs of degree 4.  
{\em J.\ Combin.\ Theory Ser.\ B} {\bf 46} (1989), 142--153.

\bibitem{BonviciniBuratti}  S.\ Bonvicini and M.\ Buratti.  Octahedral, dicyclic and special linear solutions of some unsolved Hamilton-Waterloo problems.  
{\em Ars Math. Contemp.} {\bf 14} (2018), 1--14.

\bibitem{BryantDanziger} D.\ Bryant and P.\ Danziger.  On bipartite 2-factorizations of $K_n-I$ and the Oberwolfach problem.  {\em J.\ Graph Theory} {\bf 68} (2011), 22--37.

\bibitem{BryantDanzigerDean} D.\ Bryant, P.\ Danziger, and M.\ Dean.  On the Hamilton-Waterloo problem for bipartite 2-factors.  {\em J.\ Combin.\ Des.} {\bf 21} (2013), 60--80.


\bibitem{Bryant Schar 09} D.\ Bryant and V.\ Scharaschkin. Complete solutions to the Oberwolfach problem for an infinite set of orders.  {\em J.\ Combin.\ Theory Ser.\ B } {\bf 99} (2009), 904--918.

\bibitem{BurattiDanziger}  M.\ Buratti and P.\ Danziger.  A cyclic solution for an infinite class of Hamilton-Waterloo problems.  {\em Graphs Combin.} {\bf 32} (2016), 521--531.

\bibitem{BDT3} A.C.\ Burgess, P.\ Danziger and T.\ Traetta. On the Hamilton-Waterloo Problem with cycle lengths of distinct parities. {\em Discrete Math.} {\bf 341} (2018), 1631--1644.

\bibitem{BDT2} A.C.\ Burgess, P.\ Danziger and T.\ Traetta. On the Hamilton-Waterloo Problem with odd cycle lengths. {\em J.\ Combin.\ Des.} {\bf 26} (2018), 51--83.

\bibitem{BDT1} A.C.\ Burgess, P.\ Danziger and T.\ Traetta. On the Hamilton–Waterloo Problem with odd orders. {\em J.\ Combin.\ Des.} {\bf 25} (2017), 258--287.

\bibitem{GOP} A.C.\ Burgess, P.\ Danziger and T.\ Traetta. On the generalized Oberwolfach Problem, submitted.


\bibitem{DanzigerQuattrocchiStevens} P.\ Danziger, G.\ Quattrocchi and B.\ Stevens.  The Hamilton-Waterloo problem for cycle sizes 3 and 4.  {\em J.\ Combin.\ Des.} {\bf 17} (2009), 342--352.

\bibitem{DukesLing} P.\ Dukes and A.C.H.\ Ling.  Asymptotic existence of resolvable graph designs.  {\em Canad.\ Math.\ Bull.} {\bf 50} (2007), 504--518.	

\bibitem{GJKKO} S.\ Glock, F.\ Joos, J.\ Kim, D.\ K\"{u}hn and D.\ Osthus.  Resolution of the Oberwolfach Problem.  Preprint, https://arxiv.org/pdf/1806.04644.pdf.

\bibitem{Haggkvist 85} R.\ H\"{a}ggkvist. A lemma on cycle decompositions. {\em North-Holland Math. Stud.} {\bf 115} (1985),  227-232.

\bibitem{Hoffman Schellenberg 91} D.G.\ Hoffman and P.J.\ Schellenberg. The existence of $C_k$-factorizations of $K_{2n} - F$. {\em Discrete Math.} {\bf 97} (1991), 243--250.

\bibitem{KeranenOzkan}  M.S.\ Keranen and S.\ \"{O}zkan.  The Hamilton-Waterloo problem with 4-cycles and a single factor of $n$-cycles.  {\em Graphs Combin.} {\bf 29} (2013), 1827--1837.

\bibitem{KeranenPastine} M.\ Keranen and A.\ Pastine.  A generalization of the Hamilton-Waterloo Problem on complete equipartite graphs.  {\em J.\ Combin.\ Des.} {\bf 25} (2017), 431--468.

\bibitem{KeranenPastine2} M.\ Keranen and A.\ Pastine.  On the Hamilton-Waterloo problem: the case of two cycles sizes of different parity.  Preprint, https://arxiv.org/pdf/1712.09291.pdf.

\bibitem{LeiShen}  H.\ Lei and H.\ Shen.  The Hamilton-Waterloo problem for Hamilton cycles and triangle-factors.  {\em J.\ Combin.\ Des.} {\bf 20} (2012), 305--316.

\bibitem{Liu00} J.\ Liu.  A generalization of the Oberwolfach problem and $C_t$-factorizations of complete equipartite graphs.  {\em J.\ Combin.\ Des.} {\bf 8} (2000), 42--49.

\bibitem{Liu03} J.\ Liu. The equipartite Oberwolfach problem with uniform tables. {\em J. Combin.\ Theory, Ser.\ A}, {\bf 101} (2003), 20--34.

\bibitem{MerolaTraetta}  F.\ Merola and T.\ Traetta.  Infinitely many cyclic solutions to the Hamilton-Waterloo problem with odd length cycles.  {\em Discrete Math.} {\bf 339} (2016), 2267--2283.

\bibitem{OdabasiOzkan}  U.\ Odaba\c{s}{\i} and S.\ \"{O}zkan.  The Hamilton-Waterloo problem with $C_4$ and $C_m$ factors.  {\em Discrete Math.} {\bf 339} (2016), 263--269.


\bibitem{Traetta 13} T.\ Traetta. A complete solution to the two-table Oberwolfach problems. {\em J.\ Combin.\ Theory Ser.\ A} {\bf 120} (2013), 984--997.

\bibitem{WangCao}  L.\ Wang and H.\ Cao.  A note on the Hamilton-Waterloo problem with $C_8$-factors and $C_m$-factors.  {\em Discrete Math.} {\bf 341} (2018), 67--73.

\bibitem{WangChenCao}  L.\ Wang, F.\ Chen and H.\ Cao.  The Hamilton-Waterloo problem for $C_3$-factors and $C_n$-factors.  {\em J.\ Combin.\ Des.} {\bf 25} (2017), 385--418.

\bibitem{WangLuCao}  L.\ Wang, S.\ Su and H.\ Cao.  Further results on almost resolvable cycle systems and the Hamilton-Waterloo problem.  {\em J.\ Combin.\ Des.} {\bf 26} (2018), 27--47.



\end{thebibliography}
\end{document}